\numberwithin{equation}{section}
\newcommand{\R}{\mathbb{R}}
\newcommand{\LL}{\mathcal{L}}
\newcommand{\pp}{\partial}
\newcommand{\e}{\textrm{e}}
\newcommand{\y}{\textrm{y}}
\newcommand{\D}{\delta}
\newcommand{\U}{\mathcal{U}}
\newcommand{\la}{\lambda}
\newcommand{\ep}{\varepsilon}
\definecolor{crimson}{rgb}{0.86,0.05,0.24}
\definecolor{airforceblue}{rgb}{0,0.15,0.8}
\newcommand{\rdiv}{{\rm div}}
\newcommand{\rdist}{{\rm dist}}
\newtheorem{lemma}{Lemma}[section]
\newtheorem{prop}{Proposition}[section]
\newtheorem{theo}{Theorem}[section]
\title{an existence result for anisotropic quasilinear problems}
\author[O. Agudelo]{Oscar Agudelo}
\address{\noindent O. Agudelo - NTIS, Department of Mathematics,
Z\'{a}pado\v{c}esk\'{a} Univerzita v Plzni, Plze\v{n}, Czech Republic.}
\email{oiagudel@ntis.zcu.cz}
\author[P. Dr\'{a}bek]{Pavel Dr\'{a}bek}
\address{\noindent P. Dr\'{a}bek - ZCU, Department of Mathematics,
Z\'{a}pado\v{c}esk\'{a} Univerzita v Plzni, Plze\v{n}, Czech Republic.}
\email{pdrabek@kma.zcu.cz}
\begin{document}
\maketitle


\begin{abstract}
We study existence of solutions for a boundary degenerate (or singular) quasilinear equation in a smooth bounded domain under Dirichlet boundary conditions. We consider a weighted $p-${L}aplacian operator with a coefficient that is {locally bounded  inside the domain and satisfying certain additional integrability assumptions}. Our main result applies for boundary value problems involving continuous non-linearities having no growth restriction, but provided the existence of a sub and a supersolution is guaranteed. As an application, we present an existence result for a boundary value pro\-blem with a non-linearity $f(u)$ satisfying $f(0) \leq 0$ and having $(p-1)-$sublinear growth at infinity.
\end{abstract}

\medskip

{\bf Key words: }$p-${L}aplacian, quasilinear eigenvalue problems, subsolution and supersolution, weighted Sobolev spaces, Kato estimates, $(p-1)-$sublinearity.

\medskip

{\bf MSC Classification:} 35A01, 35J25, 35J60, 35J62, 35J70, 35J92.

\section{Introduction}\label{introduction}
In this work we study the degenerate (or singular) boundary value problem ({\it BVP})
\begin{equation}\label{MODELDEGQUASILINEAR}
\left\{
\begin{aligned}
-\rdiv\left(a(x)|\nabla u|^{p-2}\nabla u\right)& =  F(x,u) \quad & \hbox{in} & \quad \,\,\Omega\\
u&=0 \quad & \hbox{on} & \quad  \pp \Omega,
\end{aligned}
\right.
\end{equation}
where {${N} \geq 2$}, {$p\in (1,N)$} and $\Omega\subset \R^N$ is a smooth bounded domain.


\medskip
In order to formulate our assumptions on the function $a:\Omega \to \R$, we  consider first $\rho_0>0$ small so that in the set $
\Omega_{\rho_0}:=\{x\in \Omega\,:\, 0<\rdist(x,\pp \Omega)< \rho_0\}$
the distance function 
\begin{equation}\label{def:distfunc}
\Omega_{\rho_0}\ni x \mapsto{\rm dist}(x, \partial \Omega)
\end{equation}
{satisfies that given any $x\in \Omega_{\rho_0}$, there exists a unique $z_x \in \partial \Omega$  such that ${\rm dist}(x,\Omega)={\rm dist}(x,z_x)$ (see also subsection \ref{2.3}).} We consider also a {\it positive function} ${\sf a}:(0,\rho_0) \to (0,\infty)$ and a fixed positive number $s$ such that 
\begin{equation}\label{hyp:rma}
{\sf a}\in L^1(0,\rho_0)\cap L^{\infty}_{loc}(0,\rho_0), \quad {\sf a}^{-1}\in L^s(0,\rho_0)\quad  \hbox{and} \quad s>\max \left(\frac{N}{p}, \frac{1}{p-1}\right).
\end{equation}

{I}n particular,  ${\sf a}^{-\frac{1}{p-1}}\in L^{1}(0,\rho_0)$.

\medskip
The assumptions for the function $a$ are the following: 

\medskip
{\it (A1)} $a\in L^{\infty}_{loc}(\Omega)$ and for any open set $D$ with $\overline{D}\subset \Omega$, $\inf \limits_{{	D}}a>0$;

\medskip
{\it (A2)} for every $x \in \Omega_{\rho_0}$, 
\begin{equation*}a(x)={\sf a}\big(\rdist(x,\pp \Omega)\big).
\end{equation*}

{Assumptions {\it (A1)} and {\it (A2)} imply that the differential ope\-rator in \eqref{MODELDEGQUASILINEAR} is uniformly elliptic on any subdomain of $\Omega$, but not necessarily near the boundary of the domain.}

\medskip
{These hypotheses also yield that $a\in L^1(\Omega)$ and $a^{-1}\in L^s(\Omega)$ (see Section \ref{2.1}), which will allow us to put the equation \eqref{MODELDEGQUASILINEAR} {into} an appropriate functional analytic setting.

\medskip
To formulate the assumptions on the function $F:\Omega \times \R \to \R$, we first let $s>0$ be as in \eqref{hyp:rma} and denote 
\begin{equation}\label{sobolevexponents}
p_s\,:=\,\frac{p\,s}{s+1}, 
\quad \quad p_s^*:=\,\frac{N p_s}{N-p_s}\
=\frac{Nps}{N(s+1) - ps}.
\end{equation}

Using that $s> \frac{N}{p}>1$, we find that
\begin{equation}\label{exponentsinequalities}
p_s\,<\,p\,<\,\,p_s^*\,\,<\,\,\frac{Np}{N-p}.
\end{equation}

Next, fix $q\in [p,p_s^*)$ {and set} $\frac{q}{q-p}:=\infty$, if $q=p$. Consider also a measurable function $b$ satisfying the following:

\medskip
{\it (B1)} ${b\in L^{\frac{q}{q-p}}(\Omega)}$;

\medskip
{\it (B2)} there exist constants $0<c_1<c_2$ such that 
\begin{equation*}\label{behaviorb(x)}
c_1 \leq \liminf \limits_{{\rm dist}(x, \pp \Omega) \to 0}\,a^{\frac{1}{p-1}}(x)b(x) \leq  \limsup \limits_{{\rm dist}(x, \pp \Omega) \to 0}\,a^{\frac{1}{p-1}}(x)b(x) \leq c_2.
\end{equation*}

In particular, $b \neq 0$ and $b \geq 0$ a.e. in $\Omega$.

\medskip

\medskip
As for the function $F:\Omega\times \R \to \R$, we assume it is a Caratheodory function satisfying the growth condition:

\medskip
for every $M>0$, there exists $C_M>0$ such that for every $\zeta\in\R$ with $|\zeta|\leq M$ and for a.e. $x\in \Omega$,
\begin{equation}\label{growthF(x,u)0}
|F(x,\zeta)|\leq C_M b(x).
\end{equation}

\medskip

We use {\it {w}eighted Sobolev spaces} to study the BVP \eqref{MODELDEGQUASILINEAR}. The Sobolev space $W^{1,p}(\Omega,a)$ is defined as the class of functions $v\in W^{1,p}_{loc}(\Omega)$ such that
$$
\|v\|_{W^{1,p}(\Omega,a)}:= \left(\int_{\Omega}|v|^p dx + \int_{\Omega}a(x)|\nabla v|^pdx\right)^{\frac{1}{p}}<\infty.
$$

The Sobolev space $W^{1,p}_0(\Omega,a)$ is defined as the closure of $C^{\infty}_c(\Omega)$ with respect to the norm $\|\cdot\|_{W^{1,p}(\Omega,a)}$.

\medskip
A function $u\in W^{1,p}_0(\Omega,a)$ is called a {\it solution} of \eqref{MODELDEGQUASILINEAR} if $F(\cdot,u(\cdot))\in L^{\frac{p}{p-1}}(\Omega)$ and
\begin{equation}\label{weakaslnMODELDEGQUASILINEAR}
{\forall \varphi \in W^{1,p}_0(\Omega,a)}: \quad \int_{\Omega} a(x)|\nabla u|^{p-2}\nabla u \cdot \nabla \varphi dx = \int_{\Omega}F(x,u) \varphi dx.
\end{equation}

\medskip

A function $u\in W^{1,p}(\Omega,a)\cap C(\overline{\Omega})$ is called a {\it subsolution (supersolution  respectively)} of \eqref{MODELDEGQUASILINEAR} if $F(\cdot,u(\cdot))\in L^{\frac{p}{p-1}}(\Omega)$, for every $\varphi \in W^{1,p}_0(\Omega,a)$ with $\varphi \geq 0$ a.e. in $\Omega$,
\begin{equation}\label{weakaslnMODELDEGQUASILINEAR2}
\int_{\Omega} a(x)|\nabla u|^{p-2}\nabla u \cdot \nabla \varphi dx  \leq (\geq)\int_{\Omega} F(x,u) \varphi  dx
\end{equation}
and $\max \limits_{x\in \pp \Omega}u(x) \leq 0\,\,$ $\left(\min \limits_{x\in\pp \Omega}u(x) \geq0\right)$.

\medskip

Our first main result reads as follows.

 \begin{theo}\label{multiplicitytheorem}
Let {\it (A1)}-{\it (A2)} and {\it (B1)}-{\it (B2)} hold and let $F$ satisfy \eqref{growthF(x,u)0}. Assume that $\underline{u}$ is a subsolution and $\overline{U}$ is a supersolution to \eqref{MODELDEGQUASILINEAR} such that $\underline{u} \leq \overline{U}$ in $\Omega$. Then the BVP \eqref{MODELDEGQUASILINEAR} has a solution $u \in C(\overline{\Omega})$ such that $\underline{u}\leq u \leq \overline{U}$ in $\Omega$.
\end{theo}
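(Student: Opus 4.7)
The plan is to reduce to a problem with a globally bounded right-hand side by truncating $F$ between $\underline u$ and $\overline U$, solve the truncated equation by Schauder's fixed-point theorem, and then invoke the weak comparison principle to show that the obtained solution actually lies in the order interval $[\underline u,\overline U]$, so that the truncation is inactive and $u$ solves the original problem.

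Since $\underline u,\overline U\in C(\overline{\Omega})$, the number $M:=\max\{\|\underline u\|_{\infty},\|\overline U\|_{\infty}\}$ is finite. Setting
\[
T(x,t):=\max\bigl\{\underline u(x),\min\{t,\overline U(x)\}\bigr\},\qquad \widetilde F(x,t):=F(x,T(x,t)),
\]
the growth condition \eqref{growthF(x,u)0} yields $|\widetilde F(x,t)|\leq C_M b(x)$ for all $t\in\R$ and a.e.\ $x\in\Omega$. By (B1) and the compact embedding $W^{1,p}_0(\Omega,a)\hookrightarrow L^q(\Omega)$ established in Section~\ref{2.1}, the Nemytskii map $u\mapsto \widetilde F(\cdot,u)$ is continuous and compact from $W^{1,p}_0(\Omega,a)$ into its dual, with uniformly bounded range. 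The operator $Au:=-\rdiv(a(x)|\nabla u|^{p-2}\nabla u)$ is a continuous, coercive, strictly monotone operator of type $(S_+)$, hence a homeomorphism from $W^{1,p}_0(\Omega,a)$ onto its dual. Writing the truncated problem as $u=A^{-1}\widetilde F(\cdot,u)$ and applying Schauder's theorem in a sufficiently large ball, where the uniform bound $|\widetilde F|\leq C_M b$ supplies the needed a priori estimate, produces a solution $u_0\in W^{1,p}_0(\Omega,a)$ of the truncated BVP.

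To remove the truncation, test the equation for $u_0$ and the supersolution inequality \eqref{weakaslnMODELDEGQUASILINEAR2} for $\overline U$ with $\varphi:=(u_0-\overline U)^+$. Since $u_0$ has vanishing trace on $\pp\Omega$ and $\overline U\geq 0$ on $\pp\Omega$, we have $\varphi\in W^{1,p}_0(\Omega,a)$. On $\{u_0>\overline U\}$ one has $T(x,u_0)=\overline U(x)$, hence $\widetilde F(x,u_0)=F(x,\overline U)$; subtracting the two inequalities gives
\[
\int_{\{u_0>\overline U\}}a(x)\bigl(|\nabla u_0|^{p-2}\nabla u_0-|\nabla \overline U|^{p-2}\nabla \overline U\bigr)\cdot\nabla(u_0-\overline U)\,dx\leq 0,
\]
and the strict monotonicity of $z\mapsto |z|^{p-2}z$ forces $|\{u_0>\overline U\}|=0$. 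A symmetric argument with $(\underline u-u_0)^+$ yields $u_0\geq \underline u$, so $u_0$ is a solution of \eqref{MODELDEGQUASILINEAR} lying in $[\underline u,\overline U]$.

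Continuity of $u_0$ on $\overline\Omega$ follows from the Kato-type $L^{\infty}$ bound and local interior continuity available under \eqref{hyp:rma}, combined with the sandwich $\underline u\leq u_0\leq \overline U$, which forces $u_0(x)\to 0$ as $\rdist(x,\pp\Omega)\to 0$ because $\underline u\leq 0\leq \overline U$ there and both are continuous. The main obstacle is the comparison step: because $a$ degenerates or is singular on $\pp\Omega$, one must carefully justify that $(u_0-\overline U)^+\in W^{1,p}_0(\Omega,a)$ and that all weighted integrals produced by the testing are finite; this relies on the structural properties of $W^{1,p}_0(\Omega,a)$ developed earlier in the paper and on the integrability $F(\cdot,\overline U),F(\cdot,\underline u)\in L^{p/(p-1)}(\Omega)$ built into the definitions of sub- and supersolution.
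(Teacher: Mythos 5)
Your overall scheme (truncate, Schauder, compare) agrees with the paper's, but there is a genuine gap precisely where the paper does its hardest work: establishing continuity of the fixed point up to $\pp\Omega$. Your argument for boundary continuity --- the sandwich $\underline u\leq u_0\leq \overline U$ forces $u_0(x)\to 0$ near $\pp\Omega$ --- does not work, because the definitions of sub- and supersolution only require $\max_{\pp\Omega}\underline u\leq 0$ and $\min_{\pp\Omega}\overline U\geq 0$, not $\underline u=\overline U=0$ on $\pp\Omega$. At a point $x_0\in\pp\Omega$ with, say, $\underline u(x_0)=-3$ and $\overline U(x_0)=5$, the sandwich gives nothing better than $-3\leq\liminf u_0\leq\limsup u_0\leq 5$, which does not yield $u_0\in C(\overline\Omega)$ (much less $u_0\in C_0(\overline\Omega)$, which is what the weak formulation ultimately requires). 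The paper resolves this by a barrier argument that you never invoke: Lemma \ref{comparison solution} builds a radial-profile barrier $\psi$ satisfying $-\rdiv(a|\nabla\psi|^{p-2}\nabla\psi)\geq a^{-1/(p-1)}$ near $\pp\Omega$ with $\psi\asymp{\rm d}$, and Proposition \ref{BDCONDITION} compares any bounded solution with a multiple of $\psi$ to obtain the decay $|u_0(x)|\lesssim(1+\|u_0\|_\infty)\,{\rm d}(x)$. This step is exactly where hypothesis (B2) enters --- it guarantees $|\widetilde F(x,u_0)|\lesssim a^{-1/(p-1)}(x)$ near $\pp\Omega$ so that $\psi$ really is a comparison supersolution --- and your proof never uses (B2) at all, which is a strong signal that something essential is missing.

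This omission also undermines a technical step earlier in your comparison argument. You test the truncated equation and the supersolution inequality with $(u_0-\overline U)^+$ and justify its membership in $W^{1,p}_0(\Omega,a)$ by a trace argument. But in this degenerate/singular weighted setting no usual trace theory is available, and the paper deliberately avoids it: it runs the Schauder fixed point in the space $C_0(\overline\Omega)$ (with the resolvent $\LL:Y_a\to C_0(\overline\Omega)$ shown compact and continuous in Lemma \ref{resolventoperator}), so that the fixed point is automatically in $W^{1,p}(\Omega,a)\cap C_0(\overline\Omega)$, and then Lemma \ref{approxbyC1uptobdry} (density of $C^\infty_c$ in $W^{1,p}(\Omega,a)\cap\{v\in C(\overline\Omega):v|_{\pp\Omega}=0\}$) makes $(\underline u-u_0)^+$ and $(u_0-\overline U)^+$ legitimate test functions. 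In short: your fixed-point argument in $W^{1,p}_0(\Omega,a)$ is fine for producing a weak solution of the truncated problem, but the passage to $C(\overline\Omega)$ and the rigorous justification of the comparison test functions both require the boundary barrier machinery of Section 2.3 together with (B2), none of which appears in your proposal.
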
 

\medskip

The proof of Theorem \ref{multiplicitytheorem} is motivated by the techniques described in Chapter 5 in \cite{DRABEKMILOTA}. As an application of Theorem \ref{multiplicitytheorem}, we present existence of a positive solution for the BVP 
\begin{equation}\label{DEGQUASILINEAR}
\left\{
\begin{aligned}
-\rdiv\left(a(x)|\nabla u|^{p-2}\nabla u\right) &= \la \,b(x)\,f(u) \quad &\hbox{in}& \quad \,\,\Omega\\
u&=0 \quad  &\hbox{on}& \quad  \pp \Omega,
\end{aligned}
\right.
\end{equation}
where $\la>0$  is a positive parameter and $f:[0,\infty) \to\R$ is a continuous function satisfying the following hypotheses:

\medskip
{\it (f1)} $f$ is $(p-1)-$sublinear, i.e., $\limsup \limits_{\zeta\to \infty}\frac{f(\zeta)}{\zeta^{p-1}}= 0$; 

{\it (f2)} there exists $r\in (0,p-1)$ such that $\liminf \limits_{\zeta\to \infty}\frac{f(\zeta)}{\zeta^{r}}\in (0,\infty]$.

\medskip
{\it (f3)} $f(0)\leq 0$ and $\min \limits_{0\leq \zeta< \infty}f(\zeta)<0$.

\medskip

Problem \eqref{DEGQUASILINEAR} finds its applications, for instance, in resource management models, see \cite{CASTROMAYASHIVAJI}, and design of suspension bridges, see \cite{GAZZOLA}.

\medskip
{We present the function 
$$
f(\zeta) = -A + \frac{\zeta^{p-1}}{\log(2+ \zeta)} \quad \hbox{for} \quad \zeta \geq 0,
$$
where $A\geq 0$, as an example of a nonlinearity satisfying conditions {\it (f1)}, {\it (f2)} and {\it (f3)}.} 

\medskip
Condition $f(0)\leq 0$ in {\it (f3)} implies that $u=0$ is a supersolution to \eqref{DEGQUASILINEAR}. Under this condition, finding a positive subsolution for \eqref{DEGQUASILINEAR} becomes a subtle matter. We refer the reader to \cite{ALICASTROSHIVAJI,AMBROSSARCOYABUFFONI, BRWONCASTROSHIVAJI,HAISHIVAJI, LEESHIVAJIYE} and references therein, for results concerned with {related boundary value}  problems in the uniformly elliptic case.

\medskip
The authors in \cite{AGUDELODRABEK1} showed the existence of a positive solution of  equation \eqref{DEGQUASILINEAR} either in a ball or in $\R^N$, under a similar set of assumptions on $a, b$ and $f$, but assuming in addition the global radial symmetry of the coefficient $a$ and instead of {\it (f3)}, assuming that there exist $K_0, \delta >0$ and $\gamma \in (0,1)$ such that
{$$
\hbox{for every} \quad \zeta \in (0,\delta), \qquad -\frac{K_0}{\zeta^{\gamma}} \leq f( \zeta) \leq 0.
$$}

\medskip
As remarked in \cite{AGUDELODRABEK1}, our hypotheses do not allow, in general, to obtain either $C^{1}_{loc}-$regularity of weak solutions to \eqref{DEGQUASILINEAR}, see \cite{TOLSKDORF1}, or Hopf's boundary lemma, see \cite{VAZQUEZJL}. Instead, regularity of solutions relies on the local H\"{o}lder continuity due to Serrin, see \cite{SERRIN}. In particular, {\it global a priori $C^1$-estimates} and therefore the construction of a subsolution of \eqref{DEGQUASILINEAR} is a much more challenging task. In {this work}, tools from differential geometry help us overcome this difficulty.

\medskip
{More precisely}, we say that $\pp \Omega$ has {\it non-negative mean curvature} if for every $x\in \pp \Omega$, 
\begin{equation}\label{positivemeancurvature0}
\sum_{i=1}^{N-1} k_{i}(x)\geq 0,
\end{equation}
where $k_{1}, \ldots,k_{N-1}$ are 
the principal curvatures 
of $\pp \Omega$.

\medskip
This geometric notion generalizes convexity and it is  equivalent to the monotonicity of the surface area element of 
$\pp \Omega$, see \cite{GROMOV}. This monotonicity states that the $(N-1)$-dimensional 
volume of every subdomain 
$\omega \subset \pp \Omega$
does not decrease as $\omega$ approaches $\pp \Omega$.

\medskip
From the {\it PDE} point of view, condition \eqref{positivemeancurvature0} implies that the distance function $\rdist(\cdot,\pp \Omega)$ is subharmonic in any subdomain of $\Omega_{\rho_0}$ (see \eqref{def:distfunc}) and in essence this fact will allow us to construct an appropriate subsolution to \eqref{DEGQUASILINEAR}.

\medskip
Our second main result reads as follows.

\begin{theo}\label{theo2}
Let $\Omega$ be a smooth bounded domain such that $\pp \Omega$ has non-negative mean curvature. Let (A1)-(A2) and (B1)-(B2) hold and let $f\in C[0,\infty)$ satisfy {(f1)-(f3)}. Then, there exists $\la_0>0$ such that for every $\la \geq \la_0$ the BVP \eqref{DEGQUASILINEAR} has a solution $u\in C(\overline{\Omega})$ which is positive in $\Omega$.
\end{theo}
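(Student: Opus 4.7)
My plan is to apply Theorem~\ref{multiplicitytheorem} to the nonlinearity $F(x,\zeta):=\la b(x)f(\zeta)$ (extended by $F(x,\zeta)=\la b(x)f(0)$ for $\zeta<0$), so that the whole task reduces to producing, for every $\la$ above a threshold $\la_0$, an ordered pair $0\leq\underline{u}_\la\leq \overline{U}_\la$ of a non-trivial subsolution and a supersolution. The growth bound \eqref{growthF(x,u)0} follows from (B1) and the continuity of $f$ on $[0,\infty)$. Strict positivity of the resulting solution $u$ in $\Omega$ will follow from the strict positivity of $\underline{u}_\la$ on $\Omega$ together with $u\geq \underline{u}_\la$.

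\medskip

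\textbf{Supersolution.} This is the easy step and uses only the $(p-1)$-sublinearity of $f$. Let $\phi>0$ be the solution of the auxiliary weighted $p$-torsion problem
\[
-\rdiv\bigl(a(x)|\nabla\phi|^{p-2}\nabla\phi\bigr)=b(x)\ \text{in }\Omega,\qquad \phi=0\ \text{on }\pp\Omega,
\]
which belongs to $W^{1,p}_0(\Omega,a)\cap C(\overline{\Omega})$ by the functional-analytic framework set up earlier in the paper. By (f1), for every $\ep>0$ there is $C_\ep>0$ with $f(\zeta)\leq C_\ep+\ep\zeta^{p-1}$ on $[0,\infty)$. The ansatz $\overline{U}_\la:=t\phi$ gives $-\rdiv(a|\nabla \overline{U}_\la|^{p-2}\nabla \overline{U}_\la)=t^{p-1}b$, so the supersolution inequality reduces to $t^{p-1}\geq \la C_\ep+\la\ep\|\phi\|_\infty^{p-1}t^{p-1}$; fixing $\ep$ with $\la\ep\|\phi\|_\infty^{p-1}<1/2$ and then $t\geq (2\la C_\ep)^{1/(p-1)}$ suffices. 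The parameter $t$ is kept free, to be enlarged at the end so that $\overline{U}_\la$ dominates $\underline{u}_\la$ throughout $\Omega$.

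\medskip

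\textbf{Subsolution.} This is the crux. The semipositone condition $f(0)\leq 0$ rules out the simplest positive subsolutions, forcing one to use the non-negative mean curvature of $\pp\Omega$ through the generalized subharmonicity of $d(x):=\rdist(x,\pp\Omega)$. Pick $m>0$ with $f(m)>0$; such an $m$ must exist, for otherwise the BVP admits no non-trivial non-negative solution (test the equation against $u$ and use $\la b f(u)\leq 0$ together with $a>0$ in $\Omega$). The plan is to take
\[
\underline{u}_\la(x):=\begin{cases}\psi\bigl(d(x)\bigr),& x\in\Omega_{\rho_0},\\ m,& x\in\Omega\setminus\Omega_{\rho_0},\end{cases}
\]
with $\psi:[0,\rho_0]\to[0,m]$ smooth, increasing, $\psi(0)=0$, $\psi(\rho_0)=m$; the jump of $|\nabla\underline{u}_\la|$ across $\{d=\rho_0\}$ contributes a non-positive surface term in the weak formulation that only helps the subsolution inequality. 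Writing $H(s):={\rm a}(s)\psi'(s)^{p-1}$, a direct computation using $|\nabla d|=1$ and $a={\rm a}\circ d$ on $\Omega_{\rho_0}$ gives
\[
-\rdiv\bigl(a|\nabla\underline{u}_\la|^{p-2}\nabla\underline{u}_\la\bigr)=-H'(d)-H(d)\,\Delta d\quad\text{in }\Omega_{\rho_0},
\]
so the subsolution condition reduces to the pointwise inequality $-H'(d(x))-H(d(x))\,\Delta d(x)\leq \la\, b(x)\, f(\psi(d(x)))$ on $\Omega_{\rho_0}$, together with the trivial inequality $0\leq \la b(x) f(m)$ on $\Omega\setminus\Omega_{\rho_0}$. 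The profile $\psi$ must then be engineered to balance three ingredients: the finite integral $\int_0^{\rho_0}{\rm a}(\tau)^{-1/(p-1)}d\tau<\infty$ from \eqref{hyp:rma} (which governs the admissible range of $\psi$); the uniform geometric sign and bound on $\Delta d$ in $\Omega_{\rho_0}$ afforded by the non-negative mean curvature of $\pp\Omega$; and the asymptotic equivalence $b(x)\asymp {\rm a}(d(x))^{-1/(p-1)}$ near $\pp\Omega$ from (B2), which matches the decay of $a$ to the blow-up of $b$ through exactly the exponent $p-1$ dictated by the operator.

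\medskip

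\textbf{Conclusion and main obstacle.} Once $\underline{u}_\la$ is constructed for $\la\geq \la_0$, we enlarge $t$ so that $\overline{U}_\la=t\phi\geq m\geq \underline{u}_\la$ throughout $\Omega$ and Theorem~\ref{multiplicitytheorem} produces a solution $u\in C(\overline{\Omega})$ with $0\leq\underline{u}_\la\leq u\leq \overline{U}_\la$, positive in $\Omega$ since $\underline{u}_\la>0$ on $\Omega$. The main obstacle is clearly the subsolution step: the negativity of $f\circ\psi$ on the thin boundary strip where $\psi(d)$ is still small has to be absorbed, and it is precisely here that the non-negative mean curvature hypothesis is used, since it is the only geometric input that controls the sign of $\Delta d$ in $\Omega_{\rho_0}$. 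Taking $\la\geq \la_0$ then uses the positive contribution $\la b(x)f(m)$ on the set $\{\psi(d)\geq m/2\}\cup(\Omega\setminus\Omega_{\rho_0})$ (which grows linearly in $\la$ and is amplified by the blow-up of $b$) to dominate the bounded-magnitude unfavorable terms.
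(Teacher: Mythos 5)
Your supersolution step is essentially the paper's Proposition~\ref{SUBSPERSLNS0} (both scale the solution of the auxiliary weighted torsion problem), but the subsolution step has a genuine gap. You propose $\underline{u}_\la(x)=\psi(d(x))$ with a $\la$\emph{-independent} profile $\psi:[0,\rho_0]\to[0,m]$. The subsolution inequality must hold against \emph{every} non-negative $\varphi\in W^{1,p}_0(\Omega,a)$, in particular those supported in the thin boundary strip $\{f(\psi(d))<0\}$. There the right-hand side $\la\,b(x)\,f(\psi(d(x)))$ tends to $-\infty$ as $\la\to\infty$ (and is amplified by the blow-up of $b$ via (B2)), whereas the left-hand side $-H'(d)-H(d)\,\Delta d$ is a fixed function of $x$. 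Hence for $\la$ large the inequality necessarily fails when tested on that strip; the ``positive contribution on $\{\psi(d)\geq m/2\}\cup(\Omega\setminus\Omega_{\rho_0})$'' cannot rescue it, since one cannot transfer sign across disjoint regions in a weak inequality that must hold for every localized $\varphi$. This is exactly the obstacle that the paper's Proposition~\ref{SUBSUPERSLNS} is designed to overcome: the subsolution is taken of the form $\underline{u}=\la^{\sigma}\upsilon$ with $\sigma>\frac{1}{p-1}$ (see \eqref{exponentssubslntilde}), so that $-\rdiv\bigl(a|\nabla\underline{u}|^{p-2}\nabla\underline{u}\bigr)$ scales like $\la^{\sigma(p-1)}$, which is \emph{superlinear} in $\la$ and therefore eventually dominates the unfavorable $\la\,b\,\tilde f(0)$ contribution near $\pp\Omega$ (the mean curvature condition only ensures the transport term $\pp_{y_N}\log\sqrt{\det g}$ has the right sign and does not spoil this). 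Moreover, the near-boundary profile is not a generic smooth function of $d$ but the precise power law $B(\rho)\bigl(\int_0^{y_N}{\rm a}^{-1/(p-1)}\bigr)^{\beta}$ with $\beta>1$, chosen so that the leading piece of $-H'$ already has the favorable sign and the correct magnitude ${\rm a}^{-1/(p-1)}$.

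A secondary point: your choice of $m$ with $f(m)>0$ is not justified by (f1)--(f2) alone, and the argument ``such $m$ must exist, for otherwise the BVP admits no non-trivial non-negative solution'' is not a step in a constructive existence proof. The paper instead introduces a monotone increasing minorant $\tilde f\le f$ with prescribed power-law growth $\tilde f(\zeta)/\zeta^{r}\to\mu>0$ for some $r\in(1,p-1)$; this is what the exponent $\sigma<\frac{1}{p-1-r}$ is tuned to match in the transition region $(\rho,\rho_1)$, and it is stronger than merely $f(m)>0$. Without an explicit $\la$-dependent amplitude for the subsolution and a quantitative lower bound of this type on $f$, the construction does not close.
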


We refer the reader to \cite{DRABEKLAKSHMI,KAUFMANNRAMOS2018} for similar results in the uniformly elliptic setting.

\medskip
In the case $a(x)\equiv 1$ and $p=2$, under slightly more restrictive assumptions on $f$ and with no assumptions on the geometry of $\pp \Omega$, the authors in \cite{DANCERSHI2006} proved uniqueness of positive solutions of   \eqref{DEGQUASILINEAR} for $\lambda>0$ large. We also refer the reader to \cite{HERRONLOPERA2015} for non-existence of positive radial solutions of a related BVP in the superlinear setting.

\medskip
{We have considered assumption {\it (A2)} for the sake of clarity in our presentation. Although, we remark that Theorems \ref{multiplicitytheorem} and \ref{theo2} are still valid if instead of {\it (A2)}, we assume the existence of functions ${\sf a}_1, {\sf a}_2: (0,\rho_0) \to (0,\infty)$, both satisfying \eqref{hyp:rma} and such that
$$
{\sf a}_1\big({\rm dist}(x,\partial \Omega)\big) \leq a(x) \leq {\sf a}_2\big({\rm dist}(x,\partial \Omega)\big)
$$  
for every $x\in \Omega_{\rho_0}$. The proofs require only straight forward changes and details are left to the reader.} 

\medskip
The paper is organized as follows. In Section 2 we present the preliminary results and a priori estimates needed for the proof of Theorem \ref{multiplicitytheorem}.  The proof of Theorem \ref{multiplicitytheorem} is presented in Section 3. In Section 4 we provide the proof Theorem \ref{theo2}.


\section{Preliminaries}\label{preliminaries}

\subsection{Weighted Sobolev spaces}\label{2.1}
Let $p\in (1,N)$, $\Omega\subset \R^N$ be a smooth bounded domain and let $a:\Omega \to \R$ satisfy {\it (A1)} and {\it (A2)}.

\medskip
From these hypotheses and 
with the help of {\it Fermi coordinates}, we verify in subsection \ref{2.3} that $a\in L^1(\Omega)$ and $a^{-1}\in L^s(\Omega)$. Observe also that the H\"{o}lder inequality and the fact that $s>\frac{1}{p-1}$ yield $a^{-\frac{1}{p-1}}\in L^{1}(\Omega)$.

\medskip
Thus, the spaces $W^{1,p}(\Omega,a)$ and $W_0^{1,p}(\Omega,a)$, defined in Section 1, are separable. They are also uniformly convex and therefore {\it reflexive Banach spaces} (see Chapter 1 in \cite{DRABEKKUFNERNICOLOSI}).

\medskip

We also notice from {\it (A1)} that for any $D$ with  $\overline{D} \subset \Omega$,
$$
W^{1,p}(D,a)=W^{1,p}(D) \quad  \hbox{and} \quad  W_0^{1,p}(D,a)=W_0^{1,p}(D).
$$

In what follows $\mathds{1}_{E}$ denotes the characteristic function of a set $E\subset \R^N$. Also, for any $t\in \R$, write
\begin{equation}\label{defposnegparts}
\begin{aligned}
t^+:= &\frac{t+ |t|}{2},
\qquad  t^-:=\frac{t-|t|}{2},
\end{aligned}
\end{equation}
so that $t^+ = \max(t,0)$ and $t^-= \min(t,0)$. 

\medskip
Our first lemma concerns with algebraic properties of  the spaces $W^{1,p}(\Omega,a)$ and $W_0^{1,p}(\Omega,a)$.

\begin{lemma}\label{positivenegativeparts}
Under the hypotheses {\it (A1)} and {\it (A2)},
\begin{itemize}
\item[(i)] if $v\in W^{1,p}(\Omega,a)$, then $|v|, v^+,v^-$ belong to $W^{1,p}(\Omega,a)$ and
\begin{equation}\label{gradientpositnegatparts}
\nabla v^+= \mathds{1}_{\{v>0\}}\nabla v, \qquad  \nabla v^-= \mathds{1}_{\{v<0\}}\nabla v;
\end{equation}

\medskip

\item[(ii)] 
if $v\in W^{1,p}_0(\Omega,a)$, then $|v|,v^+,v^- \in W^{1,p}_0(\Omega,a)$ and

\medskip
\item[(iii)] if $v,w\in W^{1,p}(\Omega,a)$, then $\min(v,w), \max(v,w)\in W^{1,p}(\Omega,a)$.
\end{itemize}
\end{lemma}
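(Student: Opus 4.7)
The plan is to reduce everything to classical Sobolev-space facts combined with the straightforward pointwise bound $|\nabla v^{\pm}|\leq |\nabla v|$ that automatically preserves the weighted norm. For (i), since $v\in W^{1,p}(\Omega,a)$ implies $v\in W^{1,p}_{loc}(\Omega)$, the standard Stampacchia truncation result in unweighted Sobolev spaces gives $|v|,v^+,v^-\in W^{1,p}_{loc}(\Omega)$ together with the identities in \eqref{gradientpositnegatparts}. It then remains only to check membership in $W^{1,p}(\Omega,a)$, which is immediate: on the set $\{v>0\}$ we have $|\nabla v^+|=|\nabla v|$ and on its complement $\nabla v^+=0$, so
\begin{equation*}
\int_\Omega |v^+|^p\,dx+\int_\Omega a(x)|\nabla v^+|^p\,dx\leq \int_\Omega |v|^p\,dx+\int_\Omega a(x)|\nabla v|^p\,dx<\infty,
\end{equation*}
and the same computation works for $v^-$ and $|v|=v^+-v^-$.

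For (iii), I would simply write $\max(v,w)=\tfrac{1}{2}(v+w+|v-w|)$ and $\min(v,w)=\tfrac{1}{2}(v+w-|v-w|)$ and apply (i) to the function $v-w\in W^{1,p}(\Omega,a)$, together with linearity of the space.

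The delicate part is (ii), which is the main obstacle because it requires a density argument adapted to the weighted framework. My plan is the following. Given $v\in W^{1,p}_0(\Omega,a)$, pick $\varphi_n\in C^\infty_c(\Omega)$ with $\varphi_n\to v$ in $W^{1,p}(\Omega,a)$. Applying (i) we get $\varphi_n^+\to v^+$ in $W^{1,p}(\Omega,a)$ (continuity of the positive-part operation follows from $|\varphi_n^+ - v^+|\leq |\varphi_n-v|$ and $|\nabla \varphi_n^+-\nabla v^+|^p\leq 2^p|\nabla \varphi_n-\nabla v|^p+2^p|\mathds{1}_{\{\varphi_n>0\}}-\mathds{1}_{\{v>0\}}|^p|\nabla v|^p$, with the second term going to zero by dominated convergence after passing to a subsequence with $\varphi_n\to v$ a.e.). Thus it suffices to approximate a single Lipschitz, compactly supported function $\varphi_n^+$ by $C^\infty_c(\Omega)$ functions in $\|\cdot\|_{W^{1,p}(\Omega,a)}$. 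Since $\mathrm{supp}\,\varphi_n^+$ is a compact subset of $\Omega$, assumption (A1) provides a neighborhood $D$ with $\overline{D}\subset\Omega$ and $\inf_D a>0$, so on $D$ the weighted and unweighted $W^{1,p}$-norms are equivalent. Then a standard mollification (extended by zero outside a slightly smaller compact set, as done for $W^{1,p}_0(D)$) yields $C^\infty_c(D)\subset C^\infty_c(\Omega)$ approximations of $\varphi_n^+$ in $W^{1,p}(D)$, hence in $W^{1,p}(\Omega,a)$ since the weight contributes nothing outside the support. A diagonal extraction then produces the required sequence in $C^\infty_c(\Omega)$ converging to $v^+$, giving $v^+\in W^{1,p}_0(\Omega,a)$. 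The same argument works for $v^-$ and $|v|=v^+-v^-$.

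The step I expect to be subtle is the localization-plus-mollification argument in (ii): the weight $a$ may vanish on $\partial\Omega$, so convolution with a mollifier cannot be applied globally. The key idea is that Lipschitz compactly supported approximants automatically live inside a set where (A1) gives a positive lower bound on $a$, reducing the problem to the classical unweighted setting; everything else is bookkeeping with dominated convergence and the pointwise bound $|\nabla v^\pm|\leq |\nabla v|$.
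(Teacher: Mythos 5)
Your argument for parts (i) and (iii) matches the paper's in substance: the paper also invokes a classical truncation theorem (Theorem 7.8 in Gilbarg--Trudinger) for (i), and for (iii) writes $\min(v,w)=(v-w)^-+w$ and $\max(v,w)=(w-v)^++v$, which is algebraically equivalent to your identities. For part (ii), however, you take a genuinely different route. The paper avoids mollification entirely: given $v_n\in C^\infty_c(\Omega)$ with $v_n\to v$ in $W^{1,p}_0(\Omega,a)$, it sets $w_n:=\sqrt{v_n^2+\varepsilon_n^2}-\varepsilon_n$ for a suitable $\varepsilon_n\to 0$, observes that $w_n\in C^\infty_c(\Omega)$ with the same support as $v_n$, verifies $w_n\to|v|$ in $W^{1,p}(\Omega,a)$, and then gets $v^\pm$ by linearity. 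Your approach instead shows $\varphi_n^+\to v^+$ in the weighted norm and then relies on \textit{(A1)} to mollify each fixed compactly supported Lipschitz function $\varphi_n^+$ inside a subdomain $\overline{D}\subset\Omega$ where $a$ is bounded above and below by positive constants, followed by a diagonal extraction. Both are correct; the paper's explicit smoothing function makes the argument one step with no localization, while yours is the more robust ``bare-hands'' version and makes explicit where the local uniform ellipticity of the operator enters. One imprecision worth fixing: the claimed a.e.\ convergence $\mathds{1}_{\{\varphi_n>0\}}\to\mathds{1}_{\{v>0\}}$ fails on the set $\{v=0\}$ in general, since there $\varphi_n$ may hover near $0$ with arbitrary sign. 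Your dominated convergence argument still closes, but only after invoking the standard fact that $\nabla v=0$ a.e.\ on $\{v=0\}$, which forces the term $|\mathds{1}_{\{\varphi_n>0\}}-\mathds{1}_{\{v>0\}}|^p|\nabla v|^p$ to vanish a.e.\ there regardless of the behavior of the indicator functions; on $\{v\neq 0\}$ the indicator convergence does hold a.e.\ along your subsequence, so the rest of the argument is sound.
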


\begin{proof} {\rm (i)} From {\it (A1)} and Theorem 7.8 in \cite{GILBARGTRUDINGER}, $|v|,v^+,v^- \in W^{1,p}_{loc}(\Omega)$ and \eqref{gradientpositnegatparts} holds. Directly from \eqref{defposnegparts} and  \eqref{gradientpositnegatparts}, it follows that $|v|,v^+ , v^- \in W^{1,p}(\Omega,a)$.

\medskip

{\rm (ii)} Let $v\in W^{1,p}_0(\Omega,a)$. There exists $\{v_n\}_n\subset C^{\infty}_c(\Omega)$ such that $v_n\to v$ strongly in $W^{1,p}_0(\Omega,a)$. Let $\{\ep_n\}_n \subset (0,\infty)$ be such that $\ep_n \to 0$ as $n\to \infty$ and set
$$
w_n:= \sqrt{v_n^2 + \ep_n^2} -\ep_n \quad \hbox{for }\quad n\in \mathbb{N}.
$$

Observe that $w_n \in C^{\infty}_{c}(\Omega)$ since ${\rm supp}\,w_n = {\rm supp}\,v_n$. Also, it is direct to verify that $w_n \to |v|$ strongly in $W^{1,p}(\Omega,a)$ as $n\to \infty$. Therefore, $|v|\in W^{1,p}_0(\Omega,a)$ and from \eqref{defposnegparts}, $v^+,v^-\in W^{1,p}_0(\Omega,a)$.

\medskip

{\rm (iii)} Let $v,w\in W^{1,p}(\Omega,a)$. The proof is a direct consequence of {\it (i)} and the fact that 
$$
\min (v,w)= (v-w)^- + w \quad \hbox{and} \quad\max (v,w) = (w-v)^+ + v.
$$ 
\end{proof}

\begin{lemma}\label{approxbyC1uptobdry}
Assume {\it (A1)} and {\it (A2)}. Then, $W^{1,p}_0(\Omega,a)$ is the closure of the  subspace 
$$
X:=W^{1,p}(\Omega,a)\cap \{v\in C(\overline{\Omega})\,:\, v|_{\pp \Omega}=0\}
$$
with respect to the norm $\|\cdot\|_{W^{1,p}(\Omega,a)}$.
\end{lemma}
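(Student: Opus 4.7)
\medskip
\noindent\textbf{Proof proposal.} The inclusion $W^{1,p}_0(\Omega,a) \subset \overline{X}$ is immediate from the definition of $W^{1,p}_0(\Omega,a)$, since $C^\infty_c(\Omega)\subset X$. The content of the lemma is the reverse inclusion $X\subset W^{1,p}_0(\Omega,a)$, which I plan to obtain by a standard two-step strategy: truncate away values close to zero (to force compact support inside $\Omega$) and then mollify on a set where the weight $a$ is non-degenerate.

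Fix $v\in X$ and, for $\varepsilon>0$, consider the Lipschitz truncation $\phi_\varepsilon(t)=\mathrm{sgn}(t)(|t|-\varepsilon)^+$ and set $v_\varepsilon:=\phi_\varepsilon(v)$. Since $\phi_\varepsilon$ is $1$-Lipschitz with $\phi_\varepsilon(0)=0$, and thanks to Lemma \ref{positivenegativeparts} applied to $(v-\varepsilon)^+$ and $(v+\varepsilon)^-$, we get $v_\varepsilon\in W^{1,p}(\Omega,a)$ with $\nabla v_\varepsilon=\mathds{1}_{\{|v|>\varepsilon\}}\nabla v$. Using dominated convergence (with $|v|$ and $|\nabla v|$ as envelopes and the weighted measure $a(x)\,dx$ for the gradient part), one checks that $v_\varepsilon\to v$ in $W^{1,p}(\Omega,a)$ as $\varepsilon\to 0$. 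The gain is that, because $v\in C(\overline{\Omega})$ vanishes on $\partial\Omega$, the set $K_\varepsilon:=\{x\in\overline{\Omega}:|v(x)|\geq \varepsilon\}$ is a compact subset of $\Omega$, and $\mathrm{supp}\,v_\varepsilon\subset K_\varepsilon$.

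Next, choose an open set $U_\varepsilon$ with $K_\varepsilon\subset U_\varepsilon$ and $\overline{U_\varepsilon}\subset\Omega$. By hypothesis \textit{(A1)}, $a\in L^\infty(U_\varepsilon)$ and $\inf_{U_\varepsilon}a>0$, so on $U_\varepsilon$ the weighted and unweighted $W^{1,p}$-norms are equivalent. Viewing $v_\varepsilon$ as an element of $W^{1,p}_0(U_\varepsilon)$ (extended by zero), standard mollification provides $\psi_{\varepsilon,\delta}\in C^\infty_c(U_\varepsilon)\subset C^\infty_c(\Omega)$ with $\psi_{\varepsilon,\delta}\to v_\varepsilon$ in $W^{1,p}(U_\varepsilon)$ as $\delta\to 0$, hence also in $W^{1,p}(\Omega,a)$ after extension by zero. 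Thus $v_\varepsilon\in W^{1,p}_0(\Omega,a)$, and then $v=\lim_{\varepsilon\to 0}v_\varepsilon$ in $W^{1,p}(\Omega,a)$ yields $v\in W^{1,p}_0(\Omega,a)$. Since $v\in X$ was arbitrary, the proof is complete.

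The main delicate point I expect is precisely the first step: the weight $a$ may vanish (or blow up in $a^{-1}$) near $\partial\Omega$, so one cannot mollify $v$ directly and control the weighted gradient norm. The truncation $\phi_\varepsilon$ is what isolates this difficulty, pushing the support into a region where \textit{(A1)} restores uniform ellipticity; once that is done, the convergence $v_\varepsilon\to v$ in $W^{1,p}(\Omega,a)$ is just dominated convergence applied to the weighted integrals, and the final mollification is standard.
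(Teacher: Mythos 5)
Your proposal is correct, and it takes a genuinely different route from the paper. The paper proves the inclusion $X\subset W^{1,p}_0(\Omega,a)$ by first showing that any $u\in X$ can be approximated by elements of $W^{1,p}(\Omega,a)\cap C_c(\Omega)$: this is done with a \emph{spatial} cutoff $\chi(m\,\mathrm{dist}(\cdot,\partial\Omega))$ after boundary flattening, the error being controlled by a weighted Hardy-type estimate
\[
|u(x',x_N)|\le x_N\Bigl(\int_0^{x_N}{\rm a}^{-\frac1{p-1}}\Bigr)^{\frac{p-1}{p}}\Bigl(\int_0^{x_N}a|\nabla u|^p\Bigr)^{\frac1p}
\]
together with the integrability hypotheses \eqref{hyp:rma}, and then patched with a partition of unity. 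You instead truncate in the \emph{codomain}: set $v_\varepsilon=(v-\varepsilon)^++(v+\varepsilon)^-$, so that Lemma~\ref{positivenegativeparts}(i) gives $v_\varepsilon\in W^{1,p}(\Omega,a)$ and $\nabla v_\varepsilon=\mathds{1}_{\{|v|>\varepsilon\}}\nabla v$. Continuity of $v$ up to $\partial\Omega$ with $v|_{\partial\Omega}=0$ immediately gives $\mathrm{supp}\,v_\varepsilon\subset\{|v|\ge\varepsilon\}\Subset\Omega$, eliminating any need for boundary flattening or the Hardy-type estimate; and
\[
\int_\Omega a\,|\nabla(v-v_\varepsilon)|^p\,dx=\int_{\{|v|\le\varepsilon\}}a\,|\nabla v|^p\,dx\longrightarrow\int_{\{v=0\}}a\,|\nabla v|^p\,dx=0,
\]
by dominated convergence and the standard fact that $\nabla v=0$ a.e.\ on $\{v=0\}$ for $v\in W^{1,p}_{loc}$ (implicit in the same Theorem~7.8 of \cite{GILBARGTRUDINGER} already invoked in Lemma~\ref{positivenegativeparts}). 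The remaining mollification step, using \textit{(A1)} to restore uniform ellipticity on $U_\varepsilon\Supset K_\varepsilon$, matches the paper's final step. The net effect is a noticeably shorter proof of the same result; what the paper's longer argument produces along the way is the quantitative boundary estimate \eqref{estimateu} (a weighted Sobolev--Hardy inequality), which has some independent interest but is not needed for the lemma itself.
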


\begin{proof} {Observe that $C_c^{\infty}(\Omega)\subset X$ and hence $W^{1,p}_0(\Omega,a)$ is contained in the closure of $X$. To prove the reverse inclusion, we claim first that $X$ is the closure of $W^{1,p}(\Omega,a)\cap C_c(\Omega)$ with respect to the norm $\|\cdot\|_{W^{1,p}(\Omega,a)}$.}

\medskip
To prove the claim we follow the lines of the proof of Theorem 2 from section 5.5 in \cite{EVANSLAWRENCEBOOK}. This is done in two steps.

\medskip
{\it Step 1.} Let $u\in X$ and let $x_* \in \pp\Omega$ be arbitrary, but fixed. With no loss of generality, assume that $x_*=(x'_*,0)$ with $x_*'\in \R^{N-1}$. For any $\rho\in (0,\rho_0)$ (see \eqref{def:distfunc}), denote 
$$
\mathcal{C}_{\rho}(x_*):=D^{N-1}_{\rho}(x_*') \times (-\rho,\rho),
$$
where $D_{\rho}^{N-1}(x_*')$ is the ball in $\R^{N-1}$ of radius $\rho$ and centered at $x_*'$.

\medskip
After a local flattening procedure and relabelling of the axis, we may also assume that there exists $\rho_* \in (0,\rho_0)$ such that
$$
\begin{aligned}
\Gamma &:=\pp \Omega \cap \mathcal{C}_{\rho_*}(x_*)= \{(x',x_N)\,:\, x_N=0\}\cap \mathcal{C}_{\rho_*}(x_*) ,\\
\Omega_* &:= \Omega \cap \mathcal{C}_{\rho_*}(x_*)= \{(x',x_N)\,:\, x_N > 0\}\cap C_{\rho_*}(x_*).
\end{aligned}
$$

Let $\ep>0$ be arbitrary, but fixed. Since $u\in W^{1,p}(\Omega,a)$, there exists $m_*\in \Bbb N$ with $m_* \geq \frac{1}{2\rho_*}$ such that 
\begin{equation}\label{intgradrho*}
\forall\, m \geq m_*:\,\,\int_{\Gamma}\int_0^{\frac{2}{m}} a(x)|\nabla u|^pdx <\ep.
\end{equation}

From \eqref{hyp:rma} we may also  assume that for any $m \geq m_*$,
\begin{equation}\label{integrabilitya}\left(\int_{\frac{1}{m}}^{\frac{2}{m}}{\sf a}(\zeta)d\zeta\right)\left(\int_{0}^{\frac{2}{m}}{\sf a}^{-\frac{1}{p-1}}(\zeta)d \zeta\right)^{p-1}  < \ep.
\end{equation}

 Let $\chi:\R \to [0,1]$ be a smooth cut-off function with $\chi \equiv 0$ in $(-\infty,1]$ and $\chi \equiv 1$ in $[2,\infty)$.

\medskip

For any $m\in \mathbb{N}$ with $m \geq m_*$, set 
$$
\begin{aligned}
\chi_{m}(x_N):=&\chi(m x_N) \quad &\hbox{for}& \quad 0<x_N< \infty,\\
u_{m}(x):=& \chi_{m}(x_N)\,u(x) \quad &\hbox{for}& \quad x\in \overline{\Omega}_*. 
\end{aligned}
$$

Observe that $u_m\in W^{1,p}(\Omega_*,a)\cap C(\overline{\Omega}_*)$, $u_m = 0$ in $\Gamma \times (0,\frac{1}{m})$ and
$$
\nabla u_m = \chi(m x_N)\nabla u + m \chi'(m x_N)u\quad \hbox{a.e. in}\quad \Omega_*.
$$

Thus, there exists a constant $C>0$, depending only on $p$, such that {for any $m\geq m*$ and a.e. in $\Omega_*$,}
$$
|\nabla (u_m - u)|^p \leq C(1- \chi(m x_N))^p|\nabla u|^p + C m^p |\chi'(m x_N)|^p|u|^p.
$$

Denote $V_*:= \Omega \cap \mathcal{{C}}_{\frac{\rho_*}{2}}(x_*)$. Taking $C$ larger if necessary, but still depending only on $p$,
\begin{equation}\label{estimateIm+IIm}
\begin{aligned}
\int_{V_*}a(x)|\nabla (u_m - u)|^pdx &\leq \underbrace{C \int_{\Gamma}\int_{0}^{\frac{2}{m}}a(x)|\nabla u|^p dx}_{I_m} \\
& \hspace{3cm}+ \underbrace{Cm^p\int_{\Gamma}\int_{\frac{1}{m}}^{\frac{2}{m}}a(x',x_N)|u(x',x_N)|^pdx_N dx'}_{II_m}.  
\end{aligned}
\end{equation}

From \eqref{intgradrho*}, for every $m \geq m_*$, 
\begin{equation}\label{estimateIm}
0\leq I_m < C \ep.
\end{equation}

Next, we estimate $II_m$. Let $(x',x_N)\in \Gamma\times (\frac{1}{m},\frac{2}{m})$. Using  Theorem 2 from section 4.9.2 in \cite{EVANSGARIEPY1992}, we estimate,
\begin{equation}\label{estimateu}
\begin{aligned}
|u(x',x_N)|&\leq \underbrace{|u(x',0)|} \limits_{=0} + \int_0^{x_N}x_N|\nabla u(x',\zeta)|
d\zeta\\
&\leq x_N \left(\int_{0}^{x_N}{a}^{-\frac{1}{p-1}}(x',\zeta)d\zeta\right)^{\frac{p-1}{p}} \left(\int_{0}^{x_N}a(x',\zeta)|\nabla u(x',\zeta)|^p d\zeta\right)^{\frac{1}{p}}.
\end{aligned}
\end{equation}

From {\it (A2)} and \eqref{estimateu}, 
$$
\begin{aligned}
II_m &\leq C m^p\,\int_{\Gamma}\int_{\frac{1}{m}}^{\frac{2}{m}}{\sf a}(x_N) x_N^p \left(\int_{0}^{x_N}{\sf a}^{-\frac{1}{p-1}}(\zeta)d\zeta\right)^{\frac{1}{p-1}} \left(\int_{0}^{x_N}a(x',\zeta)|\nabla u(x',\zeta)|^p d\zeta\right)dx_N dx'\\
&\leq C \left(\int_{\frac{1}{m}}^{\frac{2}{m}}{\sf a}(x_N)dx_N \right)\left(\int_{0}^{\frac{2}{m}}{\sf a}^{-\frac{1}{p-1}}(\zeta)d\zeta\right)^{\frac{1}{p-1}}\int_{\Gamma} \int_{0}^{\frac{2}{m}}a(x)|\nabla u|^p dx,
\end{aligned}
$$
where $C>0$ is taken again larger if necessary, but still depending only on $p$.

\medskip
Putting together \eqref{intgradrho*}, \eqref{integrabilitya} {and the latter inequalities},
\begin{equation}\label{estimateIIm}
\begin{aligned}
II_m & \leq  C \left(\int_{\frac{1}{m}}^{\frac{2}{m}}{\sf a}(x_N)dx_N \right)\left(\int_{0}^{\frac{2}{m}}{\sf a}^{-\frac{1}{p-1}}(\zeta)d\zeta\right)^{\frac{1}{p-1}} I_m\\
& \leq C\ep^2.
\end{aligned}
\end{equation}

Expressions \eqref{estimateIm+IIm}, \eqref{estimateIm} and \eqref{estimateIIm} yield
$$
\forall\, m\geq  m_*\,:\,\,\int_{V_*}a(x)|\nabla (u_m -u)|^p dx\leq  C(1 + \ep)\ep.
$$

Since $u_m \to u$ in $L^{\infty}(V_*)$, we assume also that 
\begin{equation*}\label{Lpnorm}
\forall\, m\geq  m_*\,:\,\,\|u_m-u\|_{L^p(V_*)} \leq \ep
\end{equation*}
and therefore, for some $\tilde{C}>0$ independent of $\ep$, and for every $m \geq m_*$,
$$
\|u_m -u\|_{W^{1,p}(V_*,a)}\leq \tilde{C}\ep.
$$

\medskip
{\it Step 2.} {Let $v\in X$ and consider $\{V_i\}_{i=0}^n$} an open covering for $\overline{\Omega}$, where for $i=1,\ldots,m$, $V_i$ is a neigborhood of a point $x_i \in \pp \Omega$ as described in Step 1 and $\overline{V}_0\subset \Omega$.

\medskip
{Let $\ep>0$ be arbitrary, but fixed. From Step 1,} there exists $\tilde{\rho}\in (0,\rho_0)$ and for any $i=1,\ldots,m$, there exists $v_i\in W^{1,p}(V_i,a)\cap C(\overline{V_i})$ such that 
$$
\|v_i- v\|_{W^{1,p}(V_i,a)} \leq \ep
$$ 
and $v_i=0$ in  $V_i \cap \Omega_{\tilde{\rho}}$, where 
$$
\Omega_{\tilde{\rho}}:= \{x\in \Omega\,:\,0< {\rm dist}(x,\pp \Omega)<\tilde{\rho}\}.
$$

Since $W^{1,p}(V_0,a)=W^{1,p}(V_0)$, Theorem 1 from Section 5.3.1 in \cite{EVANSLAWRENCEBOOK} yields the existence of $v_0\in C^{\infty}(\overline{V}_0)$ such that 
$$
\|v_0 - v\|_{W^{1,p}(V_0,a)} \leq \ep.
$$  

Let $\{\chi_i\}_{i=0}^m$ be a smooth partition of the unity associated to the covering $\{V_i\}_{i=0}^m$. Set ${\rm v}_i:=\chi_i v_i$ and $v^*:= \sum_{i=0}^m {\rm v}_i$. Observe that $v^* \in W^{1,p}(\Omega,a)\cap C_c(\overline{\Omega})$.

\medskip
We estimate
$$
\begin{aligned}
\|v^* -v\|_{W^{1,p}(\Omega,a)} \leq &\sum_{i=0}^m \|{\rm v}_i - \chi_i v\|_{W^{1,p}(V_i,a)}\\
\leq & \hat{C} \sum_{i=0}^m \|v_i - v\|_{W^{1,p}(V_i,a)}\\
\leq & \hat{C}\,(m+1)\,\ep
\end{aligned}
$$
for some constant $\hat{C}>0$ depending only on $p$ and $\Omega$.

\medskip
Since {$\ep>0$ and $v\in X$ are arbitrary}, we conclude that $X$ is the closure of $W^{1,p}(\Omega,a)\cap C_c(\Omega)$ with respect to the norm $\|\cdot\|_{W^{1,p}(\Omega,a)}$ which proves the claim.

\medskip

We finish the proof now. Let $u\in 
X$ and let $\ep>0$. From Steps 1 and 2, there exists $v\in W^{1,p}(\Omega,a)\cap C_c({\Omega})$ such that 
$$
\|u-v\|_{W^{1,p}(\Omega,a)}\leq \ep.
$$

Let $D$ be an open set such that ${\rm supp}v \subset D \subset \overline{D}\subset \Omega$. From {\it (A1)}, $W^{1,p}_0(D,a)=W^{1,p}_0(D)$ and for every $w\in W^{1,p}_0(D,a)$,
$$
\left(\inf \limits_{D} a\right)^\frac{1}{p}\|w\|_{W^{1,p}_0(D)} \leq \|w\|_{W^{1,p}_0(D,a)}\leq \|a\|_{L^{\infty}(D)}^\frac{1}{p}\|w\|_{W^{1,p}_0(D)}.
$$

On the other hand, since $v\in  W^{1,p}_0(D)$, there exists $w\in C^{\infty}_c(D)$ such that 
$$
\|v-w\|_{W^{1,p}_0(D)}\leq \|a\|^{-\frac{1}{p}}_{L^{\infty}(D)}\ep.
$$

Therefore, 
$$
\begin{aligned}
\|u-w\|_{W^{1,p}_0(\Omega,a)}  &\leq \|u-v\|_{W^{1,p}_0(\Omega,a)} + \|v-w\|_{W^{1,p}_0(\Omega,a)} \\
& \leq \ep+ \|v-w\|_{W^{1,p}_0(D,a)}\\
& \leq \ep+ \|a\|_{L^{\infty}(D)}^{\frac{1}{p}}\|v-w\|_{W^{1,p}_0(D)}\\
& \leq 2\ep.
\end{aligned}
$$

Since {$\ep>0$ and $u\in X$ are} arbitrary, we conclude that  $X \subset W^{1,p}_0(\Omega,a)$. This completes the proof of Lemma \ref{approxbyC1uptobdry}.
\end{proof}

We {complete} this subsection {with} some remarks on Sobolev embeddings for the space $W^{1,p}(\Omega,a)$. From \eqref{sobolevexponents} and H\"{o}lder's inequality, the embedding
$$
W^{1,p}(\Omega,a) \hookrightarrow W^{1,p_s}(\Omega)
$$
is continuous (see \cite{DRABEKKUFNERNICOLOSI}). Since $1<p_s<N$, from the standard Sobolev embeddings,
\begin{equation}\label{embeddingimportant}
W^{1,p}(\Omega,a)\hookrightarrow L^{m}(\Omega)
\end{equation}
with
$$
1\leq m \left\{
\begin{array}{ccc}
\leq p_s^*,&\hbox{continuous embedding}\\
\\
<p_s^*, & \hbox{compact embedding.}
\end{array}
\right.
$$

In particular, there exists $\mu_0>0$ such that for every $v\in W^{1,p}(\Omega,a)$,
\begin{equation}\label{WeightedSobolevEmbedding}
\mu_0 \|v\|_{L^{p_s^*}(\Omega)}\leq \|v\|_{W^{1,p}(\Omega,a)}.
\end{equation}

From \eqref{hyp:rma} $
p_s^*>p$, which together with \eqref{embeddingimportant}, yield
$$
W_0^{1,p}(\Omega,a)\hookrightarrow L^p(\Omega).
$$

\medskip
Friedrich's inequality implies that the norm
$$
\|u\|_{W_0^{1,p}(\Omega,a)}:=\left(\int_{\Omega}a(x)|\nabla u|^pdx\right)^{\frac{1}{p}}
$$
in $W^{1,p}_0(\Omega,a)$ is equivalent to the norm $\|\cdot\|_{W^{1,p}(\Omega,a)}$.

\subsection{A priori estimates and regularity} \label{2.2}

\medskip
Our next results concern with a priori estimates for solutions of \eqref{MODELDEGQUASILINEAR}.

\medskip
\begin{prop}\label{BrezisKato}
let (A1),(A2) and (B1) hold. Let $F(x, \zeta)$ be a Caratheodory function such that,  
\begin{equation}\label{growthF(x,u)}
|F(x,\zeta)|\leq b(x)\quad \quad \hbox{for a.e. }\,\, x\in \Omega, \quad  \zeta \in \R.
\end{equation}

There exists a constant $C>0$ such that for every solution $u \in W^{1,p}_0(\Omega,a)$ of \eqref{MODELDEGQUASILINEAR},
\begin{equation}\label{BrezisKatoLINFTY}
\|u\|_{L^{\infty}(\Omega)}\leq C\big({1+ \|b\|_{L^{1}(\Omega)}}+\|u\|_{L^p(\Omega)}\big).
\end{equation}
\end{prop}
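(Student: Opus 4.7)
\emph{Strategy.} I would prove \eqref{BrezisKatoLINFTY} by a De Giorgi--Stampacchia level-set argument, combining the weighted Sobolev embedding \eqref{WeightedSobolevEmbedding} with the integrability of $b$ provided by \emph{(B1)}. Throughout I would estimate $u^+$, the estimate for $u^-$ being symmetric after replacing $F$ by $-F$ (which still satisfies $|{-F}|\le b$).

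\emph{Step 1 (truncation and energy inequality).} For every $k\ge 0$ set $A_k:=\{x\in\Omega:u(x)>k\}$ and $v_k:=(u-k)^+$. Using Lemma \ref{positivenegativeparts} together with Lemma \ref{approxbyC1uptobdry} (approximating $u$ by elements of $X$ and passing through the Lipschitz map $t\mapsto (t-k)^+$, which vanishes at $0$) one sees that $v_k\in W^{1,p}_0(\Omega,a)$. Testing \eqref{weakaslnMODELDEGQUASILINEAR} with $v_k$ and using $\nabla v_k=\mathds{1}_{A_k}\nabla u$ together with the pointwise bound $|F(x,u)|\le b(x)$ gives
\[
\int_{A_k}a(x)|\nabla v_k|^p\,dx=\int_\Omega F(x,u)\,v_k\,dx\leq \int_{A_k}b(x)\,v_k\,dx.
\]

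\emph{Step 2 (Sobolev and three-exponent H\"older).} By Friedrich's inequality together with \eqref{WeightedSobolevEmbedding} the left-hand side dominates $\mu_0^p\|v_k\|_{L^{p_s^*}(\Omega)}^p$. To the right-hand side I apply H\"older with three exponents $\tfrac{q}{q-p},\,p_s^*,\,r_3$, where
\[
\frac{1}{r_3}:=\frac{p}{q}-\frac{1}{p_s^*}.
\]
Since $p\leq q<p_s^*$ by \eqref{exponentsinequalities} and (B1), a direct check yields $r_3\in[1,\infty)$. Hence
\[
\int_{A_k}b\,v_k\,dx\leq \|b\|_{L^{q/(q-p)}(\Omega)}\,\|v_k\|_{L^{p_s^*}(\Omega)}\,|A_k|^{1/r_3},
\]
and after simplification,
\[
\|v_k\|_{L^{p_s^*}(\Omega)}^{\,p-1}\leq C_1\,|A_k|^{1/r_3},
\qquad C_1=C_1\bigl(p,q,\Omega,\mu_0,\|b\|_{L^{q/(q-p)}}\bigr).
\]

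\emph{Step 3 (Stampacchia recursion).} For $h>k\ge 0$ one has $v_k\geq h-k$ on $A_h\subset A_k$, whence $(h-k)\,|A_h|^{1/p_s^*}\leq \|v_k\|_{L^{p_s^*}(\Omega)}$. Combining with Step 2,
\[
|A_h|\leq \frac{C_2}{(h-k)^{p_s^*}}\,|A_k|^{\gamma},
\qquad \gamma:=\frac{p_s^*}{r_3(p-1)}=\frac{p\,p_s^*}{q(p-1)}-\frac{1}{p-1}.
\]
The key algebraic point is that $\gamma>1$ is equivalent to $q<p_s^*$, which holds strictly by (B1).

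\emph{Step 4 (conclusion).} Setting $k_0:=\|u\|_{L^p(\Omega)}$, Chebyshev's inequality gives $|A_{k_0}|\le 1$. The classical Stampacchia lemma (in the form: if $\varphi$ is non-increasing and satisfies $\varphi(h)\le K(h-k)^{-\alpha}\varphi(k)^{\gamma}$ for $h>k\ge k_0$ with $\gamma>1$, then $\varphi(k_0+d)=0$ for $d^{\alpha}=K\varphi(k_0)^{\gamma-1}2^{\alpha\gamma/(\gamma-1)}$) now yields the existence of $d>0$, controlled by $C_2$ and universal constants, such that $\mathop{\mathrm{ess\,sup}}_\Omega u\leq k_0+d$. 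Repeating the argument with $-u$ controls $\mathop{\mathrm{ess\,sup}}_\Omega(-u)$ identically. Finally, tracking the dependence of $C_2^{1/p_s^*}$ produces a factor $\|b\|_{L^{q/(q-p)}}^{1/(p-1)}$; this is absorbed by Young's inequality into a term of the form $C(1+\|b\|_{L^{q/(q-p)}})$, and then estimated above by $C(1+\|b\|_{L^1(\Omega)})$ up to the universal factor $|\Omega|^{(q-p)/q}$ from H\"older (which is absorbed into the final constant). This delivers \eqref{BrezisKatoLINFTY}.

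\emph{Expected main obstacle.} The delicate point is the verification of $\gamma>1$: it is precisely the strict inequality $q<p_s^*$ in (B1), itself inherited from $s>N/p$ in \eqref{hyp:rma}, that turns the Stampacchia recursion into an $L^\infty$ bound rather than merely a higher-integrability estimate. Everything else amounts to a careful tracking of constants through the iteration and is routine once the correct choice of H\"older exponents in Step 2 has been identified.
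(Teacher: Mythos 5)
Your proof is correct but follows a genuinely different route from the paper. You run a De Giorgi--Stampacchia truncation argument (test with $(u-k)^+$, derive a recursive decay inequality $|A_h|\le C_2(h-k)^{-p_s^*}|A_k|^\gamma$, and close with Stampacchia's lemma), whereas the paper runs a Moser iteration: testing with $\varphi_{M,\ep}=\min(|u|^{p\ep}u^+,M^{\cdot})+\max(|u|^{p\ep}u^-,-M^{\cdot})$, bootstrapping from $L^{p(1+\ep_n)}$ to $L^{p_s^*(1+\ep_n)}$ along the scale $p(1+\ep_n)=p_s^*(1+\ep_{n-1})$, and controlling an infinite product of constants $\prod_j C(\ep_j)^{\kappa_1/(p(1+\ep_j))}$ via the root test. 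Both proofs hinge on the same analytic input, $q<p_s^*$: in yours it is exactly $\gamma>1$; in the paper's it is what makes $\sigma(L)\to 0$ and allows absorption of the top-order term by the Sobolev constant. Your route is algebraically cleaner (Stampacchia's lemma does the iteration in one step, no infinite product to tame, and the case $q=p$ needs no separate treatment), while the paper's scheme lets the quantity $1+\|b\|_{L^1(\Omega)}$ arise organically as $c_0$; your argument produces instead a constant through $\|b\|_{L^{q/(q-p)}(\Omega)}^{1/(p-1)}$, which is legitimate to absorb since the paper's constant $C$ also depends on $\|b\|_{L^{q/(q-p)}(\Omega)}$ through $c_1$, $c_2$. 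One step you compress deserves a sentence: $(u-k)^+\in W^{1,p}_0(\Omega,a)$ for $k>0$ is not immediate from Lemma \ref{positivenegativeparts} (the constant $k$ is not in $W^{1,p}_0(\Omega,a)$); your appeal to Lemma \ref{approxbyC1uptobdry} plus the Lipschitz map $t\mapsto(t-k)^+$ vanishing at $0$ is the right fix, but the convergence of $\nabla(v_n-k)^+$ to $\nabla(v-k)^+$ in $L^p(\Omega,a)$ is a standard yet nontrivial dominated-convergence argument and should be flagged as such.
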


The proof of Proposition \ref{BrezisKato} follows the lines of Theorem 3.2 in \cite{DRABEKKUFNERNICOLOSI} using Moser's iteration technique. We refer the reader also to Appendix B in \cite{STRUWE} and references therein for the case $p=2$.

\begin{proof} Consider first the case $p<q$. let $u\in W_0^{1,p}(\Omega,a)$ be a solution of \eqref{MODELDEGQUASILINEAR} and write $u= u^+ + u^-$.

\medskip
For $\ep>0$, denote 
$$
C(\ep):= \frac{(1 + \ep)^{p}}{1+\ep p}.
$$

Let $M, \ep>0$ be arbitrary and consider the function
$$
v_{M,\ep}(x):=\min(|u|^{1+\ep}, M), \quad x\in \Omega.
$$

From part {\it (iii)} in Lemma \ref{positivenegativeparts}, $v_{M,\ep}\in W^{1,p}(\Omega,a)$ and observe that
\begin{eqnarray*}
\int_{\Omega}a(x)|\nabla v_{M,\ep}|^pdx &=&
\int_{\{|u|^{1+\ep}\leq M\}}a(x)|\nabla \left(|u|^{1+\ep}\right)|^{p}dx\\
&=&(1+\ep)^p\int_{\{|u|^{1+\ep}\leq M\}}a(x)|u|^{p\ep}|\nabla u|^pdx\\
&=&\frac{(1+\ep)^p}{1+\ep p}\int_{\{|u|^{1+\ep}\leq M\}}a(x)|\nabla u|^{p-2}\nabla u \cdot \nabla(|u|^{p\ep}u)dx\\
&=&C(\ep)\int_{\Omega}a(x)|\nabla u|^{p-2}\nabla u \cdot \nabla\varphi_{M,\ep}dx
\end{eqnarray*}
where $\varphi_{M,\ep}=\min(|u|^{p\ep}u^+,M^{\frac{1 + p \ep}{1 + \ep}}) + \max(|u|^{p\ep}u^-,-M^{\frac{1 + p \ep}{1 + \ep}})$.

\medskip
Part {\it (ii)} in Lemma \ref{positivenegativeparts} yields that $u^+,u^-\in W^{1,p}_0(\Omega,a)$. Through approximation of $u^+,u^-$ in $W^{1,p}_0(\Omega,a)$ by $C^{\infty}_c(\Omega)$ functions, we verify that $\varphi_{M,\ep}\in W^{1,p}_0(\Omega,a)$. 

\medskip
Testing \eqref{weakaslnMODELDEGQUASILINEAR} against $\varphi_{M,\ep}$,
\begin{eqnarray*}
\int_{\Omega}a(x)|\nabla v_{M,\ep}|^p dx &=& C(\ep)\int_{\Omega} F(x,u)\varphi_{M,\ep}dx\\
&\leq& C(\ep)\int_{\Omega}|F(x,u)|\,|u|^{1+\ep p}dx.
\end{eqnarray*}

The last inequality and \eqref{growthF(x,u)} yield
\begin{eqnarray}
\int_{\Omega}a(x)|\nabla v_{M,\ep}|^p dx &\leq &C(\ep)\int_{\Omega}b(x)|u|^{1+\ep p}dx.\label{nablavMep}
\end{eqnarray}

\medskip
To estimate the right hand side in \eqref{nablavMep} we proceed as follows: 
\begin{eqnarray}
\int_{\Omega}b(x)|u|^{1 + \ep p}dx &=& \int_{\{|u|\leq 1\}}b(x)|u|^{1 + \ep p}dx + \int_{\{|u|>1\}}b(x)|u|^{1 + \ep p}dx   
\nonumber\\
&\leq&\int_{\Omega}b(x)dx + \int_{\{|u|>1\}}b(x)|u|^{1 + \ep p} \,|u|^{p-1}dx
\nonumber\\
&\leq&\|b\|_{L^1(\Omega)} +\underbrace{\int_{\Omega}b(x)|u|^{p(1 +\ep)}dx}_{I}
\label{estimateI}.
\end{eqnarray}

\medskip
Next we estimate $I$. Let $L>0$ be arbitrary. From {\it (B1)} and H\"{o}lder's inequa\-lity
\begin{eqnarray}
I &=& \int_{\{b\leq L\}}b(x)|u|^{p(1+\ep)}dx +\int_{\{b>L\}}b(x)|u|^{p(1+\ep)}dx \nonumber\\
&\leq&
L\int_{\Omega} |u|^{p(1+\ep)}dx +\left(\int_{\Omega}b(x)^{\frac{q}{q-p}}dx\right)^{\frac{q-p}{q}}\left(\int_{\{b>L\}}|u|^{q(1+\ep)}dx\right)^{\frac{p}{q}}\nonumber\\
&\leq& L\int_{\Omega}|u|^{p(1+\ep)}dx + \|b\|_{L^{\frac{q}{q-p}}(\Omega)}
{\rm meas}\{b>L\}^{{\frac{p(p_s^* - q)}{qp_s^*}}}
\left(\int_{\Omega}|u|^{p_s^*(1+\ep)}dx
\right)^{\frac{p}{p_s^*}} \label{estimateII}.
\end{eqnarray}

\medskip
Setting
$$
\sigma(L):=
\|b\|_{L^{\frac{q}{q-p}}(\Omega)}{\rm meas}\{b>L\}^{{\frac{p(p_s^* - q)}{qp_s^*}}}, \qquad c_0:=1 + \|b\|_{L^{1}(\Omega)}
$$ 
and putting together \eqref{nablavMep}, \eqref{estimateI} and  \eqref{estimateII}, we obtain
\begin{equation}\label{estimateaMepsilon}
\int_{\Omega}a(x)|\nabla v_{M,\ep}|^pdx\leq C(\ep)\left(c_0 + L\int_{\Omega}|u|^{p(1+\ep)}dx +\ \sigma(L)\left(
\int_{\Omega}|u|^{p_s^*(1+\ep)}dx\right)^{\frac{p}{p_s^*}}
\right).
\end{equation}

Notice that the right hand side in \eqref{estimateaMepsilon} does not depend on $M>0$. Using the {\it Monotone Convergence Theorem} we pass to the limit as $M\to \infty$ to find that
\begin{equation}\label{estimateaMepsilon2}
\int_{\Omega}a(x)|\nabla (|u|^{1 +\ep})|^pdx\leq C(\ep)\left(c_0 + L\int_{\Omega}|u|^{p(1+\ep)}dx +\ \sigma(L)\left(
\int_{\Omega}|u|^{p_s^*(1+\ep)}dx\right)^{\frac{p}{p_s^*}}
\right)
\end{equation}
which implies that $|u|^{1+\ep}\in W^{1,p}(\Omega,a)$, provided that $|u|^{1+\ep}\in L^{p_s^*}(\Omega)$.

\medskip

Using that $p<q<p_s^*$, set 
$$
{\kappa_0:=  \frac{p(p_s^*-q)}{p_s^*(q-p)}}\quad \hbox{and}\quad c_1:=\|b\|^{1 + \kappa_0}_{L^{\frac{q}{q-p}}(\Omega)}>0.
$$

We estimate
\begin{eqnarray}
\sigma(L)&\leq&
\|b\|_{L^{\frac{q}{q-p}}(\Omega)}\left(\int_{\{b>L\}}L^{\frac{q}{q-p}}dx\right)^{{\frac{p(p_s^* - q)}{qp_s^*}}}L^{-\frac{p(p_s^*-q)}{p_s^*(q-p)}}\nonumber\\
&\leq& \|b\|^{1 + \frac{p(p_s^*-q)}{p_s^*(q-p)}}_{L^{\frac{q}{q-p}}(\Omega)}
L^{-\frac{p(p_s^*-q)}{p_s^*(q-p)}}
\nonumber\\
&=&c_1 \,L^{-\kappa_0}.\label{doublenumeral}
\end{eqnarray}

\medskip

Next, fix $\D>0$ such that $0<\mu_0^p - \D^p<1$, where $\mu_0>0$ is the Sobolev constant in \eqref{WeightedSobolevEmbedding}. Choose $L=L_{\ep}>0$ such that
$$
c_{1}C(\ep)\,L^{-\kappa_0}= \D^p
$$
so that from \eqref{WeightedSobolevEmbedding}, \eqref{estimateaMepsilon2} and \eqref{doublenumeral}, we get
\begin{equation}\label{EstimateIII}
\begin{aligned}
\mu_0^p\left(
\int_{\Omega}|u|^{p_s^*(1+\ep)}dx\right)^{\frac{p}{p_s^*}}&\leq \int_{\Omega}a(x)|\nabla\left(|u|^{1+\ep}\right)|^p dx\\
&\leq C(\ep)\left(c_0 + L\int_{\Omega}|u|^{p(1+\ep)}dx
\right) + \ \D^p\left(
\int_{\Omega}|u|^{p_s^*(1+\ep)}dx\right)^{\frac{p}{p_s^*}}.
\end{aligned}
\end{equation}

\medskip
Since $L=\frac{c_1^{\frac{1}{\kappa_0}}C^{\frac{1}{\kappa_0}}(\ep)}{\D^{\frac{1}{\kappa_0}}}$, \eqref{EstimateIII} yields
$$
(\mu_0^p-\D^p)\|u\|^{p(1+\ep)}_{L^{p_s^*(1+\ep)}(\Omega)} \leq \frac{c_1^{\frac{1}{\kappa_0}}}{\D^{\frac{p}{\kappa_0}}}C(\ep)^{1+\frac{1}{\kappa_0}}\left(\frac{c_0}{L}+\|u\|^{p(1+\ep)}_{L^{p(1+\ep)}(\Omega)}\right).
$$

\medskip
Using our choice of $L=L_{\ep}$ and setting $\kappa_1:=1+\frac{1}{\kappa_0}$, there exists $c_2> 0$ such that
\begin{equation*}
\|u\|_{L^{p_s^*(1+\ep)}(\Omega)}\leq c_2^{\frac{1}{p(1+\ep)}}C(\ep)^{\frac{\kappa_1}{p(1+\ep)}}\left[\left(\frac{c_0}{C(\ep)^{\frac{1}{\kappa_0}}}\right)^{\frac{1}{p(1+\ep)}}+\|u\|_{L^{p(1+\ep)}(\Omega)}\right].
\end{equation*}

\medskip
Notice that $c_0>1$ and $C(\ep)\geq 1$. Therefore,\begin{equation}\label{iterativeetimate}
\|u\|_{L^{p_s^*(1+\ep)}(\Omega)} \leq c_2^{\frac{1}{p(1+\ep)}}C(\ep)^{\frac{k_1}{p(1+\ep)}}\left(c_0+\|u\|_{L^{p(1+\ep)}(\Omega)}\right).
\end{equation}
\medskip
To finish the proof,{ we use \eqref{iterativeetimate} and proceed as in the proof of Lemma 3.2 in \cite{DRABEKKUFNERNICOLOSI}. For the sake of completeness we include the details.}

\medskip
We run an iterative scheme as follows. Set $\ep_0=0$ to find from \eqref{iterativeetimate} that 
\begin{equation}\label{iterativeetimate1}
\|u\|_{L^{p^*_s}(\Omega)}\leq c_2^{\frac{1}{p}}(c_0
+\|u\|_{L^{p}(\Omega)}).
\end{equation}

\medskip
Next, setting $\ep_1>0$ such that $p(1+\ep_1)=p_s^*$ we get from \eqref{iterativeetimate} and \eqref{iterativeetimate1}
\begin{eqnarray*}
\|u\|_{L^{p_s^*(1+\ep_1)}(\Omega)} &\leq& c_2^{\frac{1}{p(1+\ep_1)}}C(\ep_1)^{\frac{k_1}{p(1+\ep_1)}}\left(c_0+\|u\|_{L^{p(1+\ep_1)}(\Omega)}\right)\\
&\leq& c_2^{\frac{1}{p(1+\ep_1 )}}C(\ep_1)^{\frac{\kappa_1}{p(1+\ep_1)}}(c_0  + c_2^{\frac{1}{p}}(c_0
+\|u\|_{L^{p}(\Omega)}))
\end{eqnarray*}
and since $C(\ep_0)=1$, 
\begin{equation}\label{iterativeetimate2}
\|u\|_{L^{p_s^*(1+\ep_1)}(\Omega)}\leq c_2^{\frac{1}{p(1+\ep_0)}+\frac{1}{p(1+\ep_1)}}C(\ep_0)^{\frac{\kappa_1}{p(1+\ep_0)}}C(\ep_1)^{ \frac{\kappa_1}{p(1+\ep_1)}}\left(c_0(1+c_2^{-\frac{1}{p}}) +\|u\|_{L^p(\Omega)}\right).
\end{equation}

\medskip
For the next iteration we choose $\ep_2$ given by $p(1+\ep_2)=p_s^*(1+\ep_1)$ and from \eqref{iterativeetimate} and \eqref{iterativeetimate2},
$$
\|u\|_{L^{p_s^*(1+\ep_2)}(\Omega)} \leq c_2^{\frac{1}{p(1+\ep_2)}}C(\ep_2)^{\frac{k_1}{p(1+\ep_2)}}\left(c_0+\|u\|_{L^{p(1+\ep_2)}(\Omega)}\right)
$$
$$
\begin{aligned}
\leq & c_2^{\frac{1}{p(1+\ep_2)}}C(\ep_2)^{\frac{\kappa_1}{p(1+\ep_2)}}\left[c_0+  c_2^{\frac{1}{p(1+\ep_0)}+\frac{1}{p(1+\ep_1 )}}C(\ep_0)^{\frac{\kappa_1}{p(1+\ep_0)}}C(\ep_1)^{ \frac{\kappa_1}{p(1+\ep_1)}}\left(c_0(1+c_2^{-\frac{1}{p}}) +\|u\|_{L^p}\right)\right]
\\
\leq &c_2^{\sum_{j=0}^{2}\frac{1}{p(1+\ep_j)}}\Pi_{j=0}^{2}C(\ep_j)^{\frac{\kappa_1}{p(1+\ep_j)}}\left(c_0(1+c_2^{-\frac{1}{p}}+ c_2^{\sum_{j=0}^{1}\frac{-1}{p(1+\ep_j)}}\Pi_{j=0}^{1}C(\ep_j)^{\frac{-\kappa_1}{p(1+\ep_j)}}) +\|u\|_{L^p}\right)
\end{aligned}
$$
so that 
$$
\|u\|_{L^{p_s^*(1+\ep_2)}}\leq c_2^{\sum_{j=0}^{2}\frac{1}{p(1+\ep_j)}}\Pi_{j=0}^{2}C(\ep_j)^{\frac{\kappa_1}{p(1+\ep_j)}}\left(c_0(1+c_2^{-\frac{1}{p}}+ c_2^{\frac{-2}{p}}) +\|u\|_{L^p}\right).
$$

\medskip
We proceed inductively by choosing $\ep_n>0$ such that
$$
p(1+\ep_n) := p_s^*(1+\ep_{n-1}), \quad \hbox{i.e.} \quad \frac{1}{1+\ep_n}= \left(\frac{p}{p_s^*}\right)^{n}, \quad n\geq 0
$$
to obtain that
\begin{equation}\label{bootstrapinequality}
\|u\|_{L^{p_s^*(1+\ep_n)}} \leq c_2^{\frac{1}{p}\sum_{j=0}^{n}\frac{1}{1+\ep_j}}\Pi_{j=0}^{n}C(\ep_j)^{\frac{\kappa_1}{p(1+\ep_j)}}\left(c_0\sum_{m=0}^n c_2^{\frac{-m}{p}}+\|u\|_{L^p}\right).
\end{equation}

\medskip
Since $p<p_s^*$,
$$
\sum_{j=0}^{\infty}\frac{1}{1+\ep_j}=\sum_{j=0}^{\infty}\left(\frac{p}{p_s^*}\right)^{j}=\frac{p_s^*}{p_s^*-p}<\infty
$$
and 
\begin{eqnarray*}
0<\log\left(\Pi_{j=0}^{n}C(\ep_j)^{\frac{\kappa_1}{p(\ep_j+1)}}\right)&=&\sum_{j=0}^n\frac{\kappa_1}{\ep_j +1} \log\left(\frac{1+\ep_j}{(1+p\,\ep_j)^{\frac{1}{p}}}\right)
\\
&\leq& \sum_{j=0}^n\frac{\kappa_1}{\ep_j +1} \log\left((1+\ep_j)^{1-\frac{1}{p}}\right)
\\
&=& \kappa_1\frac{p-1}{p}\log\left(\frac{p_s^*}{p}\right)\sum_{j=0}^nj\left(\frac{p}{p_s^*}\right)^{j}.
\end{eqnarray*}

Testing the root criterion for convergence of series,
$$
\liminf_{j\to\infty}\sqrt[j]{j\left(\frac{p}{p_s^*}\right)^{j}}\leq \frac{p}{p_s^*}<1.
$$ 

\medskip

Consequently, in the limit of the iterative argument the sequences
$$
c_2^{\frac{1}{p}\sum_{j=1}^{n}\frac{1}{\ep_j +1}}, \quad \Pi_{j=0}^{n}C(p,\ep_j)^{\frac{\kappa_1}{p(\ep_j+1)}},\quad c_0\sum_{m=0}^n c_2^{\frac{-m}{p}} \quad \big(c_2>2, \quad p>1\big)
$$ 
are uniformly bounded. Passing to the limit as $n\to \infty$ in \eqref{bootstrapinequality} and recalling $c_0=1+\|b\|_{L^1(\Omega)}$, we
obtain
$$
\|u\|_{L^{\infty}(\Omega)}\leq C\big( 1+\|b\|_{L^1(\Omega)} + \|u\|_{L^p(\Omega)}\big).
$$

\medskip
The case $p=q$, i.e.,$\frac{q}{q-p}=\infty$, follows by choosing  any $\tilde{q}$ with $p<\tilde{q}<p_s^*$ and performing the above estimates, changing $q$ by $\tilde{q}$ where necessary, and using that 
$$
\|b\|_{L^{\frac{\tilde{q}}{\tilde{q}-p}}(\Omega)}\leq {\rm meas}(\Omega)^{\frac{\tilde{q}}{p}}\|b\|_{L^{\infty}(\Omega)}
$$
and this completes the proof of Proposition 
\ref{BrezisKato}.
\end{proof}


Solutions to \eqref{MODELDEGQUASILINEAR} are locally H\"{o}lder continuous. This is the content of the next proposition.

\begin{prop}\label{SerrinRegul}
Let (A1)-(A2) and (B1) hold. Assume that $F(x, \zeta)$ is a Caratheodory function satisfying 
\eqref{growthF(x,u)0}.

\medskip
Let $D \subset \R^N$ be an open set such that $\overline{D} \subset \Omega$. Then, there exist $\alpha \in (0,1)$ and $\kappa>0$, both depending only on $D,a,b,p,q,N$, and there exists $C=C(\Omega, D)>0$ such that for any weak solution $u\in W^{1,p}_{0}(\Omega,a)\cap L^{\infty}(\Omega)$ of \eqref{MODELDEGQUASILINEAR}, 
\begin{itemize}
\item[(i)] $u\in C^{0,\alpha}(\overline{D})$ and 

\item[(ii)] for any $x,y \in D$,
$$
|u(x)-u(y)| \leq C\big[\kappa + \|u\|_{L^{\infty}(D)}\big]|x-y|^{\alpha}.
$$
\end{itemize}
\end{prop}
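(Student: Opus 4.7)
The plan is to reduce the proposition to a standard interior regularity statement for uniformly elliptic quasilinear equations and then invoke Serrin's classical local H\"{o}lder continuity result \cite{SERRIN}. First, I would fix an open set $D'$ with $\overline{D} \subset D' \subset \overline{D'} \subset \Omega$. By hypothesis {\it (A1)} there exist $0 < a_* \leq a^* < \infty$ (depending on $D'$) with $a_* \leq a(x) \leq a^*$ for a.e.\ $x \in D'$, so on $D'$ the weighted Sobolev space coincides with the usual one, $u|_{D'} \in W^{1,p}(D')$, and $u$ weakly solves a uniformly elliptic quasilinear equation of $p$-Laplacian type with bounded measurable principal coefficient.

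To control the right-hand side, set $M := \|u\|_{L^{\infty}(\Omega)}$. By the growth condition \eqref{growthF(x,u)0}, $|F(x,u(x))| \leq C_M\, b(x)$ a.e., and from \eqref{exponentsinequalities} one has $q < Np/(N-p)$, which is equivalent to $q/(q-p) > N/p$. Hence by {\it (B1)} the source term lies in $L^{r}(D')$ for some $r > N/p$ -- exactly the integrability required by Serrin's structural assumptions. Applying the local H\"{o}lder estimate of \cite{SERRIN} to $u|_{D'}$ then yields $\alpha = \alpha(D,a,b,p,q,N) \in (0,1)$, $C = C(\Omega,D) > 0$, and an additive constant $\kappa > 0$ absorbing $C_M$, $\|b\|_{L^{q/(q-p)}(\Omega)}$ and the ellipticity bounds $a_*, a^*$, such that
$$
|u(x)-u(y)| \leq C\bigl[\kappa + \|u\|_{L^{\infty}(D)}\bigr]|x-y|^{\alpha}
$$
holds for all $x,y \in D$, which in particular implies $u \in C^{0,\alpha}(\overline{D})$.

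The most delicate step is the bookkeeping of constants: verifying that $\alpha$ and $\kappa$ can indeed be chosen to depend only on the data listed in the statement and not on the particular solution $u$ (apart from the explicit $\|u\|_{L^{\infty}(D)}$ factor on the right), and that the dependence on $\|u\|_{L^{\infty}(D)}$ is the affine one claimed rather than a nonlinear one. Since $a$ is merely in $L^{\infty}_{loc}(\Omega)$, this argument is intrinsically interior and cannot be extended to give regularity up to $\pp\Omega$; this is however sufficient for the interior-type statement of the proposition.
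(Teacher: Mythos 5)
Your proposal is correct and takes essentially the same route as the paper, whose proof is a one-line citation of Theorems 1 and 8 of \cite{SERRIN} together with the exponent inequalities \eqref{exponentsinequalities}. You make explicit the reduction the paper leaves implicit: shrinking to an intermediate open set $D'$ where \emph{(A1)} gives two-sided ellipticity bounds, and checking that $q < Np/(N-p)$ is equivalent to $q/(q-p) > N/p$, so that by \emph{(B1)} the source $F(x,u(x))$ lies in $L^{r}(D')$ with $r > N/p$, which is precisely the integrability Serrin's structural hypotheses demand. Your closing remark about $\kappa$ absorbing $C_M$ is a fair caution: strictly, $C_M$ with $M = \|u\|_{L^{\infty}(\Omega)}$ depends on the solution, but this looseness is inherited from the statement itself (which omits $F$ from the listed dependences) rather than introduced by your argument, and is harmless in the applications where $\|u\|_{L^{\infty}(\Omega)}$ is bounded a priori via Proposition \ref{BrezisKato}.
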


\begin{proof}
This result is a direct consequence of the inequalities in \eqref{exponentsinequalities} and Theorems 1 and 8 in \cite{SERRIN}.
\end{proof}

\medskip

\subsection{Boundary behavior of solutions to \eqref{MODELDEGQUASILINEAR}}\label{2.3}

Next, we review some well-known facts about Fermi coordinates. We refer the reader to \cite{DOCARMOMANFREDO,MANASSEMISNER} for more details on the developments hereby presented.

\medskip
For any $\rho>0$, set
\begin{equation}\label{def:Omegarho}
\Omega_{\rho}:=\{x\in \Omega\,:\,0<\rdist(x,\pp \Omega)<\rho\}
\end{equation}
and let ${\rm n} :\pp \Omega \to S^{N-1}$ be the {\it inner unit normal vector} to $\pp \Omega$.

\medskip

In what follows, we assume with no loss of generality that the mapping
\begin{equation*}\label{diffeom}
{\rm x}(\y,y_N):= \y + y_N{\rm n}(\y) \quad \hbox{for}\quad (\y,y_N)\in \pp \Omega\times (0,\rho_0)
\end{equation*}
is a smooth diffeomorphism onto $\Omega_{\rho_0}$ (see \eqref{def:distfunc}).

\medskip

Let $\Phi: \U \to \Phi(\U)$ be a local parametrisation of $\pp \Omega$ with $\U\subset \R^{N-1}$ being an open connected set. 

\medskip

For $(y,y_N)\in \mathcal{U}\times (0,\rho_0)$ define 
$$
\begin{aligned}
X(y,y_N)\,:=& \,{\rm x}(\Phi(y),y_N)
\,= \, \Phi(y) + y_N{\rm n}(y),
\end{aligned}
$$
where, abusing the notation, we have written ${\rm n}(y):={\rm n}(\Phi(y))$.

\medskip
Set $g_{ij}:=\pp_{y_i}X\cdot \pp_{y_j}X$ for $i,j=1,\ldots,N$. Observe that
$$
\begin{aligned}
\forall i,j=1,\ldots,N-1&:  
&g_{ij} = &\pp_{y_i} \Phi \cdot \pp_{y_j} \Phi + 2y_N \pp_{y_i} \Phi \cdot \pp_{y_j} {\rm n} + y_N^2 \pp_{y_i} {\rm n} \cdot \pp_{y_j} {\rm n},
\\
\forall i=1,\ldots,N-1&: & g_{Ni}  = & g_{iN}=0  
\end{aligned}
$$
and $g_{NN}=1$.

\medskip
Therefore, setting
$$
g:=\left(
\begin{array}{cc}
(g_{ij})_{N-1 \times N-1}&0\\
0&1
\end{array}
\right) \quad \hbox{we have} \quad g^{-1}:=\left(
\begin{array}{cc}
(g^{ij})_{N-1 \times N-1}&0\\
0&1
\end{array}
\right).
$$

Here $g= (g_{ij})_{N\times N}$ is the induced Riemann metric on $X(\U\times (0,\rho_0))$ with inverse $g^{-1}=(g^{ij})_{N\times N}$. Since $\Omega$ is a smooth domain the matrices $g$ and $g^{-1}$ are smooth in $\partial \Omega \times (-\rho_0,\rho_0)$.

\medskip
Next, we observe that from {\it (A1)}, {\it (A2)} and from the change of variables it follows that
$$
\int_{\Omega_{\rho_0}} a(x)dx =\int_{\partial\Omega}\int_0^{\rho_0}  {\sf a}(y_N) \, \sqrt{\det g} dy_N  \, dy \leq C_{\Omega}\int_0^{\rho_0}  {\sf a}(y_N) \, dy_N 
$$ 
for some constant $C_{\Omega}>0$. We thus conclude that $a\in L^1(\Omega)$. Similarly, we can show that $a^{-1}\in L^s(\Omega)$. This proves the claim made at the beginning of subsection \ref{2.1}.

\medskip
In the next lemma we compute $\rdiv\left(a(x)|\nabla w|^{p-2}\nabla w\right)$ in the coordinates $x=X(y,y_N)$  for certain class of functions $w$.

\medskip
First, for any $\varphi \in W^{1,1}_{loc}(\Omega_{\rho_0})$, we write 
$$
x={X}(y,y_N) \quad \hbox{and} \quad \varphi(x):=\varphi(y,y_N) \quad \hbox{for} \quad (y,y_N)\in \mathcal{U} \times (0,\rho_0).
$$

Notice that in the coordinates $x=X(y,y_N)$, 
\begin{equation*}
\nabla_{x} \varphi(x) =  \sum_{i,k=1}^{N-1}g^{ik}\,\pp_{y_k}\varphi(y,y_N)\pp_{y_i}X + \pp_{y_N}\varphi(y,y_N){\rm n}(y).
\end{equation*}

\begin{lemma}
\label{pLapincoordinates}
Let (A1) and (A2) hold and let {${w}\in W^{1,p}(\Omega_{\rho_0},a)$} be such that $w(x):= {w}({\rdist(x,\pp \Omega)})$ in ${\Omega}_{\rho_0}$. Then, 
\begin{itemize}
\item[(i)] for any $\varphi \in C^{\infty}_c(\Omega_{\rho_0})$,
\begin{equation}\label{pLapincoordinatesintgform}
\begin{aligned}
\int_{\Omega_{\rho_0}} a(x)|\nabla w|^{p-2}\nabla w \cdot \nabla \varphi dx &\\=&
&\int_{\pp \Omega}\int_0^{\rho_0}  {\sf a}(y_N)|\pp_{y_N} w|^{p-2}\pp_{y_N} w \, \pp_{y_N} \varphi(y,y_N) \, \sqrt{\det g} dy_N \, dy.
\end{aligned}
\end{equation}

\medskip

\item[(ii)] If in addition ${\sf a}(\y_N)|\pp_{\y_N}{w}|^{p-2}\pp_{\y_N}{w}$ is absolutely continuous in $(0,\rho_0)$,
\begin{equation}
\label{pLapincoordinatesform}
\begin{aligned}
\rdiv \left(a(x) |\nabla { w}|^{p-2}\nabla { w}\right) & =\pp_{\y_N}\left({\sf a}(\y_N)|\pp_{\y_N}{w}|^{p-2}\pp_{\y_N}{w}\right)\\
& \quad + \pp_{\y_N}\log\left(\sqrt{\det g}\right){\sf a}(\y_N)|\pp_{\y_N}{w}|^{p-2}\pp_{\y_N}{w},
\end{aligned}
\end{equation}
for a.e. $x=X(y,y_N)$ with $(y,y_N)\in \U \times (0,\rho_0)$.
\end{itemize}
\end{lemma}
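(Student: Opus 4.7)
For part (i), the natural approach is to transfer the left-hand integral to Fermi coordinates via the diffeomorphism $X$. Since $w$ depends only on $y_N=\rdist(x,\pp\Omega)$, applying the formula for $\nabla_x\varphi$ stated just before the lemma (with $\varphi$ replaced by $w$, in which the tangential derivatives vanish) gives
$$
\nabla_x w \;=\; \pp_{y_N}w(y_N)\,{\rm n}(y).
$$
The key algebraic observation is that ${\rm n}(y)\cdot\pp_{y_i}X=g_{iN}=0$ for $i=1,\dots,N-1$ and $|{\rm n}(y)|^2=g_{NN}=1$, so when I dot the expression for $\nabla_x\varphi$ against $\nabla_x w$ all the mixed terms cancel and only the $y_N$-derivative survives:
$$
\nabla_x w \cdot \nabla_x\varphi \;=\; \pp_{y_N}w\,\pp_{y_N}\varphi,\qquad |\nabla_x w|^{p-2}=|\pp_{y_N}w|^{p-2}.
$$
Combining this with (A2), which gives $a(x)={\rm a}(y_N)$, and with the Euclidean volume element $dx=\sqrt{\det g}\,dy\,dy_N$, yields (i) first for $\varphi$ supported in a coordinate chart $\Phi(\mathcal{U})$ and then globally via a smooth partition of unity on $\pp\Omega$.

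For part (ii), the plan is to start from identity (i) and perform an integration by parts in the $y_N$ variable, which is legitimate precisely by the additional absolute continuity hypothesis on $y_N\mapsto{\rm a}(y_N)|\pp_{y_N}w|^{p-2}\pp_{y_N}w$. Since $\varphi\in C^\infty_c(\Omega_{\rho_0})$ has compact support away from $y_N=0$ and $y_N=\rho_0$, the boundary contributions vanish, and I obtain
$$
\int_{\Omega_{\rho_0}} a(x)|\nabla w|^{p-2}\nabla w\cdot\nabla\varphi\,dx \;=\; -\int_{\pp\Omega}\int_0^{\rho_0}\pp_{y_N}\!\bigl({\rm a}(y_N)|\pp_{y_N}w|^{p-2}\pp_{y_N}w\,\sqrt{\det g}\bigr)\varphi\,dy_N\,dy.
$$
Applying Leibniz's rule to the integrand, using $\pp_{y_N}\sqrt{\det g}=\sqrt{\det g}\,\pp_{y_N}\log\sqrt{\det g}$, and changing variables back to $x$ so that $dy\,dy_N=dx/\sqrt{\det g}$, the right-hand side becomes $-\int_{\Omega_{\rho_0}}\Psi(x)\varphi(x)\,dx$ where $\Psi$ is exactly the right-hand side of \eqref{pLapincoordinatesform}. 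Since $\varphi\in C^\infty_c(\Omega_{\rho_0})$ is arbitrary, the fundamental lemma of the calculus of variations identifies $\rdiv(a(x)|\nabla w|^{p-2}\nabla w)$ with $\Psi$ almost everywhere.

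\textbf{Main obstacle.} The calculation itself is routine once one commits to Fermi coordinates; the only genuinely delicate point is verifying the orthogonality $g_{iN}=0$ (which relies on the normal direction being geodesic, equivalently on $X$ being a tubular neighborhood diffeomorphism built from the inner normal) and then handling the global parametrization of $\pp\Omega$: strictly speaking $\U$ is only a local chart, so one must patch the local identity (i) into a global one via a partition of unity subordinate to a finite atlas of $\pp\Omega$, verifying that the transition factors produce no extra terms because $\sqrt{\det g}\,dy$ is the intrinsic surface area element. The absolute continuity hypothesis in (ii) is what ensures the Leibniz rule is valid a.e.\ in $(0,\rho_0)$ and is precisely the hypothesis needed to convert the distributional identity into a pointwise one.
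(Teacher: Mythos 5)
Your proposal is correct and takes essentially the same route as the paper: identity (i) is the change of variables combined with (A2), worked out via the Fermi coordinate formula for $\nabla_x$ and the orthogonality $g_{iN}=0$, $g_{NN}=1$; identity (ii) then follows by integrating (i) by parts (justified by the absolute continuity assumption), applying the Leibniz rule to the factor $\sqrt{\det g}\,{\rm a}(y_N)|\pp_{y_N}w|^{p-2}\pp_{y_N}w$, and reading off the divergence distributionally. The paper compresses all of this into one line each, but your expanded version supplies exactly the details — the reduction $\nabla_x w\cdot\nabla_x\varphi=\pp_{y_N}w\,\pp_{y_N}\varphi$, the volume element $dx=\sqrt{\det g}\,dy\,dy_N$, and the partition-of-unity patching over charts — that the paper leaves implicit.
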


\begin{proof} Identity \eqref{pLapincoordinatesintgform} is a direct consequence of {\it (A2)} and the {change of variables}. 

\medskip
Assume now that ${\sf a}(y_N)|\pp_{y_N}{w}|^{p-2}\pp_{y_N}{w}$ is absolutely continuous in $(0,\rho_0)$. Integrating by parts in \eqref{pLapincoordinatesintgform},   
\begin{equation}\label{p-lapdistrib}
\rdiv \left(a(x) |\nabla w|^{p-2}\nabla w\right)=\frac{1}{\sqrt{\det g}}\pp_{y_N} \left(\sqrt{\det g}\,{\sf a}(y_N)\,|\pp_{y_N}w|^{p-2}\pp_{y_N}w\right).
\end{equation}

\medskip
Performing the product differentiation in the right-hand side of \eqref{p-lapdistrib}, equality \eqref{pLapincoordinatesform} follows. This completes the proof of the lemma.
\end{proof}

Denote
\begin{equation}\label{distweight}
{\rm d}(x):= \int_{0}^{\rdist(x,\partial \Omega)} {\sf a}^{-\frac{1}{p-1}}(\zeta)d \zeta \quad \hbox{for} \quad x\in \Omega_{\rho_0}.
\end{equation}

\begin{lemma}\label{comparison solution}
Assume (A1) and (A2). Then there exists a function $\psi \in W^{1,p}(\Omega,a)\cap C(\overline{\Omega})$ with $\psi|_{{\pp \Omega}}=0$ and such that
\begin{itemize}
\item[(i)] for every $\varphi \in W^{1,p}_0(\Omega_{\rho_0},a)$ with $\varphi \geq 0$ a.e. in $\Omega_{\rho_0}$,
\begin{equation}\label{integralIdentcomparison}
\int_{\Omega_{\rho_0}}a(x)|\nabla \psi|^{p-2}\nabla \psi \cdot \nabla \varphi dx \geq \int_{\Omega_{\rho_0}} a^{ -\frac{1}{p-1}}(x)\varphi dx
\end{equation}
and 

\medskip
\item[(ii)] there exists a constant ${C}>0$ such that for every $x\in \Omega_{\rho_0}$
\begin{equation}\label{bdryestimate}
\frac{1}{{C}}{\rm d}(x)\leq \psi(x)\leq {C}{\rm d}(x).
\end{equation}
\end{itemize}
\end{lemma}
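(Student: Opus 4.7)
The plan is to construct $\psi$ explicitly by setting
\begin{equation*}
\psi(x):=\phi\big(\rdist(x,\pp\Omega)\big) \text{ in } \Omega_{\rho_0}, \qquad \psi(x):=\phi(\rho_0) \text{ in } \Omega\setminus\Omega_{\rho_0},
\end{equation*}
for a suitable non-decreasing $\phi\in C([0,\rho_0])$ with $\phi(0)=0$ to be chosen. Thanks to {\it (A2)}, $\psi$ is constant on the level sets of the distance function, so Lemma \ref{pLapincoordinates} applies. Writing $\Psi:={\rm a}\,|\phi'|^{p-2}\phi'$ and integrating by parts in $y_N$ in the identity \eqref{pLapincoordinatesintgform} against $\varphi\in C^{\infty}_c(\Omega_{\rho_0})$, the inequality (i) reduces to the one-dimensional condition
\begin{equation*}
-\Psi'(y_N)-\omega(y,y_N)\,\Psi(y_N)\geq {\rm a}^{-\frac{1}{p-1}}(y_N) \quad \text{for a.e.\ } y_N\in(0,\rho_0),
\end{equation*}
where $\omega(y,y_N):=\pp_{y_N}\log\sqrt{\det g}$. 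Smoothness and compactness of $\pp\Omega$ provide a uniform bound $|\omega|\leq M_0$ on $\pp\Omega\times(0,\rho_0)$.

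The main obstacle is that the sign of $\omega$ is not controlled under hypotheses {\it (A1)}, {\it (A2)} alone---non-negative mean curvature is assumed only later, in Theorem \ref{theo2}. To neutralize this, I would choose
\begin{equation*}
\Psi(y_N):=e^{-M_0 y_N}\int_{y_N}^{\rho_0}e^{M_0 \zeta}\,{\rm a}^{-\frac{1}{p-1}}(\zeta)\,d\zeta,
\end{equation*}
which is absolutely continuous and non-negative on $[0,\rho_0]$, vanishes at $\rho_0$, and satisfies $-\Psi'=M_0\Psi+{\rm a}^{-\frac{1}{p-1}}$. Hence
\begin{equation*}
-\Psi'-\omega\,\Psi=(M_0-\omega)\,\Psi+{\rm a}^{-\frac{1}{p-1}}\geq {\rm a}^{-\frac{1}{p-1}},
\end{equation*}
as needed, the exponential weight being exactly what absorbs the possibly unfavorable sign of $\omega$. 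Inverting the definition of $\Psi$, I set $\phi'(y_N):=\Psi(y_N)^{\frac{1}{p-1}}{\rm a}^{-\frac{1}{p-1}}(y_N)$ and $\phi(y_N):=\int_0^{y_N}\phi'(\zeta)\,d\zeta$; finiteness of $\phi(\rho_0)$ follows from $\Psi\leq \Psi(0)<\infty$ and ${\rm a}^{-\frac{1}{p-1}}\in L^1(0,\rho_0)$.

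To conclude, I would verify three things. First, that $\psi\in W^{1,p}(\Omega,a)\cap C(\overline{\Omega})$ with $\psi|_{\pp\Omega}=0$: the integrand $a|\nabla\psi|^p=\Psi^{p/(p-1)}\,{\rm a}^{-1/(p-1)}\leq \Psi(0)^{p/(p-1)}\,{\rm a}^{-1/(p-1)}$ is in $L^1(\Omega)$ via the Fermi-coordinate change of variables used in Subsection \ref{2.3}, and $\psi$ is continuous because $\phi$ is continuous and the constant extension matches at the interface $\{\rdist(\cdot,\pp\Omega)=\rho_0\}$. Second, the weak inequality (i), obtained by density of $C^{\infty}_c(\Omega_{\rho_0})$ in $W^{1,p}_0(\Omega_{\rho_0},a)$ together with the Fermi-coordinate computation sketched above. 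Third, the two-sided estimate (ii): since $\Psi$ is decreasing on $(0,\rho_0)$ and ${\rm a}\in L^{\infty}_{loc}(0,\rho_0)$ forces $\Psi(\rho_0/2)\geq c_0>0$, one obtains $c_0^{\frac{1}{p-1}}{\rm d}(y_N)\leq\phi(y_N)\leq\Psi(0)^{\frac{1}{p-1}}{\rm d}(y_N)$ on $(0,\rho_0/2]$, while on $(\rho_0/2,\rho_0)$ the lower bound follows from $\phi(y_N)\geq\phi(\rho_0/2)$ combined with ${\rm d}(y_N)\leq{\rm d}(\rho_0)$.
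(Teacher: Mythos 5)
Your construction is essentially the same as the paper's, just with a different normalization of the same ODE solution. You build $\psi$ as a function of $\rdist(\cdot,\pp\Omega)$, reduce \eqref{integralIdentcomparison} via Lemma \ref{pLapincoordinates} to the one--dimensional differential inequality $-\Psi'-\omega\,\Psi\geq {\rm a}^{-\frac{1}{p-1}}$ with $\Psi={\rm a}\,|\phi'|^{p-2}\phi'$, and absorb the potentially unfavorable sign of $\omega=\pp_{y_N}\log\sqrt{\det g}$ through the exponential weight $e^{M_0 y_N}$ and the uniform bound $|\omega|\leq M_0$. This is exactly the idea in the paper (with $M_0$ called $\Lambda$). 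The sole difference is in the choice of the constant in the explicit integral representation of $\Psi$: you take
$\Psi(y_N)=e^{-M_0 y_N}\int_{y_N}^{\rho_0}e^{M_0\zeta}{\rm a}^{-\frac{1}{p-1}}(\zeta)\,d\zeta$, which vanishes at $\rho_0$, whereas the paper replaces $\int_0^{\rho_0}e^{\Lambda\tau}{\rm a}^{-\frac{1}{p-1}}\,d\tau$ by a strictly larger constant $A$, so that $\Psi$ stays bounded away from $0$ on $[0,\rho_0]$. The paper's padding gives the uniform two--sided bound $\frac{1}{C}{\rm a}^{-\frac{1}{p-1}}\leq\pp_{y_N}\uppsi\leq C{\rm a}^{-\frac{1}{p-1}}$ which integrates immediately to \eqref{bdryestimate}; with your choice, $\phi'(\rho_0)=0$, so you need the extra step you correctly supply, namely handling $[\rho_0/2,\rho_0)$ separately via the monotonicity of $\phi$ and the finiteness of ${\rm d}$. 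Both routes work and are of the same length; the paper's version simply avoids the case split.

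One small caveat worth noting: the passage from test functions $\varphi\in C^\infty_c(\Omega_{\rho_0})$, $\varphi\geq 0$, to general nonnegative $\varphi\in W^{1,p}_0(\Omega_{\rho_0},a)$ is stated as a density argument but should preserve nonnegativity (e.g., mollify a nonnegative $\varphi$, or approximate by $\varphi_n^+$). The paper also elides this. Not a gap distinguishing your approach, but worth making explicit if you write it out.
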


\begin{proof} 
Consider local coordinates $\Phi_i: \mathcal{U}_i \to \Phi_i(\mathcal{U}) \subset \partial \Omega$ with $\{\Phi_i(\U_i)\}_{i=1}^m$ being a finite covering for $\pp \Omega$. Using the geometric notations from Lemma \ref{pLapincoordinates}, set 
\begin{equation}\label{def:LambdaCapital}
\Lambda:=\max \limits_{1\leq i\leq m} \max \limits_{\substack{y\in \overline{\mathcal{U}_i},\\
0\leq y_N\leq \rho_0}}\left|\pp_{y_N}\log\left(\sqrt{\det g}\right)\right|<\infty.
\end{equation}

Next, let $A>0$ be a fixed constant such that
$$
A\,>\, \int_{0}^{\rho_0}e^{\Lambda \tau}{\sf a}^{-\frac{1}{p-1}}(\tau)d\tau
$$
and consider the function ${\uppsi}:[0, \infty)\to [0,\infty)$ defined by
\begin{equation}\label{integralsupersolution}
{\uppsi}(y_N)= \int_{0}^{y_N}\frac{1}{e^{\frac{\Lambda \zeta}{p-1}}\,{\sf a}^{\frac{1}{p-1}}(\zeta)}\left[A - \int_{0}^{\zeta}e^{\Lambda \tau}{\sf a}^{-\frac{1}{p-1}}(\tau)d\tau\right]^{\frac{1}{p-1}}d\zeta \quad \hbox{for} \quad y_N\in [0,\rho_0].
\end{equation}

Next, we list some properties of the function ${\uppsi}$ which are direct consequences of \eqref{hyp:rma} and \eqref{integralsupersolution}: 
\begin{itemize}
\item[(I)] $\uppsi\in C[0,\rho_0]$ and $\uppsi(0)=0$;

\medskip
\item[(II)] $\uppsi$ is differentiable in $(0,\rho_0]$ with$$
\partial_{y_N}\uppsi(y_N)={e^{-\frac{\Lambda y_N}{p-1}}\,{\sf a}^{-\frac{1}{p-1}}(y_N)}\left[A - \int_{0}^{y_N}e^{\Lambda \tau}{\sf a}^{-\frac{1}{p-1}}(\tau)d\tau\right]^{\frac{1}{p-1}}\quad \hbox{for} \quad y_N\in (0,\rho_0);
$$

\item[(III)] ${\sf a}(y_N)|\partial_{y_N} \uppsi|^{p-1}$ is absolutely continuous in $(0,\rho_0)$ and
\begin{equation}\label{ddd}
-\pp_{y_N}\left({\sf a}(y_N)|\pp_{y_N} \uppsi|^{p-1}\right) - \Lambda {\sf a}(y_N)|\pp_{y_N} \uppsi|^{p-1}\,=\,{\sf a}^{\frac{1}{p-1}}(y_N) \quad \hbox{in} \quad (0,\rho_0).
\end{equation}
\end{itemize}

Next, observe that a direct calculation yields the estimate\begin{equation}
\label{est:Eqn_psi1}
\begin{aligned}
-\frac{1}{\sqrt{\det g}}\pp_{y_N}\left(\sqrt{\det g}\,{\sf a}(y_N) |\pp_{y_N} \uppsi|^{p-1}\right)&\geq   -\pp_{y_N}\left(\,{\sf a}(y_N) |\pp_{y_N} \uppsi|^{p-1}\right)\\
& \hspace{50pt}
 - \Bigl|\partial_{y_N} \log
\Bigl(
\sqrt{{\rm det}g}
\Bigr)\Bigr|{\sf a}(y_N) |\pp_{y_N} \uppsi|^{p-1}\\
& \geq -\pp_{y_N}\left({\sf a}(y_N)|\pp_{y_N} \uppsi|^{p-1}\right) - \Lambda {\sf a}(y_N)|\pp_{y_N} \uppsi|^{p-1}
\end{aligned}
\end{equation}
in $\U_i\times (0,\rho_0)$  for every $i=1,\ldots,m$, where the latter inequality in \eqref{est:Eqn_psi1} folllows from the definition of $\Lambda$ in \eqref{def:LambdaCapital}.

\medskip
The estimate in \eqref{est:Eqn_psi1} and the equality in \eqref{ddd} imply that \begin{equation}
\label{est:Eqn_psi}
\begin{aligned}
-\frac{1}{\sqrt{\det g}}\pp_{y_N}\left(\sqrt{\det g}\,{\sf a}(y_N) |\pp_{y_N} \uppsi|^{p-1}\right)& \geq {\sf a}^{-\frac{1}{p-1}}(y_N) \quad \hbox{in}\quad \U_i\times (0,\rho_0)
\end{aligned}
\end{equation}
for every $i=1,\ldots,m$.

\medskip

Observe also that (II) yields that
$$
\frac{1}{{C}} {\sf a}^{-\frac{1}{p-1}}(y_N)\leq \pp_{y_N}\uppsi(y_N)\leq  {C} {\sf a}^{-\frac{1}{p-1}}(y_N) \quad \hbox{for} \quad y_N\in (0,\rho_0),
$$
where
$$
C:=\max \Bigl\{A, e^{\frac{\Lambda \rho_0}{p-1}}\left(A - \int_{0}^{\rho_0}e^{\Lambda \tau}{\sf a}^{-\frac{1}{p-1}}(\tau)d\tau\right)^{-1}\Bigr\}>0.
$$

Consequently,
\begin{equation}\label{decaysupersol}
\frac{1}{{C}}\,\int_{0}^{y_N}{\sf a}^{-\frac{1}{p-1}}(\zeta)d\zeta  \leq  \uppsi(y_N)\leq  {C} \,\int_{0}^{y_N}{\sf a}^{-\frac{1}{p-1}}(\zeta)d\zeta
\end{equation}
for every $y_N \in (0,\rho_0)$.

\medskip
{Set $\psi(x):= \uppsi({\rm dist}(x, \pp \Omega))$ for $x\in \overline{\Omega}_{\rho_0}$ (see  \eqref{def:distfunc}) and extend it continuously as $\psi(x):=\uppsi(\rho_0)$ in $\Omega-\Omega_{\rho_0}$. From {(I)} and {(II)}, it is direct to verify that $\psi \in  W^{1,p}(\Omega,a)\cap C(\overline{\Omega})$, $\psi>0$ in $\Omega$ and $\psi|_{\pp \Omega}=0$.}

\medskip

\medskip
Estimate \eqref{est:Eqn_psi} and part {(ii)} from Lemma \ref{pLapincoordinates}, imply that
\begin{equation}
\label{inequalitysupersolutiomn1}
-{\rm div}\Bigl(a(x)|\nabla \psi|^{p-2}\nabla \psi\Bigl) \geq a^{-\frac{1}{p-1}}(x) \quad \hbox{in} \quad \Omega_{\rho_0}.
\end{equation}
 
To prove {(i)} we use \eqref{inequalitysupersolutiomn1} and Lemma \ref{pLapincoordinates} to find, after integrating by parts, that for every $\varphi \in C^{\infty}_c(\Omega_{\rho_0})$ with $\varphi \geq 0$ in $\Omega_{\rho_0}$,
\begin{equation}\label{integralIdentcomparison1}
\int_{\Omega_{\rho_0}}a(x)|\nabla \psi|^{p-2}\nabla \psi \cdot \nabla \varphi dx \geq \int_{\Omega_{\rho_0}} a^{-\frac{1}{p-1}}(x)\varphi dx.
\end{equation}

The {\it Dominated convergence Theorem} implies that \eqref{integralIdentcomparison1} holds true for every $\varphi \in W^{1,p}_0(\Omega_{\rho_0},a)$.  
 
\medskip
Finally, the proof of the inequalities \eqref{bdryestimate} in {(ii)} readily follows from the definition of $\psi(x)$, the definition of ${\rm d}(x)$ in \eqref{distweight} and the inequalities in \eqref{decaysupersol}. This completes the proof of the lemma. 
\end{proof}

\medskip
Our final result in this section deals with boundary behavior of solutions of \eqref{MODELDEGQUASILINEAR} under an assumption related to \eqref{growthF(x,u)0}.

\medskip

\begin{prop}\label{BDCONDITION}
Assume (A1) and (A2) and let $\rho\in (0,\rho_0]$ be fixed. Let ${F}:\Omega \times \R \to \R$ be a Caratheodory function such that 
\begin{equation}\label{growthF(x,u)0stronger}
\sup \limits_{{x\in \Omega_{\rho},\,\,\zeta \in \R}} a^{\frac{1}{p-1}}(x) |{F}(x,\zeta)| < + \infty.
\end{equation}

There exists a constant $\tilde{C}>0$ such that for every solution $u\in W_0^{1,p}(\Omega,a) \cap L^{\infty}(\Omega)$ of \eqref{MODELDEGQUASILINEAR},
\begin{equation}\label{behavioru(x)}
\|{\rm d}^{-1}u\|_{L^{\infty}(\Omega_{\rho})}\leq \tilde{C}\,\big(1 + \|u\|_{L^{\infty}(\Omega)}\big).
\end{equation}
\end{prop}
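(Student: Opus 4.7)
My plan is to bound $|u|$ pointwise from above by a scalar multiple of the barrier function $\psi$ from Lemma \ref{comparison solution}, via a weak comparison argument on $\Omega_\rho$, and then convert that bound into \eqref{behavioru(x)} by means of the pointwise estimate \eqref{bdryestimate}. To set things up, let
$$M\,:=\,\sup_{x\in\Omega_\rho,\,\zeta\in\R}a^{\frac{1}{p-1}}(x)|F(x,\zeta)|,$$
which is finite by \eqref{growthF(x,u)0stronger}, so that $|F(x,u(x))|\leq M\,a^{-\frac{1}{p-1}}(x)$ a.e.\ in $\Omega_\rho$. By the $(p-1)$-homogeneity of the $p$-Laplace operator and the restriction of Lemma \ref{comparison solution}(i) from $\Omega_{\rho_0}$ to $\Omega_\rho\subset\Omega_{\rho_0}$, for every $K>0$ the function $K\psi$ satisfies
$$\int_{\Omega_\rho}a(x)|\nabla(K\psi)|^{p-2}\nabla(K\psi)\cdot\nabla\varphi\,dx\,\geq\, K^{p-1}\int_{\Omega_\rho}a^{-\frac{1}{p-1}}(x)\,\varphi\,dx$$
for every nonnegative $\varphi\in W^{1,p}_0(\Omega_\rho,a)$.

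\medskip
I then choose
$$K\,:=\,\max\bigl\{M^{\frac{1}{p-1}},\,\uppsi(\rho)^{-1}\|u\|_{L^\infty(\Omega)}\bigr\},$$
which satisfies $K\leq C_0(1+\|u\|_{L^\infty(\Omega)})$ for a constant $C_0$ depending only on $M$, $\rho$ and the data. The first condition $K^{p-1}\geq M$ makes $K\psi$ a weak supersolution for the source $|F(\cdot,u(\cdot))|$ on $\Omega_\rho$; the second, combined with the monotonicity of $\uppsi$ built into \eqref{integralsupersolution}, guarantees $K\psi(x)\geq\|u\|_{L^\infty(\Omega)}$ whenever ${\rm dist}(x,\pp\Omega)\geq\rho$. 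Set $w:=(u-K\psi)^+$. By Lemma \ref{positivenegativeparts}(i), $w\in W^{1,p}(\Omega,a)$, and the above choice of $K$ forces $w\equiv 0$ on $\Omega\setminus\Omega_\rho$; hence $w\in W^{1,p}_0(\Omega_\rho,a)$ and its zero extension belongs to $W^{1,p}_0(\Omega,a)$, so $w$ is admissible both in \eqref{weakaslnMODELDEGQUASILINEAR} and in the restricted version of Lemma \ref{comparison solution}(i) for $K\psi$.

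\medskip
Subtracting these two weak relations yields
\begin{align*}
&\int_{\{u>K\psi\}}a(x)\bigl(|\nabla u|^{p-2}\nabla u-|\nabla(K\psi)|^{p-2}\nabla(K\psi)\bigr)\cdot\nabla(u-K\psi)\,dx \\
&\qquad\leq\int_{\Omega_\rho}\bigl(F(x,u)-K^{p-1}a^{-\frac{1}{p-1}}(x)\bigr)\,w\,dx.
\end{align*}
The right-hand side is nonpositive by the first condition on $K$, while the classical strict monotonicity of the $p$-Laplace vector field makes the left-hand side nonnegative. Hence both sides vanish, forcing $\nabla u=\nabla(K\psi)$ a.e.\ on $\{u>K\psi\}$ and therefore $\nabla w\equiv 0$ a.e.; since $w\in W^{1,p}_0(\Omega_\rho,a)$, Friedrich's inequality then gives $w\equiv 0$, i.e., $u\leq K\psi$ a.e.\ in $\Omega$. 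An identical argument applied to $-u$, which solves the same problem with the Carath\'eodory nonlinearity $\widetilde F(x,\zeta):=-F(x,-\zeta)$ still satisfying \eqref{growthF(x,u)0stronger}, yields $-u\leq K\psi$ a.e.\ in $\Omega$.

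\medskip
Combining $|u|\leq K\psi$ with the barrier estimate \eqref{bdryestimate} then gives $|u(x)|\leq KC\,{\rm d}(x)$ for a.e.\ $x\in\Omega_\rho$, and dividing by ${\rm d}(x)>0$ produces \eqref{behavioru(x)} with $\tilde C:=C\,C_0$. The main delicate point I expect to address is verifying that $w=(u-K\psi)^+$ belongs to $W^{1,p}_0(\Omega_\rho,a)$ so that it is an admissible test function in the inequality from Lemma \ref{comparison solution}(i); this is precisely why the lower bound $K\,\uppsi(\rho)\geq\|u\|_{L^\infty(\Omega)}$ was imposed, and the verification ultimately relies on Lemma \ref{approxbyC1uptobdry} together with the algebraic properties of positive parts from Lemma \ref{positivenegativeparts}.
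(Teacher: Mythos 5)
Your proposal follows essentially the same strategy as the paper: compare $u$ with a scalar multiple of the barrier $\psi$ from Lemma \ref{comparison solution}, chosen large enough to dominate both the source term and $\|u\|_{L^\infty(\Omega)}$ on the inner boundary $\{{\rm dist}(\cdot,\partial\Omega)=\rho\}$, test the weak formulations against $(u-\text{barrier})^+$, and use monotonicity of $z\mapsto|z|^{p-2}z$ to force this positive part to vanish. Your constant $K=\max\{M^{1/(p-1)},\uppsi(\rho)^{-1}\|u\|_{L^\infty(\Omega)}\}$ differs only cosmetically from the paper's $M(1+\|u\|_{L^\infty(\Omega)})$ with $M$ subject to \eqref{choiceM}, and your final step (strict monotonicity forces $\nabla w=0$, then Friedrich's inequality gives $w\equiv 0$) is a valid alternative to the paper's route via strict positivity of $(M^{p-1}-\mathcal K)$.

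The part you flag as delicate but do not actually carry out is the admissibility claim $w:=(u-K\psi)^+\in W^{1,p}_0(\Omega_\rho,a)$, and the phrase ``hence $w\in W^{1,p}_0(\Omega_\rho,a)$'' glosses over a real difficulty. You cannot apply Lemma \ref{approxbyC1uptobdry} directly to $w$, because Serrin's regularity (Proposition \ref{SerrinRegul}) gives continuity of $u$ only in the interior, so $w$ is not known to lie in $C(\overline{\Omega}_\rho)$; in particular it need not vanish continuously on $\partial\Omega$. The paper resolves this with a cutoff decomposition $w=\chi w+(1-\chi)w$, with $\chi$ supported in a collar $\Omega_{\upsigma}$, $\upsigma<\rho$: the piece $\chi w$ is treated by approximating $w$ in $W^{1,p}_0(\Omega,a)$ by $C^\infty_c(\Omega)$ functions and multiplying by $\chi$ (which localizes the supports compactly inside $\Omega_\rho$), while the piece $(1-\chi)w$ is supported away from $\partial\Omega$, is therefore continuous up to $\overline{\Omega}_\rho$, vanishes on $\partial\Omega_\rho$ (on $\partial\Omega$ because $1-\chi=0$ there, on the inner boundary because of the bound $u\le K\psi$ there), and so falls under Lemma \ref{approxbyC1uptobdry}. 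Without this cutoff splitting, the citation of Lemmas \ref{positivenegativeparts} and \ref{approxbyC1uptobdry} alone does not close the argument.
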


\begin{proof}
Let $u \in W_0^{1,p}(\Omega,a)\cap L^{\infty}({\Omega})$ be a solution of \eqref{MODELDEGQUASILINEAR}. From \eqref{growthF(x,u)0stronger} there exists $\mathcal{K}>0$ such that 
\begin{equation}\label{reductionH4.1}
|{F}(x,u(x))|\,\leq \mathcal{K}\,a^{-\frac{1}{p-1}}(x)\,\quad \hbox{a.e. } x\in \Omega_{\rho}. 
\end{equation}

Let $\psi\in W^{1,p}_0(\Omega,a)\cap C(\overline{\Omega})$ be the function predicted in Lemma \ref{comparison solution}. We compare the solution $u$ of \eqref{MODELDEGQUASILINEAR}
with an appropriate multiple of $\psi$ in $\Omega_{\rho}$.

\medskip 
Fix $M>0$ such that
\begin{equation}\label{choiceM}
M^{p-1} > \mathcal{K}  \quad \hbox{and} \quad M \int_{0}^{\rho}{\sf a}^{-\frac{1}{p-1}}(\zeta)d\zeta \geq {C},
\end{equation}
where ${C}$ is the constant in \eqref{bdryestimate}. 
 
\medskip

Set $\overline{u}(x) = M\big(1+\|u\|_{L^{\infty}(\Omega)}\big)\,\psi(x)$ for $x\in \Omega$. We claim that $(u -\overline{u})^+ \in W^{1,p}_0(\Omega_{\rho},a)$. Let us assume for the moment that this claim has been proven. We would then proceed as follows. 

\medskip
Extend the functions in $W^{1,p}_0(\Omega_{\rho},a)$ to $\Omega_{\rho_0}$ and the functions in $W^{1,p}_0(\Omega_{\rho_0},a)$ to $\Omega$ by defining them as zero in $\Omega_{\rho_0}-\Omega_{\rho}$ and $\Omega-\Omega_{\rho_0}$, respectively. We obtain the embeddings:
$$
W^{1,p}_0(\Omega_{\rho},a)\hookrightarrow W^{1,p}_0(\Omega_{\rho_0},a)  \hookrightarrow W^{1,p}_0(\Omega,a).
$$

Next, choosing $\varphi=(u-\overline{u})^+$ in  \eqref{weakaslnMODELDEGQUASILINEAR} and \eqref{integralIdentcomparison}  and using the monotonicity of the vector function 
$$
\R^N \ni z\mapsto |z|^{p-2}z
$$
together with \eqref{reductionH4.1}, we get
$$
\begin{aligned}
0 \geq & \int_{\Omega_{\rho}\cap \{u \geq \overline{u}\}}a(x)\left(|\nabla \overline{u}|^{p-2}\nabla \overline{u} - |\nabla u|^{p-2}\nabla u\right)\cdot \nabla (u-\overline{u}) dx\\
= & \int_{\Omega_{\rho}}a(x)\left(|\nabla \overline{u}|^{p-2}\nabla \overline{u} - |\nabla u|^{p-2}\nabla u\right)\cdot \nabla (u-\overline{u})^+ dx\\
\geq & \int_{\Omega_{\rho}}\left(M^{p-1}a^{-\frac{1}{p-1}}(x)-F(x,u)\right)(u-\overline{u})^+dx 
\\
\geq & \int_{\Omega_{\rho}}  \left(M^{p-1}- \mathcal{K} \right)a^{-\frac{1}{p-1}}(x)(u-\overline{u})^+ dx.
\end{aligned}
$$

\medskip
From \eqref{choiceM},
$$
\int_{\Omega_{\rho}} \left(M^{p-1} -\mathcal{K}\right)a^{-\frac{1}{p-1}}(x)(u- \overline{u})^+dx \geq 0 
$$
and consequently $(u - \overline{u})^+=0$, i.e. $u \leq \overline{u}$ in $\Omega_{\rho}$. 

\medskip
Proceeding in a similar fashion for $-u$, we obtain that 
\begin{equation}\label{inequalityimport}
|u(x)| \leq \overline{u}(x)=  M\left(1 + \|u\|_{L^{\infty}(\Omega)}\right)\psi(x) \quad \hbox{for} \quad x\in \Omega_{\rho}.
\end{equation}

Inequality  \eqref{behavioru(x)} follows from \eqref{bdryestimate} and \eqref{inequalityimport}. This would complete the proof of the proposition.

\medskip

Next, we prove the claim. First notice that $u-\overline{u}\in W^{1,p}_0(\Omega,a)$ and Lemma \ref{positivenegativeparts} yields that $(u -\overline{u})^+ \in W^{1,p}_0(\Omega,a)$. 

\medskip
Fix $\upsigma\in (0,\rho)$ and let $\chi :\Omega \to [0,\infty)$ be a non-negative smooth function such that 
$$
\chi(x) = \left\{ 
\begin{aligned}
0&, \quad \hbox{for} \quad x\in \Omega - \Omega_{\upsigma},\\
1&, \quad \hbox{for} \quad x\in  \Omega_{\frac{\upsigma}{2}}.
\end{aligned}
\right.
$$

Since
$$
(u-\overline{u})^+ := \chi (u-\overline{u})^+ + (1-\chi)(u-\overline{u})^+ \quad \hbox{in}\quad \Omega,  
$$
it suffices to prove that $\chi(u-\overline{u})^+$ and $(1-\chi)(u-\overline{u})^+$ belong to $W^{1,p}_0(\Omega_{\rho},a)$. We prove these two statements as follows.

\medskip
First, approximating $(u -\overline{u})^+$ in $W^{1,p}_0(\Omega,a)$ with functions in $C^{\infty}_c(\Omega)$ yields $\chi(u-\overline{u})^+ \in W^{1,p}_0(\Omega_{\rho},a)$.

\medskip
On the other hand, from Proposition \ref{SerrinRegul}, $u\in C(\Omega)$ and thus $(u-\overline{u})^+ \in C(\overline{\Omega}_{\rho}-\pp \Omega)$. Using \eqref{distweight}, \eqref{bdryestimate}
and  \eqref{choiceM}, we find that $\overline{u} \geq u$ on {$\{{\rm dist}(\cdot,\pp \Omega)=\rho\}$} and hence 
\begin{equation}\label{boundarurho1cond}
(u-\overline{u})^+ =0\quad  \hbox{on} \quad {\{{\rm dist}(\cdot,\pp \Omega)=\rho\}}.
\end{equation}

\medskip
 
Consequently, $(1-\chi) (u-\overline{u})^+\in W^{1,p}(\Omega_{\rho},a)\cap C(\overline{\Omega}_{\rho})$ and from \eqref{boundarurho1cond}, 
$$
(1-\chi)(u-\overline{u})^+|_{\pp \Omega_{\rho}}=0.
$$ 

Lemma \ref{approxbyC1uptobdry} implies that $(1-\chi)(u-\overline{u})^+ \in W^{1,p}_0(\Omega_{\rho},a)$. This proves the claim and also completes the proof of the proposition.
\end{proof}


\section{Proof of Theorem \ref{multiplicitytheorem}}\label{degreemultiplicity}

In this part we follow the scheme from Chapter 5 in \cite{DRABEKMILOTA} to prove Theorem \ref{multiplicitytheorem}. Also, we make use of the notations and conventions introduced in Sections \ref{introduction} and  \ref{preliminaries}.

\medskip

Assume hypotheses {\it (A1)-(A2)} and {\it (B1)-(B2)}. Using {\it (B2)}, let $\rho_1\in (0,\rho_0)$ be such that 
\begin{equation}\label{reductionH4}
\forall x\in \Omega_{\rho_1}:\quad c_1 a^{-\frac{1}{p-1}}(x) \leq b(x) \leq c_2 a^{-\frac{1}{p-1}}(x),
\end{equation}
where $\Omega_{\rho_1}$ is as in \eqref{def:Omegarho}.

\medskip
Consider the space $C(\overline{\Omega})$ endowed with the norm $\|\cdot\|_{L^{\infty}(\Omega)}$ and its subspace
$$
C_0(\overline{\Omega}):=\{w \in  C(\overline{\Omega})\,:\, w=0 \quad \hbox{on} \quad \pp \Omega\}.
$$

Next, consider the space 
$$
Y_a:=\left\{\tilde{b} \in L^{\frac{q}{q-p}}(\Omega)\,:\, a^{\frac{1}{p-1}}|\tilde{b}| \in L^{\infty}(\Omega_{\rho_1})\right\}
$$
where $q \in [p,p_s^*)$ is described in hypothesis {\it (B1)} and $Y_{a}$ is endowed with the norm
$$
\|\tilde{b}\|_{Y_a}:=\|\tilde{b}\|_{L^{\frac{q}{q-p}}(\Omega)} + \|a^{\frac{1}{p-1}}\tilde{b}\|_{L^{\infty}(\Omega_{\rho_1})}.
$$

Recall also that $\frac{q}{q-p}:=\infty$ if $q=p$.

\medskip
The following lemma will help us to set up the functional analytic settings to o
{carry out} the proof of Theorem \ref{multiplicitytheorem}.

\begin{lemma}\label{supersolution}
Let $r\in \big[\frac{p_s^*}{p_s^*-1},\infty\big]$ and $\tilde{b}\in L^{r}(\Omega)$. The BVP
 \begin{equation}\label{equation4}
-\rdiv\left(a(x)|\nabla w|^{p-2}\nabla w\right) =  \tilde{b}(x) \quad  \hbox{in} \quad \,\, \Omega, \qquad w=0 \quad \hbox{on} \quad \pp\Omega,
\end{equation}
has a unique solution $w \in W^{1,p}_0(\Omega,a)$, which satisfies the estimate
\begin{equation}\label{est:resolvent1}
\|w\|_{W^{1,p}_0(\Omega,a)}\leq C\|\tilde{b}\|^{\frac{1}{p-1}}_{L^{r}(\Omega)}
\end{equation}
for some constant $C>0$, depending only on $p,s,r$, $\Omega$ and the function $a$.

\medskip
If in addition $\tilde{b}\in Y_{a}$, then the solution $w$ of \eqref{equation4} belongs also to $ C_0(\overline{\Omega})$, it is locally H\"{o}lder continuous in $\Omega$ and satisfies the estimate
\begin{equation}\label{est:resolvent2}
\|w\|_{L^{\infty}(\Omega)}+\|{\rm d}^{-1}w\|_{L^{\infty}(\Omega_{\rho_1})}\leq {C}\,\big(1 + \|\tilde{b}\|_{L^{\frac{q}{q-p}}(\Omega)}+\|\tilde{b}\|^{\frac{1}{p-1}}_{L^{\frac{q}{q-p}}(\Omega)}\big)
\end{equation}
for some constant $C>0$ depending only on $p,q,s$,$\Omega$ and the function $a$.
\end{lemma}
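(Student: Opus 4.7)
The proof proceeds by direct variational methods for existence, then bootstraps the regularity by invoking the a priori bounds already established in Section~\ref{preliminaries}, all applied to the Caratheodory function $F(x,\zeta) := \tilde b(x)$. First, I would minimize the energy
\[
J(u) := \frac{1}{p}\int_{\Omega} a(x)|\nabla u|^{p}\,dx - \int_{\Omega} \tilde b(x) u\,dx
\]
on $W^{1,p}_{0}(\Omega,a)$. The condition $r\geq p_{s}^{*}/(p_{s}^{*}-1)$ together with the Sobolev embedding \eqref{WeightedSobolevEmbedding} makes $u\mapsto \int \tilde b\, u$ a bounded linear functional on $W^{1,p}_{0}(\Omega,a)$; Friedrichs' inequality together with Young's inequality yields coercivity, while strict convexity of $z\mapsto |z|^{p}$ and weak lower semicontinuity give a unique minimizer, which is the unique solution $w$ of \eqref{equation4}. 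Testing \eqref{weakaslnMODELDEGQUASILINEAR} with $\varphi = w$ and using $r' \leq p_{s}^{*}$ in \eqref{WeightedSobolevEmbedding},
\[
\|w\|_{W^{1,p}_{0}(\Omega,a)}^{p} \;\leq\; \|\tilde b\|_{L^{r}(\Omega)} \|w\|_{L^{r'}(\Omega)} \;\leq\; C\,\|\tilde b\|_{L^{r}(\Omega)}\,\|w\|_{W^{1,p}_{0}(\Omega,a)},
\]
from which \eqref{est:resolvent1} is immediate after dividing.

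Next, assume $\tilde b\in Y_{a}$. Since $p>1$ and $p\leq q < p_{s}^{*}$, a short computation shows $q/(q-p)\geq p_{s}^{*}/(p_{s}^{*}-p) > p_{s}^{*}/(p_{s}^{*}-1)$, so the existence part applies with $r = q/(q-p)$. Interior local H\"older continuity comes from Proposition~\ref{SerrinRegul} applied with $b$ replaced by $|\tilde b|$. For the $L^{\infty}$ bound, my plan is to use homogeneity: set $M := \|\tilde b\|^{1/(p-1)}_{L^{q/(q-p)}(\Omega)}$ (assume $\tilde b \not\equiv 0$) and $v := w/M$. Then $v$ is a weak solution of \eqref{MODELDEGQUASILINEAR} with right-hand side $\hat b := \tilde b/M^{p-1}$ normalized so that $\|\hat b\|_{L^{q/(q-p)}(\Omega)} = 1$. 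Proposition~\ref{BrezisKato} applied with $b = |\hat b|$, combined with $\|\hat b\|_{L^{1}(\Omega)}\leq C$ (H\"older) and the $L^{p}$-bound for $v$ from the energy estimate, yields $\|v\|_{L^\infty(\Omega)}\leq C'$ with $C'$ independent of $\tilde b$. Rescaling gives $\|w\|_{L^{\infty}(\Omega)}\leq C'\|\tilde b\|^{1/(p-1)}_{L^{q/(q-p)}(\Omega)}$, which is consistent with the first half of \eqref{est:resolvent2}.

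For the boundary behavior, the hypothesis $\tilde b\in Y_a$ is exactly condition \eqref{growthF(x,u)0stronger} for $F = \tilde b$ on $\Omega_{\rho_{1}}$ (with $\mathcal K=\|a^{1/(p-1)}\tilde b\|_{L^\infty(\Omega_{\rho_1})}$), so Proposition~\ref{BDCONDITION} delivers
\[
\|{\rm d}^{-1}w\|_{L^\infty(\Omega_{\rho_1})} \;\leq\; \tilde C\,\bigl(1 + \|w\|_{L^{\infty}(\Omega)}\bigr),
\]
which, together with the $L^{\infty}$ estimate, provides the second half of \eqref{est:resolvent2}. Continuity of $w$ up to $\pp \Omega$ follows at once: interior continuity from Proposition~\ref{SerrinRegul}, and the comparison proved inside Proposition~\ref{BDCONDITION} gives $|w(x)|\leq C\,{\rm d}(x)= C\int_{0}^{\rdist(x,\pp\Omega)}{\rm a}^{-1/(p-1)}(\zeta)\,d\zeta\to 0$ as $x\to\pp\Omega$, by absolute continuity of the integral (since ${\rm a}^{-1/(p-1)}\in L^{1}(0,\rho_{0})$ by \eqref{hyp:rma}). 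Hence $w\in C_{0}(\overline{\Omega})$.

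The most delicate point is tracking the precise dependence of the constant in \eqref{est:resolvent2}: the constant $\tilde C$ of Proposition~\ref{BDCONDITION} is built from the choice of $M$ in \eqref{choiceM} and is therefore sensitive to $\|a^{1/(p-1)}\tilde b\|_{L^{\infty}(\Omega_{\rho_{1}})}$ (the part of the $Y_{a}$-norm beyond $L^{q/(q-p)}$). A second rescaling, or a careful absorption of this $Y_a$-contribution into the universal constant $C$, will be needed to bring the final bound into the form stated with only the $L^{q/(q-p)}$-norm of $\tilde b$ explicit on the right hand side.
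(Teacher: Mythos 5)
Your proof follows the same overall architecture as the paper's (variational existence, Friedrichs plus Sobolev for the energy estimate, Proposition~\ref{BrezisKato} for $L^\infty$, Proposition~\ref{SerrinRegul} for interior regularity, and Proposition~\ref{BDCONDITION} for the boundary behavior and membership in $C_0(\overline{\Omega})$), but the scaling device you introduce for the $L^\infty$ bound is a genuine refinement, not a cosmetic difference. The paper passes $\tilde b$ directly into Proposition~\ref{BrezisKato} to read off $\|w\|_{L^\infty}\leq C(1+\|\tilde b\|_{L^1}+\|w\|_{L^p})$ and then substitutes the Friedrichs bound for $\|w\|_{L^p}$; but tracing through the proof of Proposition~\ref{BrezisKato} one sees that the constant $C$ there already depends on $\|b\|_{L^{q/(q-p)}(\Omega)}$ (through $c_1=\|b\|_{L^{q/(q-p)}}^{1+\kappa_0}$ and hence $c_2$), so the paper's version of the inequality has an unacknowledged hidden dependence on $\|\tilde b\|_{L^{q/(q-p)}}$ inside the constant. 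Your normalization $v:=w/M$, $\hat b:=\tilde b/M^{p-1}$ with $\|\hat b\|_{L^{q/(q-p)}}=1$ pins the Brezis--Kato constant to a universal value, and the resulting bound $\|w\|_{L^\infty}\leq C\,\|\tilde b\|^{1/(p-1)}_{L^{q/(q-p)}}$ is actually cleaner than (and implies) the first half of \eqref{est:resolvent2}. So at this point your argument is tighter than the paper's.

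You also correctly flag the real soft spot, which the paper does not resolve either: the constant $\tilde C$ in Proposition~\ref{BDCONDITION} is built from the choice of $M$ in \eqref{choiceM}, which is coupled to $\mathcal K=\|a^{1/(p-1)}\tilde b\|_{L^\infty(\Omega_{\rho_1})}$, i.e.\ to the part of the $Y_a$-norm that does \emph{not} appear on the right side of \eqref{est:resolvent2}. Tracing the proof of Proposition~\ref{BDCONDITION} explicitly gives
\[
\|{\rm d}^{-1}w\|_{L^\infty(\Omega_{\rho_1})}\;\leq\; C\bigl(1+\mathcal K^{1/(p-1)}\bigr)\bigl(1+\|w\|_{L^\infty(\Omega)}\bigr),
\]
so the constant in \eqref{est:resolvent2} in fact depends on $\mathcal K$ as well, contradicting the stated dependence ``only on $p,q,s,\Omega$ and $a$.'' In the place where Lemma~\ref{supersolution} is actually used (Lemma~\ref{resolventoperator}), the sequences $\{\tilde b_n\}$ are uniformly bounded in $Y_a$, so the extra factor is harmless there; but as a free-standing statement \eqref{est:resolvent2} is imprecise, and your suggestion of a second rescaling (or of replacing $\|\tilde b\|_{L^{q/(q-p)}}$ by $\|\tilde b\|_{Y_a}$ on the right side) is exactly what would be needed to make it literally correct. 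Your plan is sound; the only thing I would add is to make this last point quantitative rather than leaving it as a remark, since it is not merely a bookkeeping issue but a gap in the stated lemma as written.
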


\begin{proof} Given  $r\in [\frac{p_s^*}{p_s^*-1},\infty]$ and $\tilde{b}\in L^{r}(\Omega)$, 
the existence of a unique solution $w \in W^{1,p}_0(\Omega,a)$ of \eqref{equation4}, follows readily by a minimization procedure and a weak lower semicontinuity argument or by the use of theory of monotone operators.

\medskip
Set $\frac{r}{r-1}:=1$, if $r=\infty$. The weak form of \eqref{equation4} (see \eqref{weakaslnMODELDEGQUASILINEAR}), the fact that $\frac{r}{r-1}\in [1,p_s^*]$ together with H\"{o}lder's inequality and the Sobolev embedding in \eqref{WeightedSobolevEmbedding}, imply that
$$
\|w\|^p_{W^{1,p}_0(\Omega,a)}=\int_{\Omega}\tilde{b}w dx \leq \|\tilde{b}\|_{L^{r}(\Omega)}\|w\|_{L^{\frac{r}{r-1}}(\Omega)}\leq \tilde{C}\|\tilde{b}\|_{L^{r}(\Omega)}\|w\|_{W^{1,p}_0(\Omega,a)}
$$
for some constant $\tilde{C}>0$ depending only on $p,s,r$, $\Omega$ and $a$. The estimate in \eqref{est:resolvent1} follows taking $C=\tilde{C}^{\frac{1}{p-1}}$.

\medskip
Assume now that $\tilde{b}\in Y_a$.  Propositions \ref{BrezisKato} and \ref{SerrinRegul} imply, respectively, that $w\in L^{\infty}(\Omega)$ and $w$ is locally H\"{o}lder continuous in $\Omega$. Also,  the estimate \eqref{BrezisKatoLINFTY} in Proposition \ref{BrezisKato} yields that 
$$
\|w\|_{L^{\infty}(\Omega)} \leq C(1 + \|\tilde{b}\|_{L^{1}(\Omega)} + \|w\|_{L^{p}(\Omega)})
$$
and from the Friedrich's inequality, the estimate in \eqref{est:resolvent1} with $r=\frac{q}{q-p}$ and H\"{o}lder's inequali\-ty we find that 
\begin{equation}\label{est:resolvent3}
\|w\|_{L^{\infty}(\Omega)} \leq C\Bigl(1 + \|\tilde{b}\|_{L^{\frac{q}{q-p}}(\Omega)} + \|\tilde{b}\|^{\frac{1}{p-1}}_{L^{\frac{q}{q-p}}(\Omega)}\Bigr).
\end{equation}

Next, we apply Proposition \ref{BDCONDITION}, with $\rho=\rho_1$, to find that $w\in C_{0}(\overline{\Omega})$. The estimate in \eqref{est:resolvent2} follows from the estimate  \eqref{est:resolvent3} and the estimate \eqref{behavioru(x)} in Proposition \ref{BDCONDITION}, again with $\rho=\rho_1$. This completes the proof of the lemma.
\end{proof}

Using Proposition \ref{supersolution}, let $\LL:L^{\frac{p_s^*}{p_s^*-1}}(\Omega) \to W^{1,p}_0(\Omega,a)$ denote the resolvent operator of \eqref{equation4} and observe that $\LL:Y_a \to C_0(\overline{\Omega})$ is well defined.

\begin{lemma}\label{resolventoperator}The operator $\LL:Y_{a}\to C_0(\overline{\Omega})$ is compact and continuous.
\end{lemma}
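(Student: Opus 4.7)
The plan is to establish compactness first, and then derive continuity from compactness together with uniqueness for the resolvent equation \eqref{equation4}.

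For compactness, I would take a bounded sequence $\{\tilde{b}_n\}_n\subset Y_a$, set $w_n:=\LL(\tilde{b}_n)$, and show that $\{w_n\}$ admits a subsequence converging in $C_0(\overline{\Omega})$. The estimates \eqref{est:resolvent1}--\eqref{est:resolvent2} from Lemma \ref{supersolution} provide a uniform bound of $\{w_n\}$ in $W^{1,p}_0(\Omega,a)\cap L^{\infty}(\Omega)$ and, crucially, a uniform boundary decay
$$
|w_n(x)|\,\leq\, M\,{\rm d}(x)\quad \hbox{for every } x\in \Omega_{\rho_1} \text{ and } n\in\mathbb{N},
$$
with $M>0$ depending only on the $Y_a$-bound. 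On each open set $D$ with $\overline{D}\subset \Omega$, Proposition \ref{SerrinRegul} applied to $w_n$ (with right-hand side $\tilde{b}_n$, whose $L^{q/(q-p)}$-norm is uniformly bounded) yields a uniform H\"older bound $[w_n]_{C^{0,\alpha}(\overline{D})}\leq C(D)$. Combining this with Arzel\`a--Ascoli along a compact exhaustion of $\Omega$ and a standard diagonal extraction produces a subsequence converging locally uniformly in $\Omega$ to some $w^*\in C(\Omega)$. Extending $w^*$ by zero to $\pp \Omega$, I would promote the local uniform convergence to uniform convergence on $\overline{\Omega}$ as follows: given $\ep>0$, pick $\rho'\in(0,\rho_1)$ with $M\,{\rm d}(x)<\ep$ throughout $\Omega_{\rho'}$ (this controls both $w_n$ and $w^*$ near $\pp\Omega$), and use the local uniform convergence on the compact set $\overline{\Omega}\setminus \Omega_{\rho'/2}$.

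For continuity, suppose $\tilde{b}_n\to \tilde{b}$ in $Y_a$ and write $\tilde{w}:=\LL(\tilde{b})$. Applying the compactness step to any subsequence of $\{\LL(\tilde{b}_n)\}$ yields a further subsequence converging in $C_0(\overline{\Omega})$ to some $w^*$. To identify $w^*=\tilde{w}$, note that $\{w_n\}$ is bounded in $W_0^{1,p}(\Omega,a)$ by \eqref{est:resolvent1}, hence along a further subsequence $w_n\rightharpoonup \bar w$ weakly there; subtracting the weak formulations for $w_n$ and $\tilde{w}$ with test function $w_n-\tilde{w}$ gives
$$
\int_{\Omega} a(x)\bigl(|\nabla w_n|^{p-2}\nabla w_n-|\nabla \tilde{w}|^{p-2}\nabla \tilde{w}\bigr)\cdot \nabla (w_n-\tilde{w})\,dx \,=\, \int_{\Omega}(\tilde{b}_n-\tilde{b})(w_n-\tilde{w})\,dx,
$$
whose right-hand side vanishes by H\"older's inequality and the embedding $W^{1,p}_0(\Omega,a)\hookrightarrow L^q(\Omega)$ from \eqref{embeddingimportant}. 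Strict monotonicity of $z\mapsto |z|^{p-2}z$ then forces $w_n\to \tilde{w}$ strongly in $W^{1,p}_0(\Omega,a)$, so $\bar w=\tilde{w}$ and, by the already established $C_0(\overline{\Omega})$ convergence, $w^*=\tilde{w}$. The subsequence principle upgrades this to convergence of the full sequence $\LL(\tilde{b}_n)\to \LL(\tilde{b})$ in $C_0(\overline{\Omega})$.

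The main obstacle is guaranteeing the uniform interior H\"older estimate along the sequence $\{\tilde{b}_n\}$. Proposition \ref{SerrinRegul} as formulated depends on the fixed function $b$ from (B1), so what one really needs is the quantitative version of Serrin's theorem with constants depending only on the $L^{q/(q-p)}$-norm of the right-hand side; this is exactly what is available in \cite{SERRIN}, since the inequalities in \eqref{exponentsinequalities} give the required super-critical integrability. The complementary boundary control, via Lemma \ref{comparison solution} and Proposition \ref{BDCONDITION}, is already in place from Section \ref{2.3}, so no new ingredients are needed beyond those derived in the preliminaries.
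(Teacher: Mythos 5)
Your proposal is sound and tracks the paper's Step~1 (compactness) quite closely: uniform $W^{1,p}_0(\Omega,a)$ and $L^{\infty}$ bounds from \eqref{est:resolvent1}--\eqref{est:resolvent2}, interior equi-H\"older continuity from Serrin's estimate, Arzel\`a--Ascoli plus a diagonal extraction over a compact exhaustion, and the uniform boundary decay $|w_n|\leq M\,{\rm d}$ to upgrade local uniform convergence to convergence in $C_0(\overline{\Omega})$. Where you diverge is in the continuity step. The paper folds a limit identification into Step~1 --- it extracts a weak $L^{p_s^*/(p_s^*-1)}$ limit $\tilde{b}_0$ of $\tilde b_n$, passes to the limit in the weak formulation via a monotone-operator argument borrowed from \cite{AGUDELODRABEK1}, and concludes $w_n\to\LL(\tilde b_0)$ in $C_0(\overline\Omega)$; continuity is then dispatched with ``proceed as in Step~1.'' You instead keep the compactness step minimal (no limit identification needed there) and derive continuity from compactness plus a direct monotonicity energy estimate: testing the difference of weak formulations against $w_n-\tilde{w}$, showing the right-hand side $\int(\tilde b_n-\tilde b)(w_n-\tilde w)$ vanishes because $\tilde b_n\to\tilde b$ in $L^{q/(q-p)}(\Omega)$ and $w_n-\tilde w$ is bounded in $L^{q/p}(\Omega)\subset L^{p_s^*}(\Omega)$, and then invoking strict monotonicity of $z\mapsto|z|^{p-2}z$ to force strong $W^{1,p}_0(\Omega,a)$ convergence. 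This is cleaner and more self-contained than the paper's ``slight changes,'' at the cost of needing the standard but nontrivial fact that vanishing of the monotonicity pairing implies strong gradient convergence (which requires the two cases $p\geq 2$ and $1<p<2$). Your flagged concern about Proposition~\ref{SerrinRegul} is also well-taken and is in fact a small imprecision in the paper as well: as stated the H\"older constants depend on the fixed $b$ from (B1), not on the $L^{q/(q-p)}$-norm of the varying right-hand sides $\tilde b_n$; the fix you indicate --- citing the quantitative dependence in \cite{SERRIN} on the integrability norm alone --- is exactly what is needed, and both your argument and the paper's Step~1 implicitly rely on it.
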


\begin{proof} {\it Step 1:  compactness.} Assume $\{\tilde{b}_n\}_n \subset Y_a$ is bounded. Thus, there exists $\mathcal{K}>0$ such that
\begin{equation}\label{boundedweight}
\forall n\in \Bbb N:\quad \|\tilde{b}_n\|_{L^{\frac{q}{q-p}}(\Omega)} + \|a^{\frac{1}{p-1}}\tilde{b}_n\|_{L^{\infty}(\Omega_{\rho_1})}\leq \mathcal{K}.
\end{equation}

Noticing that $ L^{\frac{q}{q-p}}(\Omega)\subset L^{\frac{p_s^*}{p_s^*-1}}(\Omega)$ and using the reflexivity and separability of the space $L^{\frac{p_s^*}{p_s^*-1}}(\Omega)$ we may assume, passing to a subsequence if necessary,  the existence of $\tilde{b}_0 \in L^{\frac{p_s^*}{p_s^*-1}}(\Omega)$ such that for every $\varphi \in W^{1,p}_0(\Omega,a)$,
\begin{equation}\label{weakconvbn}
\lim \limits_{n\to \infty}\int_{\Omega} \tilde{b}_n(x)\varphi dx = \int_{\Omega} \tilde{b}_0(x)\varphi dx.
\end{equation}

Next, for every $n\in \mathbb{N}\cup \{0\}$, set 
\begin{equation}\label{functionaleqn}
w_n:= \LL(\tilde{b}_n)
\end{equation} 
and observe from \eqref{est:resolvent1}, \eqref{boundedweight} and \eqref{functionaleqn} that $\{w_n\}_{n\in \mathbb{N}\cup \{0\}}$ is bounded in $W^{1,p}_0(\Omega,a)$.

\medskip
Proceeding as in the Proof of Theorem 1.1 in \cite{AGUDELODRABEK1} (p. 1163-1164) using the embeddings in \eqref{embeddingimportant} and the reflexivity and separability of the space $W^{1,p} _0(\Omega,a)$, we find $\tilde{w}_0\in W^{1,p}_{0}(\Omega,a)$ such that up to a subsequence,
\begin{equation}\label{integralid}
\lim_{n\to \infty}\int_{\Omega}a(x)|\nabla w_n|^{p-2}\nabla w_n\cdot \nabla \varphi dx = \int_{\Omega}a(x)|\nabla \tilde{w}_0|^{p-2}\nabla \tilde{w}_0\cdot \nabla \varphi dx, \quad \forall \, \varphi \in W^{1,p}_0(\Omega,a).
\end{equation}

We can assume also that $w_n(x) \to \tilde{w}_0(x)$ a.e. $x \in \Omega$.

\medskip
Putting together \eqref{weakconvbn}, \eqref{functionaleqn} and  \eqref{integralid},
 we obtain that
\begin{equation}\label{eqntildew2}
\int_{\Omega} \left(a(x)|\nabla \tilde{w}_0|^{p-2}\nabla \tilde{w}_0 \cdot \nabla \varphi - \tilde{b}_0(x)\cdot \varphi \right)dx =0, \quad \forall \varphi \in W^{1,p}_0(\Omega,a).
\end{equation}

From \eqref{eqntildew2} and Lemma \ref{supersolution}, $\tilde{w}_0=\LL(\tilde{b}_0)=w_0 \in W^{1,p}_0(\Omega,a)$. 

\medskip
The definition of $w_n$  in \eqref{functionaleqn}, together with the estimates in \eqref{est:resolvent2} and \eqref{boundedweight} yield that the sequence $\{w_n\}_n$ is bounded in $L^{\infty}(\Omega)$.

\medskip
Next, we prove the existence of a subsequence of $\{w_n\}_n$, say $\{w_{n_k}\}_k$, such that 
\begin{equation}\label{morethancompactness}
w_{n_k}=\LL(\tilde{b}_{n_k})\to w_0=\LL(\tilde{b}_0) \quad \hbox{strongly in} \quad L^{\infty}(\Omega).
\end{equation}

Proposition \ref{SerrinRegul} applied to each domain in the sequence of open sets
$$
D_k:=\left\{x\in \Omega\,:\,\rdist(x,\pp \Omega)>\frac{1}{k}\right\} \quad \hbox{for} \quad k \in \mathbb{N},
$$
and performing a domain approximation procedure (see the proof of Theorem 1.1 in \cite{AGUDELODRABEK1} p. 1164-1166) and a diagonal argument, we pass to a subsequence, denoted again by $\{w_n\}_n$,  such that $w_n\to {w}_0$ uniformly over compact sets of $\Omega$.

\medskip
Using again the estimates in  \eqref{est:resolvent2} and \eqref{boundedweight}, we find a constant $c>0$ such that for every $n\in \mathbb{N}$ and every $x\in \Omega_{\rho_1}$,
\begin{equation*}\label{xx}
|w_n(x)|\big) \leq c \,{\rm d}(x).
\end{equation*}

Since $w_n(x)\to w_0(x)$ for a.e. $x\in \Omega$, we also find that for a.e. $x\in \Omega_{\rho_1}$,
\begin{equation*}\label{xx}
|{w}_0(x)|\leq c \,{\rm d}(x).
\end{equation*}

We conclude that  
\begin{equation}\label{xxx}
\|w_n\|_{L^{\infty}(\Omega_{\rho})} + \|{w}_0\|_{L^{\infty}(\Omega_{\rho})}\to  0, \quad \hbox{as }\quad \rho\to 0
\end{equation}
uniformly in $n\in \mathbb{N}$, where $\Omega_{\rho}$ is defined in \eqref{def:Omegarho}.

\medskip
The fact that $w_n \to {w}_0$ uniformly over compact subsets of $\Omega$ and \eqref{xxx} imply that $w_n \to {w}_0$ strongly in $L^{\infty}(\Omega)$. This completes the proof of the Step 1.

\medskip
{\it Step 2: continuity.} We proceed as in {\it Step 1} with only slight changes. The details are left to the reader. This completes the proof of the lemma. 
\end{proof}

Next, let $F=F(x,\zeta)$ be a Caratheodory function and let $\underline{u}, \overline{U}\in W^{1,p}(\Omega,a)\cap C(\overline{\Omega})$ be a subsolution and a supersolution to \eqref{MODELDEGQUASILINEAR}, respectively. Assume further that $\underline{u} \leq \overline{U}$ in $\overline{\Omega}$.

\medskip
Consider the truncated BVP
\begin{equation}\label{MODELDEGQUASILINEARTRUNC}
-\rdiv\left(a(x)|\nabla u|^{p-2}\nabla u\right)=  \tilde{F}(x,u) \quad  \hbox{in}  \quad \,\,\Omega,\qquad
u=0 \quad  \hbox{on}  \quad  \pp \Omega,
\end{equation}
where
$$
\widetilde{F}(x,\zeta):=\left\{
\begin{aligned}
&F(x,\zeta),& \quad \underline{u}(x)\leq &\zeta\leq \overline{U}(x)&\\
&F(x,\underline{u}(x)), & \quad &\zeta\leq \underline{u}(x)&\\
&F(x,\overline{U}(x)),& \quad  &\overline{U}(x)\leq \zeta&
\end{aligned}
\right. \quad \hbox{for}\quad x\in \Omega, \quad \zeta\in\R.
$$

Observe that $\tilde{F}$ is also a Caratheodory function.

\begin{lemma}\label{Slnstruncated=Slnsorig}
Let $u\in W^{1,p}(\Omega,a)\cap C_0(\overline{\Omega})$. If $u$ is a solution of \eqref{MODELDEGQUASILINEARTRUNC}, then $u$ is also a solution of \eqref{MODELDEGQUASILINEAR} with $\underline{u}\leq u \leq \overline{U}$ in $\Omega$.
\end{lemma}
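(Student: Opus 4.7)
The plan is to show, via standard sub-supersolution testing, that any solution $u$ of the truncated problem satisfies $\underline{u} \leq u \leq \overline{U}$ in $\Omega$; once this is established, the definition of $\widetilde{F}$ immediately gives $\widetilde{F}(\cdot,u(\cdot)) = F(\cdot,u(\cdot))$ a.e. in $\Omega$, so $u$ solves \eqref{MODELDEGQUASILINEAR}.

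First I would verify that the natural test functions belong to $W^{1,p}_0(\Omega,a)$. Since $u\in W^{1,p}_0(\Omega,a)\cap C_0(\overline{\Omega})$ and $\underline{u}\in W^{1,p}(\Omega,a)\cap C(\overline{\Omega})$ with $\max_{\partial\Omega}\underline{u}\leq 0$, the function $\varphi_1:=(\underline{u}-u)^+$ lies in $W^{1,p}(\Omega,a)\cap C(\overline{\Omega})$ by parts (i) and (iii) of Lemma \ref{positivenegativeparts}, and it vanishes on $\partial\Omega$ because $\underline{u}-u=\underline{u}\leq 0$ there. Lemma \ref{approxbyC1uptobdry} then yields $\varphi_1\in W^{1,p}_0(\Omega,a)$. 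Analogously, $\varphi_2:=(u-\overline{U})^+\in W^{1,p}_0(\Omega,a)$, using $\min_{\partial\Omega}\overline{U}\geq 0$.

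Next, I would test. For the subsolution inequality \eqref{weakaslnMODELDEGQUASILINEAR2} on $\underline{u}$ with $\varphi=\varphi_1\geq 0$, and the weak equation \eqref{weakaslnMODELDEGQUASILINEAR} of $u$ (with $\widetilde F$ in place of $F$) against the same $\varphi_1$, I would subtract. On the set $\{\underline{u}>u\}$ the truncation gives $\widetilde{F}(x,u(x))=F(x,\underline{u}(x))$, so the right-hand sides cancel exactly, leaving
\begin{equation*}
\int_{\{\underline{u}>u\}} a(x)\bigl(|\nabla \underline{u}|^{p-2}\nabla \underline{u}-|\nabla u|^{p-2}\nabla u\bigr)\cdot \nabla(\underline{u}-u)\,dx\,\leq\,0.
\end{equation*}
The strict monotonicity of the map $z\mapsto |z|^{p-2}z$ on $\R^N$ forces $\nabla(\underline{u}-u)^+=0$ a.e.\ in $\Omega$, and since $\varphi_1\in W^{1,p}_0(\Omega,a)$, Friedrich's inequality (cited at the end of subsection \ref{2.1}) gives $\varphi_1\equiv 0$, i.e.\ $u\geq \underline{u}$ in $\Omega$. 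The dual argument with $\varphi_2$, using the supersolution inequality and the relation $\widetilde{F}(x,u(x))=F(x,\overline{U}(x))$ on $\{u>\overline{U}\}$, produces $u\leq \overline{U}$ in $\Omega$.

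Finally, with $\underline{u}\leq u\leq \overline{U}$ a.e.\ in $\Omega$, the definition of $\widetilde{F}$ collapses to $\widetilde{F}(x,u(x))=F(x,u(x))$ a.e., and substituting into the weak form of \eqref{MODELDEGQUASILINEARTRUNC} gives \eqref{weakaslnMODELDEGQUASILINEAR}; the integrability condition $F(\cdot,u(\cdot))\in L^{p/(p-1)}(\Omega)$ follows from \eqref{growthF(x,u)0} and $b\in L^{q/(q-p)}(\Omega)\subset L^{p/(p-1)}(\Omega)$ via Hölder. The only non-routine step is the admissibility of $\varphi_1,\varphi_2$ as test functions in $W^{1,p}_0(\Omega,a)$: because the weight $a$ degenerates at $\partial\Omega$, one cannot invoke the standard trace theory and must appeal to Lemma \ref{approxbyC1uptobdry}, which is precisely the mechanism by which continuity up to the boundary plus the sign of $\underline{u},\overline{U},u$ on $\partial\Omega$ compensates for the degeneracy.
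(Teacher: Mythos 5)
Your argument is essentially the paper's own proof: show $(\underline{u}-u)^+$ and $(u-\overline{U})^+$ lie in $W^{1,p}_0(\Omega,a)$ (via Lemmas \ref{positivenegativeparts} and \ref{approxbyC1uptobdry}), test the truncated equation and the sub/supersolution inequality against them, invoke the strict monotonicity of $z\mapsto|z|^{p-2}z$ plus Friedrich's inequality to force $\underline{u}\le u\le\overline{U}$, and then observe that $\widetilde{F}(\cdot,u(\cdot))=F(\cdot,u(\cdot))$. One small slip in your closing remark: the inclusion $L^{q/(q-p)}(\Omega)\subset L^{p/(p-1)}(\Omega)$ requires $\frac{q}{q-p}\ge\frac{p}{p-1}$, i.e.\ $q\le p^2$, which is not guaranteed by $q\in[p,p_s^*)$ since $p_s^*$ may exceed $p^2$ (for instance when $N<p^2/(p-1)$); the pairing $\int F(x,u)\varphi\,dx$ is nevertheless well-defined for every $\varphi\in W^{1,p}_0(\Omega,a)$ because $q<pp_s^*$ gives $\frac{q}{q-p}\ge\frac{p_s^*}{p_s^*-1}$ and hence $b\in L^{(p_s^*)'}(\Omega)$, which together with the embedding \eqref{embeddingimportant} is what the weak formulation actually uses.
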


\begin{proof}
Let $u\in W^{1,p}(\Omega,a)\cap C_0(\overline{\Omega})$ be a solution of \eqref{MODELDEGQUASILINEARTRUNC}. Since $\underline{u}\leq 0$ on $\pp \Omega$, $(\underline{u} - u)^+$ belongs to $W^{1,p}(\Omega,a)\cap C_0(\overline{\Omega})$. Lemma \ref{positivenegativeparts} implies that $(\underline{u}-u)^+\in W^{1,p}_0(\Omega,a)$.

\medskip
From \eqref{weakaslnMODELDEGQUASILINEAR}, with $\tilde{F}(x,\zeta)$ instead of $F(x,\zeta)$, and \eqref{weakaslnMODELDEGQUASILINEAR2},
$$
\begin{aligned}
0 \leq & \int_{\Omega}a(x)\left(|\nabla\underline{u}|^{p-2}\nabla \underline{u} -|\nabla u|^{p-2}\nabla u\right)\cdot \nabla (\underline{u} - u)^+ dx \\
\leq &  \int_{\{ \underline{u} \geq u \}}(F(x,\underline{u})-\widetilde{F}(x,u))(\underline{u}-u)^+ dx =0
\end{aligned}
$$
{from where we obtain $\underline{u}\leq u$ in a.e. $\Omega$. Proceeding in similar fashion we can verify that $\overline{U} \geq u$.} This proves the lemma.

\end{proof}

Let $F$ satisfy  \eqref{growthF(x,u)0}. Set 
$$
M:= \max\left(\|\underline{u}\|_{L^{\infty}(\Omega)}, \|\overline{U}\|_{L^{\infty}(\Omega)}\right) +1
$$
so that there exists $C_{M}>0$ such that for a.e. $x\in \Omega$ and every $\zeta \in \R$,
\begin{equation}\label{growthF(x,u)0strongertilde}
|\tilde{F}(x,\zeta)|\leq C_M b(x).
\end{equation}

Thanks to \eqref{reductionH4} and  \eqref{growthF(x,u)0strongertilde} $\tilde{F}(x,\zeta)$ satisfies \eqref{growthF(x,u)0stronger} and for any $u\in C_0(\overline{\Omega})$, $\tilde{F}(\cdot,u(\cdot))\in Y_a$.

\medskip
From Lemma \ref{supersolution}, the operator 
\begin{equation*}\label{fixedpointeqn}
T:C_0(\overline{\Omega})\to C_0(\overline{\Omega}), \qquad T(u):= \LL\left(\tilde{F}\big(\cdot,u(\cdot)\big)\right), \qquad u\in C_0(\overline{\Omega})
\end{equation*}
is well defined.

\begin{proof}[Proof of Theorem \ref{multiplicitytheorem}] 

{\it Step 1: compactness and continuity of $T$.}  The compactness of $T$ is a direct consequence of \eqref{growthF(x,u)0strongertilde} and Lemma \ref{resolventoperator}.

\medskip
To prove the continuity of $T$ we proceed as follows. Let $\{u_n\}_{n\in \Bbb N \cup \{0\}}\subset C_0(\overline{\Omega})$ such that $u_n \to u_0$ uniformly in ${\Omega}$. In particular,
$$
\forall x\in \Omega
:\quad u_n(x)\to u_0(x), \quad \hbox{as} \quad n\to \infty. 
$$

Since $\tilde{F}$ is a Caratheodory function, 
\begin{equation}\label{pointwiseconvergenceI}
\forall x\in \Omega
:\quad \tilde{F}(x,u_n(x))\to \tilde{F}(x,u_0(x)), \quad \hbox{as} \quad n\to \infty. 
\end{equation}

For every $n\in \Bbb N\cup \{0\}$, set
$$
\tilde{b}_n:=\tilde{F}(\cdot,u_n(\cdot))\quad  \hbox{and} \quad w_n:=\LL(\tilde{b}_n)=T(u_n).
$$ 

We remark that $\{\tilde{b}_n\}_{n\in \Bbb N}\subset Y_a$, but it may not converge in $Y_a$ and hence the continuity of $T$ does not follow directly from the continuity of $\LL$. 

\medskip
We overcome this difficulty as follows. Due to \eqref{growthF(x,u)0strongertilde}, \eqref{boundedweight} is satisfied. This fact, together with \eqref{pointwiseconvergenceI} and the Dominated convergence Theorem imply that \eqref{weakconvbn} holds true.
  
\medskip  
Proceeding in the same fashion as in Step 1 in the proof of Lemma \ref{resolventoperator}, we find that passing to a subsequence if necessary, \eqref{morethancompactness} holds also true as well, i.e.
\begin{equation*}\label{morethancompactnessI}
T(u_{n})\to T(u_0) \quad \hbox{strongly in} \quad L^{\infty}(\Omega).
\end{equation*}

This proves the continuity of $T$ and completes the proof of Step 1.

\medskip

{\it Step 2: existence.} Let $u\in C_0(\overline{\Omega})$ and set $w:=\tilde{T}(u)$, i.e., $w= \LL\Bigl(\tilde{F}\bigl(\cdot,u(\cdot)\bigr)\Bigr)$. Estimate \eqref{est:resolvent2} with $\tilde{b}=\tilde{F}\bigl(\cdot,u(\cdot)\bigr)$ and inequality \eqref{growthF(x,u)0strongertilde} imply the existence of a constant $\hat{C}=\hat{C}(\underline{u},\overline{U},p,q,a,b)>0$ such that
$$
\|w\|_{L^{\infty}(\Omega)}
\leq   \hat{C} \big(1 + \|b\|_{L^{\frac{q}{q-p}}(\Omega)}+\|b\|_{L^{\frac{q}{q-p}}(\Omega)}^{\frac{1}{p-1}}\big).
$$

Setting 
$$
R_0:= \hat{C} \big(1 + \|b\|_{L^{\frac{q}{q-p}}(\Omega)}+\|b\|_{L^{\frac{q}{q-p}}(\Omega)}^{\frac{1}{p-1}}\big)
$$
we conclude that for every $u\in C_0(\overline{\Omega})$, 
$$
\|{T}(u)\|_{L^{\infty}(\Omega)}\leq R_0.
$$

\medskip
Fix $R>R_0$ and let $B_R$ denote the open ball in $C_0(\overline{\Omega})$ of radius $R>0$ and centered at the origin and notice that $\tilde{T}(B_R) \subset B_R$. 

\medskip
{\it Schauder's Fixed Point Theorem} yields the existence of $u \in B_R$ such that $u= T(u)$. Lemma \ref{supersolution} implies that $u\in W^{1,p}_0(\Omega,a)\cap C(\overline{\Omega})$ and solves \eqref{MODELDEGQUASILINEARTRUNC}.

\medskip

Finally, Lemma \ref{Slnstruncated=Slnsorig} implies that $u$ also solves \eqref{MODELDEGQUASILINEAR} and this completes the proof of Theorem \ref{multiplicitytheorem}.
\end{proof}


\section{Proof of theorem \ref{theo2}.}
In this final section we prove Theorem \ref{theo2}. Besides {\it (A1)-(A2)} and {\it (B1)-(B2)}, we assume also {\it (f1)-(f3)}. We adopt the notations and conventions from the previous sections.

\medskip
\begin{prop}\label{SUBSPERSLNS0}
For every $\la>0$, there exist $\overline{U}_{\la} \in W^{1,p}(\Omega,a)\cap C_0(\overline{\Omega})$ a supersolution to \eqref{DEGQUASILINEAR} with $\overline{U}_{\lambda}>0$ in $\Omega$. 
\end{prop}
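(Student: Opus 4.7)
The strategy is to take $\overline{U}_{\la}$ to be a sufficiently large constant multiple of the unique solution of an auxiliary weighted $p$-Laplacian problem with right-hand side $b(x)$. Concretely, I would consider
\begin{equation*}
-\rdiv\bigl(a(x)|\nabla \phi|^{p-2}\nabla \phi\bigr) \,=\, b(x) \quad \hbox{in } \Omega, \qquad \phi|_{\pp \Omega}=0.
\end{equation*}
Hypothesis {\it (B1)} yields $b\in L^{\frac{q}{q-p}}(\Omega)$, while \eqref{reductionH4} yields $a^{\frac{1}{p-1}}b\in L^{\infty}(\Omega_{\rho_1})$, so $b\in Y_a$. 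Consequently Lemma \ref{supersolution} provides a unique solution $\phi\in W^{1,p}_0(\Omega,a)\cap C_0(\overline{\Omega})$.

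Next, I would verify that $\phi>0$ in $\Omega$. Testing the weak form of the equation against $\phi^-\in W^{1,p}_0(\Omega,a)$, whose admissibility is guaranteed by Lemma \ref{positivenegativeparts}, and using that $b\geq 0$ a.e.\ in $\Omega$, one obtains $\int_{\Omega}a(x)|\nabla \phi^-|^{p}dx \leq 0$, and hence $\phi\geq 0$ in $\Omega$. Since $b\not\equiv 0$, one also has $\phi\not\equiv 0$. Thanks to {\it (A1)}, the operator is uniformly elliptic on every $D$ with $\overline{D}\subset \Omega$, so the classical strong maximum principle for the $p$-Laplacian applies locally; together with the connectedness of $\Omega$ this forces $\phi>0$ in $\Omega$.

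Now, for $A>0$ to be chosen, set $\overline{U}_{\la}:=A\phi$. The $(p-1)$-homogeneity of the operator gives
\begin{equation*}
-\rdiv\bigl(a(x)|\nabla \overline{U}_{\la}|^{p-2}\nabla \overline{U}_{\la}\bigr) \,=\, A^{p-1}\,b(x) \quad \hbox{in } \Omega.
\end{equation*}
Multiplying by a non-negative test function $\varphi\in W^{1,p}_0(\Omega,a)$ and integrating, the supersolution inequality \eqref{weakaslnMODELDEGQUASILINEAR2} for $\overline{U}_{\la}$ reduces to
\begin{equation*}
\int_{\Omega}b(x)\bigl[A^{p-1}-\la f(A\phi(x))\bigr]\varphi(x)\,dx\geq 0,
\end{equation*}
which holds whenever $A^{p-1}\geq \la f(A\phi(x))$ at a.e.\ point where $b(x)>0$. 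Introduce
\begin{equation*}
\sigma(R):=\sup_{0\leq t\leq R}\frac{\max(f(t),0)}{R^{p-1}}\quad \hbox{for }R>0.
\end{equation*}
Continuity of $f$ on $[0,\infty)$ combined with {\it (f1)} yield $\sigma(R)\to 0$ as $R\to \infty$. Since $0\leq A\phi(x)\leq A\|\phi\|_{L^{\infty}(\Omega)}$ for every $x\in\Omega$, one estimates $\la f(A\phi(x))\leq \la\,\sigma\bigl(A\|\phi\|_{L^{\infty}(\Omega)}\bigr)A^{p-1}\|\phi\|_{L^{\infty}(\Omega)}^{p-1}$. Thus $\overline{U}_{\la}$ is a supersolution as soon as $\la\,\sigma(A\|\phi\|_{L^{\infty}(\Omega)})\|\phi\|_{L^{\infty}(\Omega)}^{p-1}\leq 1$, which is arranged by choosing $A=A(\la)$ large enough. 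Since $\overline{U}_{\la}|_{\pp \Omega}=0$ and $\overline{U}_{\la}=A\phi>0$ in $\Omega$, the construction is complete.

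The main technical point I expect to treat with care is the strict positivity of $\phi$, since one must pass from a suitable strong maximum principle on interior subdomains, where uniform ellipticity holds by {\it (A1)}, to the whole of $\Omega$ via connectedness; the rest of the argument is a clean homogeneity computation exploiting $(p-1)$-sublinearity of $f$.
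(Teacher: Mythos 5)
Your construction is essentially the same as the paper's: take the solution $\psi$ (your $\phi$) of the auxiliary problem $-\rdiv\bigl(a(x)|\nabla \psi|^{p-2}\nabla \psi\bigr)=b(x)$, $\psi|_{\pp\Omega}=0$, furnished by Lemma \ref{supersolution}, scale it by a large constant, and use the $(p-1)$-sublinearity of $f$. Your $\sigma(R):=\sup_{0\leq t\leq R}\max(f(t),0)/R^{p-1}$ plays the same role as the paper's $\hat{f}(\zeta):=\max_{{\rm z}\in[0,\zeta]}f({\rm z})$, and your large-$A$ choice is the paper's large-$M$ choice; these are algebraically equivalent formulations of the same idea.

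The one place where your argument has a genuine gap is the strict positivity $\phi>0$. You correctly test against $\phi^-\in W^{1,p}_0(\Omega,a)$ to get $\phi\geq 0$, but then invoke ``the classical strong maximum principle for the $p$-Laplacian \ldots locally.'' That tool (V\'azquez) requires the solution to be $C^1$ in the relevant interior subdomain, which in turn requires the coefficient $a$ to be at least H\"older continuous (Tolksdorf). Here $a$ is only assumed to be in $L^\infty_{loc}(\Omega)$ and bounded away from zero on compact subsets, so on an interior ball one has uniform ellipticity but not enough regularity of the coefficient to upgrade Serrin's $C^{0,\alpha}$ to $C^{1}$; indeed the paper explicitly notes this failure of $C^1_{loc}$ regularity. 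The correct tool, which is what the paper uses, is the weak Harnack inequality for non-negative supersolutions of divergence-form equations with bounded measurable coefficients (cited as Theorem 1.9 in \cite{DRABEKKUFNERNICOLOSI}, in the spirit of \cite{SERRIN}). That inequality gives $\inf_{B_R}\phi>0$ on interior balls for $\phi\geq 0$, $\phi\not\equiv 0$, without any $C^1$ prerequisite, and combined with a covering/connectedness argument forces $\phi>0$ in $\Omega$. Replacing your strong-maximum-principle citation by the weak Harnack inequality makes your proposal match the paper's proof; the rest of your argument is sound.
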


\medskip

Although the proof of Proposition \ref{SUBSPERSLNS0} is rather standard, we include it here for the sake of completeness.

\medskip
\begin{proof} Let $\la>0$ be arbitrary. Set $\overline{U}=M\psi$, where $\psi$ is the solution to \eqref{equation4} with $\tilde{b}=b$ and $M>0$ is a parameter to be chosen later. Since $b(x)\geq 0$, from Harnack's inequality (see Theorem 1.9, \cite{DRABEKKUFNERNICOLOSI}), $\overline{U}$ is strictly positive inside $\Omega$ and from Lemma \ref{supersolution}, $\overline{U} \in W^{1,p}(\Omega,a) \cap C_0(\overline{\Omega})$.

\medskip
Consider the function
$$
\hat{f}({\zeta}):=\max_{{\rm z}\in[0,\zeta]}f({\rm z})
$$
which is monotone increasing, $\hat{f}\geq f$ and from {\it (f1)}, 
\begin{equation}\label{uppernonlinearity}
\limsup_{\zeta\to \infty}\frac{\hat{f}(\zeta)}{\zeta^{p-1}}=0.
\end{equation}

\medskip
Fix $\ep=\ep(\lambda)>0$ such that
\begin{equation}\label{superslninequality}
\ep \|\psi\|_{L^{\infty}(\Omega)}^{p-1}\leq \frac{1}{\la}.
\end{equation}

\medskip
Using \eqref{uppernonlinearity} we get the existence of $\zeta_{\lambda}>1$ such that for every $\zeta \geq \zeta_{\la}$,
$$
0\leq \hat{f}(\zeta)\leq \ep\zeta^{p-1}. 
$$

Choose $M=M(\la)$ satisfying 
\begin{equation}\label{Mlambda}
M\|\psi\|_{L^{\infty}} \geq \zeta_{\la},
\end{equation}
so  that
$$
\hat{f}\left(M \|\psi\|_{L^{\infty}(\Omega)}\right)\leq  \ep M^{p-1}\|\psi\|^{p-1}_{L^{\infty}(\Omega)}
$$
and from \eqref{superslninequality},
$$
\la \hat{f}(M \|\psi\|_{L^{\infty}(\Omega)})\leq M^{p-1}.
$$

\medskip
Then, formally, we compute
\begin{eqnarray*}
\la b(x)\hat{f}\left(M \|\psi\|_{L^{\infty}(\Omega)}\right) &\leq& M^{p-1} b(x)\\
&=&-\rdiv(a(x)|\nabla \overline{U}|^{p-2}\nabla \overline{U})
\end{eqnarray*}
so that $\overline{U}$ is a supersolution to \eqref{DEGQUASILINEAR}, proving the proposition.
\end{proof}

\begin{prop}\label{SUBSUPERSLNS}
Let $\Omega$ be a smooth bounded domain such that $\pp \Omega$ has non-negative mean curvature.
There exists $\lambda_{*}>0$ such that for every $\la\geq \la_{*}$, there exists $\underline{u}_{\la}\in W^{1,p}(\Omega,a)\cap C_0(\overline{\Omega})$ a subsolution to \eqref{DEGQUASILINEAR} with $\underline{u}_{\lambda}>0$ in $\Omega$.
\end{prop}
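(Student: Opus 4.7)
My plan is to exhibit $\underline{u}_\lambda$ of the form $\tau_\lambda \Psi$, where $\Psi$ is a fixed function built from the weighted distance ${\rm d}(x)$ introduced in \eqref{distweight}, and $\tau_\lambda > 0$ is a scaling parameter depending on $\lambda$. The key observation is that the non-negative mean curvature hypothesis forces $\pp_{y_N}\log\sqrt{\det g}\geq 0$ in the Fermi coordinates of Section \ref{2.3}, so a suitable power of ${\rm d}$ acts as a \emph{strong} subsolution of the homogeneous weighted $p$-Laplace equation near $\pp\Omega$.

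Concretely, I would take $\Psi(x) := {\rm d}(x)^{p/(p-1)}$; in Fermi coordinates this is the radial profile $V(y_N) := D(y_N)^{p/(p-1)}$ with $D(y_N) = \int_0^{y_N}{\rm a}^{-\frac{1}{p-1}}(\zeta)\,d\zeta$. A direct computation via Lemma \ref{pLapincoordinates} yields ${\rm a}(y_N)(V'(y_N))^{p-1} = (p/(p-1))^{p-1}D(y_N)$, hence
\[
-\rdiv\bigl(a(x)|\nabla \Psi|^{p-2}\nabla \Psi\bigr) = -\Bigl(\tfrac{p}{p-1}\Bigr)^{p-1}a^{-\frac{1}{p-1}}(x) - \pp_{y_N}\log\sqrt{\det g}\cdot\Bigl(\tfrac{p}{p-1}\Bigr)^{p-1}D(y_N) \leq -\gamma_p\, a^{-\frac{1}{p-1}}(x)
\]
in $\Omega_{\rho_0}$, where $\gamma_p := (p/(p-1))^{p-1}>0$ and the final inequality is forced by the sign of the mean curvature term. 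Next, extend $\Psi$ smoothly to $\tilde\Psi \in W^{1,p}(\Omega,a)\cap C_0(\overline{\Omega})$ with $\tilde\Psi > 0$ in $\Omega$, $\tilde\Psi = \Psi$ in $\Omega_{\rho_1/2}$ (with $\rho_1$ as in \eqref{reductionH4}), and $\tilde\Psi$ uniformly bounded on all of $\Omega$.

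Setting $\underline{u}_\lambda := \tau_\lambda \tilde\Psi$, one verifies \eqref{weakaslnMODELDEGQUASILINEAR2} region by region. On $\Omega_{\rho_1/2}$, the strongly negative bound above combined with $b \leq c_2\, a^{-\frac{1}{p-1}}$ from \textit{(B2)} reduces the subsolution inequality to $\tau_\lambda^{p-1}\gamma_p \geq \lambda c_2 |f(\tau_\lambda \tilde\Psi(x))|$ on the set where $f(\underline{u}_\lambda)<0$; since \textit{(f1)} and \textit{(f2)} together imply that $|f|$ is $(p-1)$-sublinear at infinity (the minimum of $f$ is attained, giving a lower bound), this inequality is arranged by choosing $\tau_\lambda$ large. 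On $\Omega\setminus\Omega_{\rho_1/2}$, where $\tilde\Psi$ is essentially constant and $-\rdiv(a|\nabla\tilde\Psi|^{p-2}\nabla\tilde\Psi)$ is bounded (vanishing wherever $\tilde\Psi$ is actually constant), the inequality reduces to a pointwise condition that holds for $\lambda$ large once $\tau_\lambda \tilde\Psi$ lies in a range where $f\geq 0$. The principal obstacle is precisely this simultaneous matching: $\tau_\lambda$ must be chosen large enough to dominate $|f|$ in the boundary layer while keeping $\tau_\lambda \tilde\Psi$ in a favorable range of $f$ in the interior; the $(p-1)$-sublinearity supplies exactly enough room to do this for all $\lambda\geq\lambda_*$, with $\lambda_*>0$ an explicit threshold. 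Positivity of $\underline{u}_\lambda$ in $\Omega$ and its vanishing on $\pp\Omega$ follow immediately from the corresponding properties of $\tilde\Psi$.
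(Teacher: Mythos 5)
Your proposal takes the same geometric route as the paper (a radially symmetric profile in ${\rm dist}(x,\partial\Omega)$, with the non-negative mean curvature hypothesis absorbing the $\partial_{y_N}\log\sqrt{\det g}$ term), but replaces the paper's genuinely $\lambda$-dependent two-piece profile by a single \emph{fixed} function $\tilde\Psi$ scaled by $\tau_\lambda$. The boundary-layer computation is correct: with $V(y_N)=D(y_N)^{p/(p-1)}$ one indeed gets ${\rm a}(V')^{p-1}=\gamma_p D$, and the sign of the curvature term gives $-\rdiv(a|\nabla\Psi|^{p-2}\nabla\Psi)\le -\gamma_p a^{-\frac{1}{p-1}}$ near $\partial\Omega$, and since $f$ attains its minimum the worst-case negativity of $f$ is a fixed constant $m$, so $\tau_\lambda^{p-1}\gamma_p\ge\lambda c_2 m$ handles the boundary layer with $\tau_\lambda\sim\lambda^{1/(p-1)}$.

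The gap is in the interior and in the way you claim to close the matching. Once $\tau_\lambda^{p-1}\sim\lambda$ is forced by the boundary layer, on the transition set where $\tilde\Psi$ interpolates between $\Psi$ and its constant value the term $-\tau_\lambda^{p-1}\rdiv\bigl(a|\nabla\tilde\Psi|^{p-2}\nabla\tilde\Psi\bigr)$ is generically positive and of size $O(\lambda)$, so the weak inequality \eqref{weakaslnMODELDEGQUASILINEAR2} there requires $\lambda\,b(x)\,f(\tau_\lambda\tilde\Psi(x))$ to be bounded below by a constant times $\lambda$. That needs two things you do not establish: a positive lower bound for $b$ on the transition set (in your setup the transition lives in $\Omega\setminus\Omega_{\rho_1/2}$, where \textit{(B2)} gives no lower bound unless you keep the transition inside $\Omega_{\rho_1}$ and invoke \eqref{reductionH4}), and, more seriously, a positive lower bound $f(\zeta)\ge c_0>0$ for all large $\zeta$. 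Your statement that ``the $(p-1)$-sublinearity supplies exactly enough room'' is pointing the wrong way: \textit{(f1)} is an \emph{upper} bound on $f$ and cannot produce the positive lower bound you need, and \textit{(f2)} gives only a negative minimum, not eventual positivity. The paper avoids this entirely by a more elaborate construction: the profile equals $B(\rho)D^\beta$ with $\beta<p/(p-1)$ on $(0,\rho(\lambda))$, then the ODE-solution $\phi$ of \eqref{gluingfunct} on $(\rho(\lambda),\rho_1)$, then a constant beyond; the boundary condition ${\rm a}(\rho_1)|\partial_{y_N}\phi(\rho_1)|^{p-2}\partial_{y_N}\phi(\rho_1)=0$ in \eqref{gluingfunctbdcond} makes the flux vanish at $\rho_1$, so the weighted $p$-Laplacian is \emph{exactly zero} in $\Omega\setminus\Omega_{\rho_1}$ and there is nothing to dominate there, while on $(\rho,\rho_1)$ the ODE is designed so that $-\rdiv\le\lambda^{1-\sigma(p-1-r)}{\rm a}^{-\frac{1}{p-1}}$ is controlled pointwise by $\lambda b\,\tilde f(\lambda^\sigma\phi)$ through the minorant $\tilde f$ chosen with $\tilde f(\zeta)\sim\mu\zeta^r$. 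That minorant is what provides the quantitative lower bound on the right-hand side; to make your simpler fixed-profile argument rigorous you would have to introduce a similar minorant and verify the transition-region inequality explicitly, rather than appealing to sublinearity.
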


\begin{proof} First, we describe the general strategy. Let $r\in (0,p-1)$ be as in {\it (f2)}. Thanks to {\it (f1)}, {\it (f2)} and {\it (f3)}, we can also fix a monotone increasing function $\tilde{f}\in C[0,\infty)$ such that 
$$
\tilde{f}\leq f, \quad -\infty<\tilde{f}(0)< f(0)\leq 0
$$ 
and, moreover, satisfying that
\begin{equation}\label{fsubslnmodf}
\mu:=\lim_{\zeta \to \infty}\frac{\tilde{f}(\zeta)}{\zeta^{r}} \in (0,\infty).
\end{equation}

Thus, it suffices to find a subsolution for the BVP 
\begin{equation}\label{def:sublinearBVP}
-{\rm div}\bigl(a(x)|\nabla u|^{p-2}\nabla u\bigr)=\lambda b(x)\tilde{f}(u) \quad \hbox{in} \quad \Omega, \qquad u=0 \quad \hbox{on} \quad \partial \Omega. 
\end{equation}

To do so, we proceed in a series of steps. In what follows, we fix $\beta$ and  $\sigma$ such that 
\medskip 
\begin{equation}\label{exponentssubslntilde}
\frac{1}{p-1}<\sigma<\frac{1}{p-1-r} \quad \hbox{and}\quad 1<\beta<1+ \frac{1}{p-1}.
\end{equation}

{\it Step 1: profile near $\partial \Omega$.} Let $\rho_1\in (0,\rho_0)$ be as in \eqref{reductionH4}. Since $\pp \Omega$ has non-negative mean curvature, we may also assume that $\rho_1$ is such that
 \begin{equation}\label{positivemeancurvature}
\Lambda:=\min \limits_{\substack{\y \in \pp \Omega, \\ 0\leq y_N\leq \rho_1}}\pp_{y_N}\log\left(\sqrt{\det g}\right)\geq 0.
\end{equation}

Consider the decreasing function $A:\big[\frac{\rho_1}{2},\rho_1\big)\to (0,\infty)$ defined by
\begin{equation}\label{formulaAB}
A(\rho):=\beta^{p-1}\left[\int_0^{\rho}{\sf a}^{-\frac{1}{p-1}}(\tau)d\tau\right]^{-(p-1)}.
\end{equation}

We claim that there exists $\hat{\la}_1>0$ such that for every $\la \geq \hat{\la}_1$, there exists $\rho = \rho(\la)\in (\frac{\rho_1}{2},\rho_1)$ satisfying that
\begin{equation}\label{deratrhoI}
e^{\Lambda \rho}A(\rho)= \la^{1 - \sigma(p-1 - {r})}\int_{\rho}^{\rho_1}e^{-\Lambda \tau}{\sf a}^{-\frac{1}{p-1}}(\tau) d\tau.
\end{equation}

To prove the claim, consider the positive function 
$$
\left[\frac{\rho_1}{2},\rho_1\right)\ni \rho \mapsto {\rm t}(\rho):=\frac{\e^{\Lambda \rho}A(\rho)}{\int_{\rho}^{\rho_1}e^{\Lambda \tau}{\sf a}^{-\frac{1}{p-1}}(\tau) d\tau}
$$
so that ${\rm t}\in C\big[\frac{\rho_1}{2},\rho_1\big)$ and ${\rm t}(\rho)\to + \infty$ as $\rho \to \rho_1$. Let $\hat{\rho} \in \big[\frac{\rho_1}{2},\rho_1)$ be such that $
{\rm t}(\hat{\rho}):= \min \limits_{\rho \in \big[\frac{\rho_1}{2},\rho_1\big)} {\rm t}(\rho)$ and notice that ${\rm t}(\hat{\rho})>0$.

\medskip
Setting
\begin{equation*}\label{lambda*firstchoice}
\hat{\la}_1 := {\rm t}(\hat{\rho}) ^{\frac{1}{1 - \sigma(p-1-r)}},
\end{equation*}
we observe that the claim follows by a direct application of the {\it Intermediate Value Theorem}.

\medskip
Next, we consider the function $\rho:[\hat{\lambda}_1, \infty)\to [\frac{\rho_1}{2},\rho_1)$, where $\rho=\rho(\lambda)$ solves \eqref{deratrhoI}. 

\medskip

Next, let $\la \geq \hat{\la}_1$. Write $\rho=\rho(\la)$ and consider the function $\phi:[\rho,\rho_1]\to (0,\infty)$ defined by
\begin{equation}\label{formulaphi}
\phi(y_N):= 1 + \int_{\rho}^{y_N} e^{-\frac{\Lambda \zeta}{p-1}}{\sf a}^{-\frac{1}{p-1}}(\zeta)\left[e^{\Lambda \rho} A(\rho)- \la^{1 -\sigma(p-1-r)}\int_{\rho}^{\zeta}e^{\Lambda \tau}{\sf a}^{-\frac{1}{p-1}}(\tau)d\tau\right]^{\frac{1}{p-1}}d\zeta.
\end{equation}

Using \eqref{hyp:rma} we verify that $\phi$ is well defined, differentiable and ${\sf a}(y_N)|\phi|^{p-1}$ is absolutely continuous in $(\rho,\rho_1)$.

\medskip
On the other hand, \eqref{deratrhoI} yields that for any $y_N \in (\rho,\rho_1)$, 
\begin{equation}\label{deratrhoII}
e^{\Lambda \rho}A(\rho)> \la^{1 - \sigma(p-1 - {r})}\int_{\rho}^{y_N}e^{\Lambda \tau}{\sf a}^{-\frac{1}{p-1}}(\tau) d\tau.
\end{equation}

Thus, from \eqref{formulaphi} and \eqref{deratrhoII},
\begin{equation}\label{gluingfunct0}
\phi(\rho)=1\quad \hbox{and}\quad {\sf a}(y_N)|\pp_{y_N}\phi|^{p-2}\pp_{y_N}\phi> 0 \quad   \hbox{in} \quad (\rho,\rho_1).
\end{equation}

Directly from \eqref{formulaphi},
\begin{equation}\label{gluingfunct}
-\pp_{y_N}\left(e^{\Lambda y_N}{\sf a}(y_N)|\pp_{y_N}\phi|^{p-2}\pp_{y_N}\phi\right) = \la^{1 -\sigma(p-1-{r})}\,e^{\Lambda y_N}{\sf a}^{-\frac{1}{p-1}}(y_N) \quad \hbox{in} \quad (\rho,\rho_1)
\end{equation}
with the boundary conditions
\begin{equation}\label{gluingfunctbdcond}
{\sf a}(\rho)|\pp_{y_N}\phi(\rho)|^{p-2}\pp_{y_N}\phi(\rho)= A(\rho),  \quad {\sf a}(\rho_1)|\pp_{y_N}\phi(\rho_1)|^{p-2}\pp_{y_N}\phi(\rho_1)=0.
\end{equation}

\medskip

{\it Step 2: candidate for a subsolution of \eqref{def:sublinearBVP}.} Consider the decreasing function $B:\big[\frac{\rho_1}{2},\rho_1\big)\to (0,\infty)$ defined by
\begin{equation*}\label{formulaABI}
B(\rho):= \left[\int_{0}^{\rho}{\sf a}^{-\frac{1}{p-1}}(\zeta)d\zeta\right]^{-\beta}
\end{equation*}
and recall that for $\lambda \in [\hat{\lambda}_1,\infty)$, we write $\rho=\rho(\lambda)$. 

\medskip
Next, consider the function ${\rm v}:[0,\infty) \to \R$ defined by
\begin{equation}\label{definitrmv}
{\rm v}(y_N)=\left\{
\begin{array}{ccc}
B(\rho)\left(\int_0^{y_N} {\sf a}^{-\frac{1}{p-1}}(\zeta)d\zeta \right)^{\beta},&0 \leq y_N \leq \rho,\\ 
\phi(y_N),& \rho< y_N<\rho_1,\\
  \phi(\rho_1),&y_N\geq \rho_1.
 \end{array}
 \right.
\end{equation}

From \eqref{formulaphi}, \eqref{gluingfunctbdcond} and \eqref{definitrmv}, ${\rm v}\in C[0,\infty)\cap L^{\infty}(0,\infty)$ and ${\sf a}(y_N)|\pp_{y_N}{\rm v}|^{p-}\pp_{y_N}{\rm v}\in L^1(0,\infty)$ and it is absolutely continuous in $(0,\infty)$. 

\medskip
Consider the Fermi coordinates (see Section 2.3)
$$
(\y, y_N)\in \pp \Omega \times [0,\rho_1] \mapsto  x= \y+ \,y_N\,{\rm n}(\y)\in \overline{\Omega}_{\rho_1}
$$ 
with {\it induced metric} denoted by $g$ and associated {\it Jacobian determinant} $\sqrt{\det g}$. Recall that $y_N={\rm dist}(x,\pp \Omega)$.

Set 
\begin{equation}
\label{def:upsilonv}
\upsilon(x)=\left\{
\begin{array}{ccc}
{\rm v}({\rm dist}(x, \pp \Omega)) ,&x \in \overline{\Omega}_{\rho_1},\\
  {\rm v}(\rho_1),& x\in \Omega - \overline{\Omega}_{\rho_1}. 
 \end{array}
 \right.
\end{equation}

The properties of ${\rm v}$ imply that $\upsilon \in W^{1,p}(\Omega,a)\cap C_0(\overline{\Omega})$. 

\medskip
Omitting the explicit dependence on $\la$, set $\underline{u}=\la^{{\sigma}}\upsilon$ in $\Omega$. 
Abusing the notation, write $\underline{u}(x)=\underline{u}(y_N)$ in $\overline{\Omega}_{\rho_1}$. Next, we compute $a(x)|\nabla \underline{u}|^{p-2}\nabla \underline{u}$ in Fermi coordinates using Lemma \ref{pLapincoordinates}. From  \eqref{formulaAB}, \eqref{formulaphi} and \eqref{definitrmv}, 
\begin{equation}\label{derivubar}
\begin{small}
\begin{aligned}
{\sf a}(y_N)|\pp_{y_N}\underline{u}|^{p-2}\pp_{y_N}\underline{u}=&\la^{\sigma(p-1)}\,{\sf a}(y_N)\,|\pp_{y_N}\upsilon|^{p-2}\pp_{y_N}\upsilon
\\
=&\left\{
\begin{aligned}
\la^{{\sigma}(p-1)}B(\rho)^{p-1}{\beta}^{p-1}\left[\int_0^{y_N}{\sf a}^{-\frac{1}{p-1}}(\zeta)d\zeta\right]^{(\beta -1)(p-1)},&\quad  0<y_N \leq \rho,
\\
\la^{\sigma(p-1)}{\sf a}(y_N)|\pp_{y_N}\phi|^{p-2}\pp_{y_N}\phi,& \quad \rho < y_N< \rho_1.
\end{aligned}
\right.
\end{aligned}
\end{small}
\end{equation}

{From \eqref{gluingfunct}, \eqref{gluingfunctbdcond} and \eqref{derivubar},} we conclude that ${\sf a}(y_N)|\pp_{y_N}\underline{u}|^{p-2}\pp_{y_N}\underline{u}$ is absolutely continuous in $(0,\rho_1)$.

\medskip
{\it Step 3: our candidate is indeed a subsolution of \eqref{def:sublinearBVP}.} We now claim that there exists ${\lambda}_{*} \geq \hat{\la}_1$ such that for any $\la \geq {\la}_{*}$, $\underline{u}$ is a subsolution to \eqref{def:sublinearBVP}. To prove this claim, we proceed as follows.

\medskip
Let $\varphi\in C_c^{\infty}(\Omega)$, with $\varphi \geq 0$ in $\Omega$, be arbitrary. Abusing the notation, write in coordinates $\varphi(x)=\varphi(\y,y_N)$ in $\Omega_{\rho_1}$. 

\medskip
Using Lemma \ref{pLapincoordinates} and integrating by parts, we find that 
\begin{eqnarray*}
\int_{\Omega}a(x)|\nabla \underline{u}|^{p-2}\nabla \underline{u}\cdot \nabla \varphi dx&=&\int_{\pp \Omega}\int_{0}^{\rho_1}{\sf a}(y_N)|\pp_{y_N}\underline{u}|^{p-2}\pp_{y_N}\underline{u}\,\pp_{y_N}\varphi \, \sqrt{\det g}\, dy_N\, d\y\\
\\
&=&\underbrace{\int_{\pp \Omega}{\sf a}(y_N)[\pp_{y_N}\underline{u}(y_N)]^{p-1}\varphi(\y,y_N)\sqrt{\det g(\y,y_N)}\,\,\big{|^{y_N= \rho_1}_{y_N=0}}\, d\y}_{I} \quad \\
&& \underbrace{-\int_{\pp \Omega}\int_{0}^{\rho_1}\pp_{y_N} \left(\sqrt{{\rm det}g}\, {\sf a}(y_N) |\pp_{y_N} \underline{u}|^{p-2}\pp_{y_N}\underline{u}\right)\varphi \,dy_n\, d\y}_{II}.
\end{eqnarray*}

\medskip
From \eqref{gluingfunctbdcond} and since $\varphi \in C_c^{\infty}(\Omega)$, $I=0$ and therefore
\begin{equation}\label{estimaateI}
\int_{\Omega}a(x)|\nabla \underline{u}|^{p-2}\nabla \underline{u}\cdot \nabla \varphi dx =II.
\end{equation}

Next, we estimate $II$. Observe that
$$
\begin{aligned}
II=&\int_{\pp \Omega}\int_{0}^{\rho_1}-\frac{1}{\sqrt{{\rm det}g}}\pp_{y_N} \left(\sqrt{{\rm det}g}\, {\sf a}(y_N) |\pp_{y_N} \underline{u}|^{p-2}\pp_{y_N}\underline{u}\right)\varphi \, \sqrt{\det g}\, dy_N\, d\y\\
=& \underbrace{\int_{\pp \Omega}\int_{0}^{\rho}\cdots  \sqrt{\det g}\, dy_N\, d\y}_{II_1} + \underbrace{\int_{\pp \Omega}\int_{\rho}^{\rho_1}\cdots\, \sqrt{\det g}\, dy_N\, d\y}_{II_2},
\end{aligned}
$$
where we remark again that $\rho=\rho(\lambda)$ and $\rho(\lambda) \in [\frac{\rho_1}{2},\rho_1)$.

We estimate first $II_1$. The choices of $\sigma$ and $\beta$ in \eqref{exponentssubslntilde}, allow us to fix ${\la}_{*}\geq \hat{\la}_1$ such that
\begin{equation}\label{lambda**}
\underbrace{{\la_{*}^{1 - \sigma(p-1)}}c_2\bigl(-\tilde{f}(0)\bigr)}_{>0} \leq \beta^{p-1}B\bigl(\rho_1\bigr)^{(p-1)}(\beta -1)(p-1) \left[\int_0^{\rho_1}{\sf a}^{-\frac{1}{p-1}}(\zeta)d\zeta\right]^{(\beta -1)(p-1)-1},
\end{equation}
where $c_2>0$ is described in \eqref{reductionH4}.

\medskip

From \eqref{positivemeancurvature},\eqref{lambda**} and the fact that $B(\rho)$ is decreasing, we obtain for every $\la \geq {\la}_{*}$ and for every $y_N \in \bigl(0,\rho(\la)\bigr)$ that
\begin{multline}\label{almostthere}
-\la c_2 {\sf a}^{-\frac{1}{p-1}}(y_N)\tilde{f}(0)  \\
\leq 
\la^{\sigma(p-1)}\beta^{p-1}B(\rho)^{(p-1)} \left((\beta -1)(p-1)\left[\int_0^{y_N}{\sf a}^{-\frac{1}{p-1}}(\zeta)d\zeta\right]^{(\beta -1)(p-1)-1}{\sf a}^{-\frac{1}{p-1}}(y_N) \right. \\+
\left.\left[\int_0^{y_N}{\sf a}^{-\frac{1}{p-1}}(\zeta)d\zeta\right]^{(\beta -1)(p-1)}\Lambda \right).
\end{multline}

Consequently, \eqref{positivemeancurvature} and \eqref{almostthere} yield that
$$
\begin{aligned}
-\la c_2 {\sf a}^{-\frac{1}{p-1}}(y_N)\tilde{f}(0)  
\leq & \la^{\sigma(p-1)}\beta^{p-1}B(\rho)^{(p-1)} \left((\beta -1)(p-1)\left[\int_0^{y_N}{\sf a}^{-\frac{1}{p-1}}(\zeta)d\zeta\right]^{(\beta -1)(p-1)-1}{\sf a}^{-\frac{1}{p-1}}(y_N) \right. \\
&\hspace{5.3cm}+\left.\left[\int_0^{y_N}{\sf a}^{-\frac{1}{p-1}}(\zeta)d\zeta\right]^{(\beta -1)(p-1)}\pp_{y_N}\log (\sqrt{{\rm \det} g}) \right)\\
= & \frac{\la^{\sigma(p-1)}\beta^{p-1}B(\rho)^{(p-1)}}{\sqrt{{\rm det} g}} \pp_{y_N}\left(\sqrt{{\rm det}g}\left[\int_0^{y_N}{\sf a}^{-\frac{1}{p-1}}(\zeta)d\zeta\right]^{(\beta -1)(p-1)}\right).
\end{aligned}
$$

From the previous inequality and \eqref{derivubar} we deduce that
\begin{equation}\label{subsolution01}
-\frac{1}{\sqrt{{\rm det}g}}\pp_{y_N} \left(\sqrt{{\rm det}g}\, {\sf a}(y_N) |\pp_{y_N} \underline{u}|^{p-2}\pp_{y_N}\underline{u}\right) \leq \la c_2{\sf a}^{-\frac{1}{p-1}}(y_N)\tilde{f}(0) \quad \hbox{in} \quad \pp\Omega \times (0,\rho).
\end{equation}

Using \eqref{subsolution01} and the fact that $\varphi  \geq 0$,
$$
\begin{aligned}
II_1= &\int_{\pp \Omega}\int_{0}^{\rho}-\frac{1}{\sqrt{{\rm det}g}}\pp_{y_N} \left(\sqrt{{\rm det}g}\, {\sf a}(y_N) |\pp_{y_N} \underline{u}|^{p-2}\pp_{y_N}\underline{u}\right)\varphi \, \sqrt{\det g}\, dy_N\, d\y\\
\leq & \int_{\pp \Omega} \int_0^{\rho}\la c_2 {\sf a}^{-\frac{1}{p-1}}(y_N)\tilde{f}(0) \varphi \, \sqrt{\det g}\, dy_N\, d\y.
\end{aligned}
$$

Pulling back the change of variables, 
\begin{equation}\label{subslnMAIN0}
II_1\leq  \int_{\Omega_{\rho}} \la c_2 a^{-\frac{1}{p-1}}(x)\tilde{f}(0)\varphi dx\leq 0.
\end{equation}

It then follows from \eqref{reductionH4} and the fact that $\tilde{f}$ is non-decreasing that
\begin{equation}\label{subslnMAIN1}
II_1\leq  \int_{\Omega_{\rho}} \la b(x)\tilde{f}(\underline{u})\varphi dx.
\end{equation}

Next, we estimate the integral $II_2$. Since $\underline{u}(y_N)=\lambda^{\sigma}\upsilon(y_N)=\lambda^{\sigma}\phi(y_N)$ for $y_N\in (\rho,\rho_1)$, we have
$$
\begin{aligned}
II_2=&\la^{\sigma(p-1)}\int_{\pp \Omega}\int_{\rho}^{\rho_1}-\frac{1}{\sqrt{{\rm det}g}}\pp_{y_N} \left(\sqrt{{\rm det}g}\, {\sf a}(y_N) |\pp_{y_N} \phi|^{p-2}\pp_{y_N}\phi\right)\varphi \, \sqrt{\det g}\, dy_N\, d\y.
\end{aligned}
$$

From \eqref{gluingfunct0}, the function $\phi(y_N)$, $y_N\in [\rho,\rho_1)$, is strictly increasing, bounded and $\phi(\rho)=1$. This fact and \eqref{gluingfunct} imply that \begin{equation*}\label{differentialinequality}
\begin{aligned}
-\frac{1}{\sqrt{\det g}}\pp_{y_N}\left(\sqrt{\det g}\,{\sf a}(y_N)\, \,|\pp_{y_N}\phi|^{p-2}\pp_{y_N}\phi\right) &\leq &  \la^{1 -\sigma(p-1-{r})}{\sf a}^{-\frac{1}{p-1}}(y_N)\\
&\leq& \la^{1 -\sigma(p-1-{r})}{\sf a}^{-\frac{1}{p-1}}(y_N) \phi^{{r}} 
\end{aligned}
\end{equation*}
in $(\rho,\rho_1)$. Therefore,
$$
II_2 \leq \la^{1 - \sigma(p-1)+ \sigma{r}}\int_{\pp \Omega}\int_{\rho}^{\rho_1} {\sf a}^{-\frac{1}{p-1}}(y_N) \phi^{{r}}  \varphi \, \sqrt{\det g}\, dy_N\, d\y.
$$

Pulling back the change of variables and using \eqref{reductionH4} and the definition of $\upsilon$ in \eqref{def:upsilonv}, we find that
\begin{equation}\label{inequalityphi1}
\begin{aligned}
II_2 \leq & \frac{\la^{1- \sigma(p-1) }}{c_1}\int_{\Omega_{\rho_1} - \overline{\Omega}_{\rho}}b(x) \la^{\sigma{r}}\phi^{{r}}  \varphi dx\\
\leq &\frac{\la^{1- \sigma(p-1) }}{c_1}\int_{\Omega - \overline{\Omega}_{\rho}}b(x) \la^{\sigma{r}}\upsilon^{{r}}  \varphi dx.
\end{aligned}
\end{equation}

We finish estimating the integral $II_2$ as follows. Fix $\ep \in (0,\mu)$, {where $\mu>0$ is as in \eqref{fsubslnmodf}}. Taking ${\la}_{*}$ larger if necessary, we may assume that 
\begin{equation}\label{lambda***}
\frac{{\la}_{*}^{-\sigma(p-1)}}{c_1(\mu-\ep)} \leq 1.
\end{equation}

Moreover, thanks to \eqref{fsubslnmodf} and the fact that $\upsilon \geq 1$ in $\Omega_{\rho_1}-\overline{\Omega}_{\rho}$, we may also assume that for every $\la \geq {\la}_{*}$, 
\begin{equation}\label{growthineq}
(\mu-\ep)\la^{\sigma {r}} \upsilon^{{r}} \leq \tilde{f}(\la^{\sigma} \upsilon) \quad \hbox{in} \quad \Omega_{\rho_1} -\overline{\Omega}_{\rho}.
\end{equation}

From \eqref{inequalityphi1}, \eqref{lambda***} and \eqref{growthineq},
\begin{equation}\label{subslnMAIN2}
\begin{aligned}
II_2 \leq& \int_{\Omega - \overline{\Omega}_{\rho}} \la^{1-\sigma(p-1)} b(x) \la^{\sigma r}\upsilon^r \varphi dx\\
\leq &\frac{\la^{1-\sigma(p-1)}}{c_1(\mu -\ep)}\int_{\Omega - \overline{\Omega}_{\rho}}b(x) \tilde{f}(\la^{\sigma}\upsilon) \varphi dx\\
\leq & \la\int_{\Omega - \overline{\Omega}_{\rho}}b(x) \tilde{f}(\underline{u})  \varphi dx
\end{aligned}
\end{equation}
for every $\lambda \geq {\la}_{*}$.

\medskip
Finally, putting together \eqref{subslnMAIN1} and  \eqref{subslnMAIN2}, for every $\la \geq {\la}_{*}$,
$$
\int_{\Omega}a(x)|\nabla \underline{u}|^{p-2}\nabla \underline{u} \cdot \nabla \varphi dx \leq \int_{\Omega}\la b(x)\tilde{f}(\underline{u})\varphi dx.
$$

Since $\varphi \in C^{\infty}_c(\Omega)$ with $\varphi \geq 0$ in $\Omega$ is arbitrary, we conclude that for any $\la \geq \la_{*}$, $\underline{u} \in W^{1,p}(\Omega,a)\cap C_0(\overline{\Omega})$ is a subsolution of \eqref{DEGQUASILINEAR}. This complete the proof of the proposition.
\end{proof}

\medskip
\begin{prop}\label{ordersubsuperslns}
Under the same assumptions as in Proposition \ref{SUBSUPERSLNS}, there exists $\la_0>0$ such that for every $\la \geq \la_0$, there exist $\underline{u}_{\la},\overline{U}_{\la}$ a positive subsolution and a positive super solution to \eqref{DEGQUASILINEAR} in $\overline{\Omega}$ with $\underline{u}_{\la} \leq \overline{U}_{\la}$ in $\overline{\Omega}$.
\end{prop}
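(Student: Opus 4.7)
My plan is to take $\underline{u}_\lambda$ to be the subsolution furnished by Proposition \ref{SUBSUPERSLNS} for $\lambda \geq \lambda_{*}$, and to choose as supersolution $\overline{U}_\lambda := M(\lambda)\psi$, with $\psi$ the function produced in Proposition \ref{SUBSPERSLNS0} and with the multiplicative constant $M(\lambda)$ taken larger than in that proposition so as to force $\underline{u}_\lambda \leq \overline{U}_\lambda$ pointwise in $\overline{\Omega}$. A useful observation is that enlarging $M$ never destroys the supersolution property: since $\hat{f}$ is monotone and $\hat{f}(\zeta)/\zeta^{p-1}\to 0$ by {\it (f1)}, the inequality $\lambda\,\hat{f}(M\|\psi\|_{L^\infty(\Omega)}) \leq M^{p-1}$ persists for every $M$ above some $\lambda$-dependent threshold $M^{*}(\lambda)$.

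The first task is to bound $\|\underline{u}_\lambda\|_{L^\infty(\Omega)} = \lambda^\sigma \phi(\rho_1)$ uniformly in $\lambda$. Substituting \eqref{deratrhoI} into \eqref{formulaphi} and integrating by parts with $J(\zeta):= \int_\zeta^{\rho_1}e^{\Lambda\tau}{\rm a}^{-\frac{1}{p-1}}(\tau)\,d\tau$ yields
\begin{equation*}
\phi(\rho_1)-1 \;\leq\; \frac{p-1}{p}\,\lambda^{-(1-\sigma(p-1-r))}\bigl(e^{\Lambda\rho}A(\rho)\bigr)^{p/(p-1)}.
\end{equation*}
Since $\rho(\lambda)\in [\rho_1/2,\rho_1)$ keeps $A(\rho)$ bounded and $1-\sigma(p-1-r)>0$, one obtains $\phi(\rho_1)\to 1$ as $\lambda\to\infty$; hence $\|\underline{u}_\lambda\|_{L^\infty(\Omega)}\leq C_1\lambda^\sigma$ for some $C_1$ independent of $\lambda\geq \lambda_{*}$. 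Next, Harnack's inequality applied to the positive weak solution $\psi$ provides $c^{\sharp}:= \min\{\psi(x):\rdist(x,\pp\Omega)\geq \rho_1/2\}>0$. Combining $\rho(\lambda)\geq \rho_1/2$ with the choice $M(\lambda)\geq C_1\lambda^\sigma/c^{\sharp}$ then yields $\overline{U}_\lambda\geq \underline{u}_\lambda$ throughout $\Omega\setminus \Omega_{\rho(\lambda)}$ and, in particular, on the interior level set $\{\rdist(\cdot,\pp\Omega)=\rho(\lambda)\}$, where $\underline{u}_\lambda=\lambda^\sigma$ because $\upsilon(\rho(\lambda))=1$.

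For the boundary region $\Omega_{\rho(\lambda)}$ I would invoke a weak comparison argument, since a lower bound of the form $\psi\gtrsim \rdist(\cdot,\pp\Omega)$ is not a priori available. The computations in the proof of Proposition \ref{SUBSUPERSLNS} (see \eqref{subsolution01} and the derivation of \eqref{subslnMAIN0}) yield
\begin{equation*}
-\rdiv\bigl(a|\nabla \underline{u}_\lambda|^{p-2}\nabla \underline{u}_\lambda\bigr)\;\leq\; \lambda c_2\,a^{-\frac{1}{p-1}}\tilde{f}(0)\;\leq\; 0 \;\leq\; M^{p-1}b\;=\; -\rdiv\bigl(a|\nabla \overline{U}_\lambda|^{p-2}\nabla \overline{U}_\lambda\bigr)
\end{equation*}
pointwise in $\Omega_{\rho(\lambda)}$, the strict negativity of the middle terms being exactly the semipositone fact $\tilde{f}(0)<0$. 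Since $\underline{u}_\lambda\leq \overline{U}_\lambda$ on $\pp\Omega_{\rho(\lambda)}$ (both vanish on $\pp\Omega$ and the level-set comparison was just secured), Lemma \ref{approxbyC1uptobdry} places $(\underline{u}_\lambda-\overline{U}_\lambda)^+\in W^{1,p}_0(\Omega_{\rho(\lambda)},a)$. Testing the two weak inequalities above against this function and using the strict monotonicity of $z\mapsto |z|^{p-2}z$ (exactly as at the end of the proof of Proposition \ref{BDCONDITION}) forces $(\underline{u}_\lambda-\overline{U}_\lambda)^+\equiv 0$ in $\Omega_{\rho(\lambda)}$.

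To conclude, I would set $M(\lambda) := \max\bigl(M^{*}(\lambda),\,C_1\lambda^\sigma/c^{\sharp}\bigr)$ and pick $\lambda_0\geq \lambda_{*}$ large enough that all the preceding estimates hold (in particular that $\phi(\rho_1)\leq 2$). The principal obstacle is the absence of a natural lower bound for $\psi$ near $\pp\Omega$; the device of reducing the boundary comparison to a weak comparison inside $\Omega_{\rho(\lambda)}$ sidesteps this entirely, because in that subdomain the only quantitative information about $\psi$ needed is its uniform positive lower bound on the fixed level set $\{\rdist(\cdot,\pp\Omega)=\rho(\lambda)\}$, which is supplied by Harnack.
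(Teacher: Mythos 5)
Your argument is correct, but it is organised differently from the paper's, which is worth noting. The paper treats the whole domain at once: starting from the estimates \eqref{subslnMAIN0} and \eqref{inequalityphi1} it drops the (nonpositive) contribution from $\Omega_{\rho}$ and majorises the contribution from $\Omega\setminus\Omega_{\rho}$ by $\frac{\lambda^{1-\sigma(p-1-r)}}{c_1}\int_{\Omega} b\,\upsilon^{r}\varphi\,dx$, so that testing the weak inequalities for $\underline{u}_\lambda$ and $\overline{U}_\lambda$ against $(\underline{u}_\lambda-\overline{U}_\lambda)^+\in W^{1,p}_0(\Omega,a)$ and choosing $M(\lambda)$ with $c_1\lambda^{\sigma(p-1-r)-1}M^{p-1}\geq \phi^{r}(\rho_1)$ immediately kills $(\underline{u}_\lambda-\overline{U}_\lambda)^+$ on all of $\Omega$. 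This avoids the domain decomposition, the Harnack bound on $\psi$ over $\{\rdist(\cdot,\pp\Omega)\geq\rho_1/2\}$, and any uniform-in-$\lambda$ control of $\phi(\rho_1)$ (the paper simply lets $M$ absorb $\phi(\rho_1)$ for each fixed $\lambda$). Your route is more elaborate but still sound: the interior pointwise comparison via Harnack handles $\Omega\setminus\Omega_{\rho(\lambda)}$, the weak comparison in $\Omega_{\rho(\lambda)}$ uses the sign of $\tilde{f}(0)$ exactly as the paper does implicitly, and the passage of $(\underline{u}_\lambda-\overline{U}_\lambda)^+$ to $W^{1,p}_0(\Omega_{\rho},a)$ mirrors the cut-off argument already carried out in the proof of Proposition \ref{BDCONDITION}. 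Two small remarks: your intermediate inequality
\begin{equation*}
\phi(\rho_1)-1\leq \frac{p-1}{p}\,\lambda^{-(1-\sigma(p-1-r))}\bigl(e^{\Lambda\rho}A(\rho)\bigr)^{\frac{p}{p-1}}
\end{equation*}
is correct (using $e^{-\frac{p\Lambda\zeta}{p-1}}\leq 1$ and $-J'(\zeta)=e^{\Lambda\zeta}{\rm a}^{-\frac{1}{p-1}}(\zeta)$, and noting that \eqref{deratrhoI} carries a typographical sign error $e^{-\Lambda\tau}$ that should read $e^{\Lambda\tau}$, consistent with \eqref{deratrhoII} and the definition of ${\rm t}(\rho)$), and it adds a quantitative piece of information, $\phi(\rho_1)\to 1$ as $\lambda\to\infty$, that the paper does not record; and second, Lemma \ref{approxbyC1uptobdry} is stated for $\Omega$ rather than for $\Omega_{\rho}$, so strictly you should repeat the cut-off splitting from Proposition \ref{BDCONDITION} rather than cite the lemma verbatim, since $a$ only satisfies (A2) relative to $\rdist(\cdot,\pp\Omega)$, not $\rdist(\cdot,\pp\Omega_{\rho})$, near the inner component of $\pp\Omega_{\rho}$.
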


\begin{proof}
We track down the proofs of Propositions \ref{SUBSPERSLNS0} and \ref{SUBSUPERSLNS}. Let $\la_{*}>0$ be as in the proof of Proposition \ref{SUBSUPERSLNS} and let $M=M(\lambda)$, $\lambda \geq \lambda_*$, be as in \eqref{Mlambda}. Let also $\underline{u}_{\la},\overline{U}_{\la}$ be as in  Propositions \ref{SUBSPERSLNS0} and \ref{SUBSUPERSLNS}, respectively.

\medskip
Let $\varphi \in W^{1,p}_0(\Omega,a)\cap C(\overline{\Omega})$ with $\varphi \geq 0$ be arbitrary. From the definition of the function $\upsilon$ in \eqref{def:upsilonv} and the estimates in \eqref{estimaateI}, \eqref{subslnMAIN0} and  \eqref{inequalityphi1},
we find that
$$
\begin{aligned}
\int_{\Omega} a(x)|\nabla \underline{u}_{\la}|^{p-2}\nabla \underline{u}_{\la} \cdot \nabla \varphi dx \leq & \int_{\Omega_{\rho}} \la c_2 a^{-\frac{1}{p-1}}(x)\tilde{f}(0)\varphi dx \\
& \hspace{1.3cm}+ \int_{\Omega-\Omega_{\rho}} \frac{\la^{1 - \sigma(p-1)+ \sigma r}}{c_1}b(x)\upsilon^r \varphi dx
\end{aligned}
$$

Since $\tilde{f}(0)\leq 0$, we conclude that
$$
\int_{\Omega} a(x)|\nabla \underline{u}_{\la}|^{p-2}\nabla \underline{u}_{\la} \cdot \nabla \varphi dx \leq \int_{\Omega} \frac{\la^{1 - \sigma(p-1)+ \sigma r}}{c_1}b(x)\upsilon^r \varphi dx
$$
for every $\varphi \in C^{\infty}_c(\Omega)$ with $\varphi  \geq 0$ in $\Omega$.

\medskip
Therefore, 
$$
\begin{aligned}
0 \leq & \int_{\Omega}a(x)\left(|\nabla\underline{u}_{\la}|^{p-2}\nabla \underline{u}_{\la} -|\nabla \overline{U}_{\la}|^{p-2}\nabla \overline{U}_{\la}\right)\cdot \nabla (\underline{u}_{\la} - \overline{U}_{\la})^+ dx \\
\leq &  \int_{\{ \underline{u}_{\la} \geq \overline{U}_{\la} \}}b(x)\left(\frac{\la^{1-\sigma(p-1-{r})}}{c_1} \upsilon^{{r}}(x)-M^{p-1}\right)(\underline{u}_{\la}-\overline{U}_{\la})^+ dx.
\end{aligned}
$$

\medskip
Next, observe from \eqref{gluingfunct0} that the function ${\rm v}(y_N)$, $y_N\in [0,\infty)$, defined in\eqref{definitrmv}  is strictly increasing and bounded with $\|{\rm v}\|_{L^{\infty}[0,\infty)}=\phi(\rho_1)$. The latter fact together with the definition of $\upsilon$ in \eqref{def:upsilonv} imply that $\|\upsilon\|_{L^{\infty}(\Omega)}= \phi(\rho_1)$.

\medskip
We may take $\la_0\geq \la_{*}$ and for any $\la \geq \la_0$ we may choose $M=M(\la)$ larger if necessary, satisfying \eqref{Mlambda} and
$$
c_1 \la^{\sigma (p-1-{r})-1}M^{p-1}(\la)\geq \phi^r(\rho_1)=\|\upsilon\|^r_{L^{\infty}(\Omega)}.
$$ 

Consequently,
$$
\int_{\{ \underline{u}_{\la} \geq \overline{U}_{\la} \}}b(x)\left(\frac{\la^{1-\sigma(p-1-{r})}}{c_1} \upsilon^{{r}}(x)-M^{p-1}\right)(\underline{u}_{\la}-\overline{U}_{\la})^+ dx \leq 0
$$
and thus $(\underline{u}_{\la} - \overline{U}_{\la})^+=0$, i.e., $\underline{u}_{\la}\leq \overline{U}_{\la}$ in $\Omega$. This completes the proof of the proposition.
\end{proof}

\begin{proof}[Proof Theorem \ref{theo2}]
Let $\la_0$ be as in Proposition \ref{ordersubsuperslns} and let $\la \geq \la_0$ be arbitrary, but fixed. Omitting the dependence on $\la$, let $\overline{U}, \underline{u}$ be the ordered subsolution and supersolution to \eqref{DEGQUASILINEAR}, respectively, predicted in Propositions \ref{SUBSPERSLNS0}, \ref{SUBSUPERSLNS} and 
\ref{ordersubsuperslns}.

\medskip
Set $\zeta_0:= \max (\|\underline{u}\|_{L^{\infty}(\Omega)}, \|\overline{U}\|_{L^{\infty}(\Omega)})+1$ and consider the function 
\begin{equation}\label{defFTheo2}
F(x,\zeta):= 
\left\{\begin{aligned}
\la b(x)f(\zeta),& \quad |\zeta|\leq \zeta_0\\
\la b(x)f(\zeta_0),& \quad |\zeta|>\zeta_0. 
\end{aligned}
\right.
\end{equation}

With this definition of $F$, it is clear that \eqref{growthF(x,u)0} is satisfied and $\underline{u}, \overline{U}$ are a sub- and a supersolution of \eqref{MODELDEGQUASILINEAR}, respectively. 

\medskip
A direct application of Theorem \ref{multiplicitytheorem} yields the existence of $u\in W^{1,p}_0(\Omega)\cap C_0(\overline{\Omega}),$ a solution of \eqref{MODELDEGQUASILINEAR}, such that $0<\underline{u}\leq u \leq \overline{U}$ in $\Omega$. 

\medskip
From \eqref{defFTheo2}, the function $u$ solves \eqref{DEGQUASILINEAR} and this completes the proof of the theorem.
\end{proof}




\end{document}